\newtheorem{theorem}{Theorem}[section]
\newtheorem{lemma}{Lemma}[section] 
\newtheorem{proposition}{Proposition}[section] 
\newtheorem{corollary}{Corollary}[section] 
\newtheorem*{corollary*}{Corollary}
\newtheorem{theoremA}{Theorem}
\theoremstyle{definition} 
\newtheorem{definition}{Definition}[section]
\newtheorem*{definition*}{Definition}
\newtheorem{question}{Question}
\newtheorem*{question-4-prime}{Question~4$\boldsymbol{'}$}
\theoremstyle{remark} 
\newtheorem{remark}{Remark}[section]
\def\bigdiag{\operatornamewithlimits{%
  \mathchoice{\vcenter{\hbox{\Large$\Delta$}}}
             {\vcenter{\hbox{\large$\Delta$}}}
             {\Delta}
             {\Delta}}}
\def\diag{\operatornamewithlimits{%
  \mathchoice{\vcenter{\hbox{$\Delta$}}}
             {\vcenter{\hbox{$\Delta$}}}
             {\Delta}
             {\Delta}}}
\begin{document}

{
\renewcommand*{\thefootnote}{$\star$}

\title{Weird $\mathbb R$-Factorizable Groups}
\footnotetext[0]{The work of O.~Sipacheva was financially supported by the Russian Science Foundation, 
grant~22-11-00075-P.} 

}

\author{Evgenii Reznichenko}

\author{Ol'ga Sipacheva}

\email[E. Reznichenko]{erezn@inbox.ru} 

\email[O. Sipacheva]{osipa@gmail.com}

\address{Department of General Topology and Geometry, Faculty of Mechanics and  Mathematics, 
M.~V.~Lomonosov Moscow State University, Leninskie Gory 1, Moscow, 199991 Russia}

\begin{abstract}
The problem of the existence of non-pseudo-$\aleph_1$-com\-pact $\mathbb R$-factorizable groups is studied. 
It is proved that any such group is submetrizable and has weight larger than $\omega_1$. Closely related 
results concerning the $\mathbb R$-factorizability of products of topological 
groups and spaces are also obtained (a product $X\times Y$ of topological spaces is said to be  
$\mathbb R$-factorizable if any continuous function 
$X\times Y\to \mathbb R$ factors through a product of maps from $X$ and $Y$ to second-countable spaces). In 
particular, it is proved that the square $G\times G$ of a topological groups $G$ 
is $\mathbb R$-factorizable as a group  if and only if it is $\mathbb R$-factorizable as a 
product of spaces, in which case $G$ is pseudo-$\aleph_1$-compact. It is also proved that if the product 
of a space $X$ and an uncountable discrete space is $\mathbb R$-factorizable, then $X^\omega$ is heredirarily 
separable and heredirarily Lindel\"of. 
\end{abstract}

\keywords{topological group, 
$\mathbb R$-factorizable topological group, 
product of $\mathbb R$-factorizable groups,  
$\mathbb R$-factorizable product of topological spaces}

\subjclass[2020]{22A05, 54F45}

\maketitle

In the middle of the past century Pontryagin proved that any continuous function on a compact topological 
group factors through a continuous homomorphism to a second-countable group (see, e.g., 
\cite[Example~37]{Pontryagin}). This result gave rise to the theory of $\mathbb R$-factorizable groups, which 
has been fruitfully developed since then. 

\begin{definition*}[\cite{Tk1}]
A topological group $G$ is said to be \emph{$\mathbb R$-factorizable} if any continuous function 
$f\colon G\to \mathbb R$ \emph{factors through} a homomorphism to a second-countable group, i.e., there exists a 
second-countable topological group $H$, a continuous homomorphism $h\colon G \to H$, and a continuous function 
$g\colon H\to \mathbb R$ for which $f = g \circ h$. 
\end{definition*}

The notion of an $\mathbb R$-factorizable group was explicitly introduced by Tka\-chen\-ko \cite{Tk1}, 
who also obtained the first fundamental results. Among other things he proved that 
the following topological groups are $\mathbb R$-factorizable:
\begin{itemize}
\item
any Lindel\"of group;
\item
any totally bounded group, that is, a group $G$ such that, given any open neighborhood $U$ of its 
identity element, there exists a finite subset $A$ of $G$ for which $AU = G$ (totally bounded groups are 
precisely subgroups of compact groups); 
\item
any subgroup of a Lindel\"of $\Sigma$-group, in particular, any subgroup of a $\sigma$-compact group;
\item 
any dense subgroup of an arbitrary product of Lindel\"of $\Sigma$-groups.
\end{itemize}

In the decades since Tkachenko's paper \cite{Tk1} was published, the theory of $\mathbb R$-factorizable groups has been 
extensively developed; it is surveyed in Chapter~8 of the book~\cite{AT}. 

However, many problems concerning $\mathbb R$-factorizable groups remain open. We consider the four questions 
posed below (see also \cite{Tk1}) to be the most important of them. Recall that a topological space $X$ 
is said to be \emph{pseudo-$\aleph_1$-compact} if any 
locally finite (or, equivalently, any discrete) family  of open sets in $X$ is at most countable. 

\begin{question}
\label{q1}
Is any $\mathbb R$-factorizable group pseudo-$\aleph_1$-compact?
\end{question}

\begin{question}
\label{q2}
Is the image of an $\mathbb R$-factorizable group under a continuous homomorphism $\mathbb R$-factorizable?
\end{question}

Note that Question~\ref{q2} is equivalent to the question of whether the image of an $\mathbb R$-factorizable 
group under a continuous isomorphism $\mathbb R$-factorizable, because the quotients  
of $\mathbb R$-factorizable groups are $\mathbb R$-factorizable~\cite[Theorem 8.4.2]{AT}.

\begin{question}
\label{q3}
Is the square of an $\mathbb R$-factorizable group $\mathbb R$-factorizable?
\end{question}

\begin{question}
\label{q4}
Is the property of being $\mathbb R$-factorizable topological in the class of topological groups? 
In other words, is any topological group homeomorphic to an $\mathbb R$-factorizable one 
$\mathbb R$-factorizable? 
\end{question}

If $H$ is a topological group and $D$ is a discrete uncountable topological group, then 
the group  $H\times D$ is not $\mathbb R$-factorizable, because it is not $\omega$-narrow. Thus, the following 
question is of interest in relation to Question~\ref{q4}.

\begin{question-4-prime}
Is it true that no $\mathbb R$-factorizable group is homeomorphic to a product 
$H\times D$, where $H$ is a topological group and $D$ is an uncountable discrete space? 
\end{question-4-prime}

The first question is most intriguing, at least because if the answer to it is negative, then so are the answers 
to Questions~\ref{q2} and~\ref{q3}. We prove in this paper that the above questions are related as shown in the 
following diagram. An arrow $A\to B$ means that if the answer to Question~$A$ is positive, then so is the answer 
to Question~$B$.  
\[ 
\begin{tikzcd} 2 \arrow[r] & 1 \arrow[d] & 3 \arrow[l] \arrow[d] \\ 
& 4' & 4 \arrow[l] 
\end{tikzcd} 
\]

According to Theorem 8.5.2 of \cite{AT}, a topological group $G$ is $\mathbb R$-factorizable and 
pseudo-$\aleph_1$-compact if and only if it is \emph{$m$-factorizable}, that is, 
any  continuous map $f\colon G\to M$ to any metrizable space $M$ factors through a continuous 
homomorphism to a second-countable topological group. A class of $m$-factorizable groups is very important; 
see Section~8.5 of \cite{AT}. Question~\ref{q1} can be formulated as follows: Is any $\mathbb R$-factorizable 
group $m$-factorizable?

As the question of the existence of non-pseudo-$\aleph_1$-compact $\mathbb R$-factorizable groups is so important, 
we give these groups a name. 

\begin{definition*}
An $\mathbb R$-factorizable group which is not pseudo-$\aleph_1$-compact is called a \emph{weird 
$\mathbb R$-factorizable} group. 
\end{definition*}

Yet another way to state Question~\ref{q1} is: Is it true that weird $\mathbb R$-factorizable 
groups do not exist?

Weird $\mathbb R$-factorizable groups have rather abnormal properties. In this paper we obtain results which 
imply the following theorem. 

\begin{theoremA}
\label{tA}
Let $G$ be a weird $\mathbb R$-factorizable group. Then
\begin{enumerate}
\item[\rm(1)]
$G\times G$ is not $\mathbb R$-factorizable;
\item[\rm(2)]
there exists a surjective continuous homomorphism of $G$ to a non-$\mathbb R$-factorizable group;
\item[\rm(3)]
$w(G)>\omega_1$;
\item[\rm(4)]
$w(G)^\omega\geq 2^{\omega_1}$;
\item[\rm(5)]
$\psi(G)\leq\omega$, that is, $G$ is submetrizable;
\item[\rm(6)]
if $H$ is a topological group and the group $G\times H$ is $\mathbb R$-factorizable, then 
\begin{enumerate}
\item[\rm(a)]
$H^\omega$ is hereditarily Lindel\"of and hereditarily separable; 
\item[\rm(b)]
if $w(H)\leq \omega_1$, then $H$ is second-countable;
\item[\rm(c)]
under $\mathrm{CH}$, 
$H$ is second-countable. 
\end{enumerate}
\end{enumerate}
\end{theoremA}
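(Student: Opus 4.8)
\emph{Strategy.} The plan is to set up four ingredients and then read off the six items. \emph{(A) A transfer principle:} an $\mathbb R$-factorizable group factors every continuous map into a second-countable space through a continuous homomorphism onto a second-countable group (embed the target in $\mathbb R^\omega$, factor each coordinate, diagonalize into the countable — hence second-countable — product); consequently $G\times H$ is $\mathbb R$-factorizable \emph{as a topological group} iff it is $\mathbb R$-factorizable \emph{as a product of spaces}: in one direction improve the coordinate maps $G\to S$, $H\to T$ to homomorphisms onto second-countable groups and take the product homomorphism, in the other use that a homomorphism out of $G\times H$ splits as $(x,y)\mapsto h_1(x)h_2(y)$. \emph{(B) A $C$-embedding fact:} if $G$ is not pseudo-$\aleph_1$-compact, fix a discrete family $\{U_\alpha:\alpha<\omega_1\}$ of nonempty open sets with $e\notin U_\alpha$, points $x_\alpha\in U_\alpha$, and continuous $\varphi_\alpha\colon G\to[0,1]$ with $\varphi_\alpha(x_\alpha)=1$ vanishing outside $U_\alpha$; since $\{U_\alpha\}$ is locally finite, every sum $\sum_\alpha c_\alpha\varphi_\alpha(x)$ and $\sum_\alpha\varphi_\alpha(x)u_\alpha(y)$ is continuous, so the discrete set $D=\{x_\alpha:\alpha<\omega_1\}$ is $C$-embedded in $G$ and $D\times Y$ is $C$-embedded in $G\times Y$ for every space $Y$. \emph{(C) A condensation lemma:} $\mathbb R$-factorizability of a product of spaces passes to $C$-embedded subproducts, and every uncountable subset of a second-countable space contains a condensation point of itself, so testing against functions $f(x,y)=\sum_\alpha\varphi_\alpha(x)\psi_\alpha(y)$ forces strong constraints on any factorization. \emph{(D) A space-level theorem} (announced in the abstract): if $X\times D$ is $\mathbb R$-factorizable for some uncountable discrete $D$, then $X^\omega$ is hereditarily Lindel\"of and hereditarily separable.

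\emph{Items (1) and (6a).} For (1): with $\psi_\alpha=\varphi_\alpha$, the continuous function $f(x,y)=\sum_\alpha\varphi_\alpha(x)\varphi_\alpha(y)$ on $G\times G$ satisfies $f(x_\alpha,x_\beta)=\delta_{\alpha\beta}$; a factorization $f=g\circ h$ through a homomorphism onto a second-countable group would, after the split $h(x,y)=h_1(x)h_2(y)$, give $g(h_1(x_\alpha)h_2(x_\beta))=\delta_{\alpha\beta}$, and picking $\alpha_0$ with $h_1(x_{\alpha_0})$ a condensation point of $\{h_1(x_\alpha)\}$ contradicts continuity of multiplication and of $g$; hence $G\times G$ is not $\mathbb R$-factorizable. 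For (6a): by (A) and (B), $\mathbb R$-factorizability of $G\times H$ gives that of $D(\omega_1)\times H$ as a product of spaces; by (C) this says exactly that every $\omega_1$-indexed family of continuous real functions on $H$ factors through one continuous map of $H$ to a second-countable space, and (D) upgrades this to ``$H^\omega$ is hereditarily Lindel\"of and hereditarily separable''.

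\emph{Items (6b), (6c), (2), (3), (4), (5).} For (6b): $H$ hereditarily Lindel\"of is perfect, so $\psi(H)=\omega$; if also $w(H)\le\omega_1$, take $\omega_1$ continuous functions separating the points of $H$ whose cozero-sets form a base, factor them all through one map $q\colon H\to T$ to a second-countable space, and observe that $H$ then carries the $q$-initial topology, which is second countable. For (6c): under $\mathrm{CH}$, hereditary Lindel\"ofness already forces $w(H)\le 2^\omega=\omega_1$, reducing (6c) to (6b). For (2): let $h$ be the diagonal of the homomorphisms $h_\alpha\colon G\to H_\alpha$ arising from factorizations $\varphi_\alpha=g_\alpha\circ h_\alpha$, and $P=h(G)\le\prod_\alpha H_\alpha$; then $h\colon G\to P$ is a continuous surjective homomorphism and one shows $P$ is not $\mathbb R$-factorizable (the idea being that $P$ still carries all the data of the $\varphi_\alpha$, so its $\mathbb R$-factorizability would make $G$ $m$-factorizable and hence, by Theorem~8.5.2 of~\cite{AT}, pseudo-$\aleph_1$-compact). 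For (3) and (4): analyze how $\{U_\alpha\}$ can sit inside an $\mathbb R$-factorizable subgroup of a product of second-countable groups — a $\Delta$-system argument on the finite supports of basic boxes contained in the $U_\alpha$ excludes $w(G)\le\omega_1$ and yields the counting bound $w(G)^\omega\ge 2^{\omega_1}$. For (5): $\mathbb R$-factorize the continuous pseudometrics on $G$ whose unit balls about $e$ are squeezed between $\{x_\alpha\}$ and the $U_\alpha$, and collapse the resulting data to a countable family of neighborhoods of $e$ with intersection $\{e\}$.

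\emph{Main obstacle.} Items (1) and (6a)--(6c) are essentially formal once (A)--(D) are in place; the real weight is carried by the space-level theorem (D) and by the structure theory behind (2)--(5): the $\Delta$-system analysis of how an uncountable discrete family of open sets can be positioned inside a subgroup of a product of second-countable groups, and, for (2), the verification that the homomorphic image constructed there genuinely fails $\mathbb R$-factorizability rather than merely pseudo-$\aleph_1$-compactness.
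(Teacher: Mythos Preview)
Your setup (A)--(D) matches the paper's machinery (Theorems~B/C, Proposition~2.2, Proposition~2.1, and Theorem~E), and items~(1) and~(6) are handled correctly; your direct argument for~(1) via $f(x,y)=\sum_\alpha\varphi_\alpha(x)\varphi_\alpha(y)$ and a condensation point of $\{h_1(x_\alpha)\}$ is a valid alternative to the paper's route through Theorems~3.2 and~2.1. Two minor corrections: in~(A), the biconditional ``$\mathbb R$-factorizable as a group $\Leftrightarrow$ as a product'' for $G\times H$ needs both factors already $\mathbb R$-factorizable (this is exactly Theorem~3.1; whether multiplicative $\mathbb R$-factorizability alone suffices is the open Question~5), though all your uses are in that setting. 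For~(6c), bound the weight via \emph{separability} ($w\le 2^{d}$), not hereditary Lindel\"ofness. Item~(4) is not proved in the paper but cited from \cite[Theorem~8.5.8]{AT}.

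The genuine gaps are in~(3) and~(5), the paper's substantive new results, and your sketches miss their mechanisms. For~(5) the paper argues by \emph{contrapositive} (Theorem~4.3): assuming $\psi(G)>\omega$ and $G$ not pseudo-$\aleph_1$-compact, one builds a strictly decreasing $\omega_1$-chain of zero-set neighborhoods $Z_\alpha\ni e$ (uncountable pseudocharacter is what makes each step strict) and then invokes Lemma~4.1 to obtain a \emph{single} homomorphism $h$ to a second-countable group with $h^{-1}(h(x_\alpha Z_\alpha))=x_\alpha Z_\alpha$ for \emph{all} $\alpha$; translating by $x_\alpha^{-1}$, the $h(Z_\alpha)$ form a strictly decreasing $\omega_1$-chain of closed sets in a second-countable space, impossible. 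The engine of Lemma~4.1 is a coding trick: choose a countable separating family $\{Q_n\}_{n<\omega}\subset\mathcal P(\omega_1)$, use discreteness of $\{U_\alpha\}$ to make each $\sum_{\alpha\in Q_n}f_\alpha$ continuous, and $\mathbb R$-factor these \emph{countably many} functions. Your ``factorize pseudometrics and collapse'' does not reach this idea. For~(3), Theorem~4.4 is again contrapositive and \emph{combinatorial}, not a $\Delta$-system argument: embed $G$ in $\prod_{\alpha<\kappa}H_\alpha$, restrict the $\kappa$ basic boxes to the $C$-embedded discrete $Y\cong\kappa$, show (Lemma~4.2) that every subset of $\kappa$ is a countable union of these $\kappa$ traces, and then (Lemma~4.3) build recursively a subset of $\kappa$ that is not such a union. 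Your argument for~(2) also has a hole: you need $P=h(G)$ to fail pseudo-$\aleph_1$-compactness (so that $w(P)\le\omega_1$ and~(3) would block $\mathbb R$-factorizability), but $h$ is not open, the images $h(U_\alpha)$ need not be open in $P$, and the cozero sets $\tilde\varphi_\alpha^{-1}((0,1])$ need not form a discrete family there; the asserted implication ``$P$ $\mathbb R$-factorizable $\Rightarrow G$ $m$-factorizable'' is unsupported.
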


Assertion~(1) follows from Exercise~8.5.a in~\cite{AT} (see also Corollary~\ref{c4.1} in the present paper), 
and assertion~(4) follows from Theorem~8.5.8 in~\cite{AT}. The other assertions of the theorem are new; 
their proofs are given in Section~\ref{s4}.

It is seen from Theorem~\ref{tA} that the problem of the existence of weird $\mathbb R$-factorizable groups is 
closely related to the $\mathbb R$-factorizability of products of groups. 
Only recently has the fundamental question of the multiplicativity of the class of $\mathbb R$-factorizable 
groups been answered by constructing Lindel\"of (and hence $\mathbb R$-factorizable) groups $G$ and $H$ whose product 
$G\times H$ is not $\mathbb R$-factorizable \cite{Reznichenko2024rf,Sipacheva2023rf}\footnote{As shown in 
\cite{Reznichenko2024rf}, the products of Lindel\"of groups constructed in \cite{Sipacheva2023cd,Sipacheva2022pg} 
are not $\mathbb R$-factorizable either.}; moreover, one of these groups can be made second-countable. Section~\ref{s3} of the present paper is devoted to 
$\mathbb R$-factorizable products of groups. 

The key role in the study of $\mathbb R$-factorizable products of groups is played by the notion of 
an \emph{$\mathbb R$-factorizable product of spaces}.

\begin{definition*}[\cite{ReznichenkoSipacheva2013,Reznichenko2024rf}]
Given topological spaces $X_1, \dots, X_n$ and $Y$, we say that a map $f\colon X_1\times\dots\times X_n\to Y$ 
is \emph{$\mathbb R$-factorizable} if it \emph{factors through a product} of continuous maps to second-countable 
spaces, that is, if there exist 
second-countable spaces $X'_1,\dots, X'_n$ and continuous maps $g_i\colon X_i\to X'_i$, $i\le n$,
and $h\colon X'_1\times \dots \times X'_n\to Y$ such that $f=h\circ(g_1\times \dots \times g_n)$, i.e., the following diagram 
is commutative: 
\[ 
\begin{tikzcd} X_1\times\dots\times X_n \arrow[rr, "f"]  \arrow[rd, "g_1\times\dots\times g_n"']   && Y \\ 
& X'_1\times\dots\times X'_n \arrow[ru, "h"'] 
\end{tikzcd} 
\] 
We say that a product $X_1\times\dots\times X_n$ is \emph{$\mathbb R$-factorizable} (or 
\emph{multiplicatively $\mathbb R$-factorizable}, when there is a danger of confusion) 
if any continuous function $f\colon X_1\times\dots\times X_1 \to \mathbb R$ is $\mathbb R$-factorizable. 
\end{definition*} 

The notion of an $\mathbb R$-factorizable map was introduced in \cite{ReznichenkoSipacheva2013}, where 
it was essentially proved that, for the free topological group $F(S)$ of the Sorgenfrey line $S$, 
the product $F(S)\times F(S)$ is not multiplicatively $\mathbb R$-factorizable. In turn, the notion of a 
multiplicatively $\mathbb R$-factorizable product was introduced in \cite{Reznichenko2024rf}; in the same paper, the 
following statement was proved, which is the main tool for constructing topological groups whose products are not 
$\mathbb R$-factorizable (as groups). 

\begin{theoremA}[{\cite[Corollary~2(2)]{Reznichenko2024rf}}] 
\label{tB}
If the product $G\times H$ of topological groups $G$ and $H$ is an $\mathbb R$-factorizable group, then 
$G\times H$ is a multiplicatively $\mathbb R$-factorizable product.
\end{theoremA}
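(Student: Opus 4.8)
The plan is to start from the group-level factorization of an arbitrary continuous function and repackage it as a factorization through a product of maps to second-countable spaces. So let $f\colon G\times H\to\mathbb R$ be continuous. Since $G\times H$ is $\mathbb R$-factorizable as a group, there exist a second-countable topological group $K$, a continuous homomorphism $\varphi\colon G\times H\to K$, and a continuous function $g\colon K\to\mathbb R$ with $f=g\circ\varphi$.

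Next I would break $\varphi$ into its two ``coordinate'' homomorphisms. Define $\varphi_G\colon G\to K$ by $\varphi_G(x)=\varphi(x,e_H)$ and $\varphi_H\colon H\to K$ by $\varphi_H(y)=\varphi(e_G,y)$; each is continuous, being the composition of $\varphi$ with the canonical topological embedding of $G$ (resp.\ $H$) into $G\times H$. Because $(x,y)=(x,e_H)\cdot(e_G,y)$ in the group $G\times H$ and $\varphi$ is a homomorphism, we get $\varphi(x,y)=\varphi_G(x)\,\varphi_H(y)$ for all $(x,y)\in G\times H$.

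Finally I would assemble the desired diagram. Let $\mu\colon K\times K\to K$ be the multiplication of $K$; it is continuous, and $K\times K$ is second-countable. Put $h=g\circ\mu\colon K\times K\to\mathbb R$. Then for every $(x,y)\in G\times H$,
\[
f(x,y)=g\bigl(\varphi(x,y)\bigr)=g\bigl(\varphi_G(x)\,\varphi_H(y)\bigr)=h\bigl(\varphi_G(x),\varphi_H(y)\bigr),
\]
so $f=h\circ(\varphi_G\times\varphi_H)$ with $\varphi_G\colon G\to K$ and $\varphi_H\colon H\to K$ continuous, $h$ continuous, and $K$ second-countable. Hence $f$ is an $\mathbb R$-factorizable map (take $X_1'=X_2'=K$ in the definition), and since $f$ was an arbitrary continuous real-valued function on $G\times H$, the product $G\times H$ is multiplicatively $\mathbb R$-factorizable.

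I do not expect a genuine obstacle here: the argument is essentially the observation that a continuous homomorphism out of a product group is the pointwise product of its restrictions to the two factors, which commute in the target. The only points requiring (routine) care are that the embeddings $G\hookrightarrow G\times H$ and $H\hookrightarrow G\times H$ are continuous, that second-countability is preserved by finite products, and that the multiplication of $K$ is jointly continuous so that $h=g\circ\mu$ is continuous. The same proof works verbatim for a product of $n$ factors, using $\varphi(x_1,\dots,x_n)=\varphi_1(x_1)\cdots\varphi_n(x_n)$ and the $n$-fold multiplication $K^n\to K$.
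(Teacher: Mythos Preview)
Your proof is correct. The paper does not prove Theorem~B at all; it is quoted as \cite[Corollary~2(2)]{Reznichenko2024rf} and used as a black box, so there is nothing to compare against. Your argument---decomposing the homomorphism $\varphi$ on $G\times H$ as $\varphi(x,y)=\varphi_G(x)\varphi_H(y)$ via the factorization $(x,y)=(x,e_H)(e_G,y)$ and then composing with multiplication $K\times K\to K$---is precisely the natural way to establish this and is presumably what lies behind the cited corollary. One small remark: you mention that the images ``commute in the target,'' and while that is true (since $(x,e_H)$ and $(e_G,y)$ commute in $G\times H$), your proof as written never uses it, so you can drop that comment.
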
 

In this paper, we refine Theorem~\ref{tB} as follows (see Theorem~\ref{t3.1} in Section~\ref{s3}). 

\begin{theoremA}
\label{tC}
For topological groups $G$ and $H$, the following conditions are equivalent:
\begin{enumerate}
\item[\rm (1)]
the group $G\times H$ is $\mathbb R$-factorizable;
\item[\rm (2)]
$G$ and $H$ are $\mathbb R$-factorizable and the product $G\times H$ is multiplicatively $\mathbb R$-factorizable. 
\end{enumerate} 
\end{theoremA}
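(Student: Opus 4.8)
The plan is to prove the two implications of Theorem~\ref{tC} separately. The implication $(1)\Rightarrow(2)$ is essentially formal: the coordinate projections $G\times H\to G$ and $G\times H\to H$ are open continuous surjective homomorphisms, hence quotient homomorphisms, so $G$ and $H$ are quotient groups of $G\times H$ and therefore $\mathbb R$-factorizable by \cite[Theorem~8.4.2]{AT}; and the assertion that the product $G\times H$ is multiplicatively $\mathbb R$-factorizable is exactly Theorem~\ref{tB}. So all the work is in proving $(2)\Rightarrow(1)$.

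The main tool for $(2)\Rightarrow(1)$ will be the following sharpening of $\mathbb R$-factorizability, which I would either quote from \cite{AT} or prove directly: if a topological group $G$ is $\mathbb R$-factorizable and $g\colon G\to M$ is a continuous map into a separable metrizable space $M$, then $g$ factors through a continuous homomorphism onto a second-countable group, i.e. $g=g'\circ\varphi$ for some continuous homomorphism $\varphi\colon G\to K$ onto a second-countable topological group $K$ and a continuous $g'\colon K\to M$. To see this, fix an embedding $M\hookrightarrow\mathbb R^{\omega}$, write the composite $G\to\mathbb R^{\omega}$ as $(g_n)_{n\in\omega}$, factor each $g_n$ through a continuous homomorphism $\varphi_n\colon G\to K_n$ to a second-countable group $K_n$ and a continuous $q_n\colon K_n\to\mathbb R$, pass to the diagonal homomorphism $\varphi=(\varphi_n)_{n\in\omega}\colon G\to\prod_{n}K_n$ into the (again second-countable) product group, and finally replace $\prod_n K_n$ by its second-countable subgroup $\varphi(G)$: on $\varphi(G)$ the product map $(q_n)_{n\in\omega}$ takes values in $M$, so it corestricts to the required $g'$.

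Granting this, here is how I would finish $(2)\Rightarrow(1)$. Let $f\colon G\times H\to\mathbb R$ be continuous. Since $G\times H$ is a multiplicatively $\mathbb R$-factorizable product, $f=k\circ(g\times h)$ for continuous maps $g\colon G\to X'$, $h\colon H\to Y'$ into second-countable spaces --- which we may assume separable metrizable --- and a continuous $k\colon X'\times Y'\to\mathbb R$. Applying the tool above to $g$ (using that $G$ is $\mathbb R$-factorizable) and to $h$ (using that $H$ is $\mathbb R$-factorizable) gives continuous homomorphisms $\varphi\colon G\to G_1$ and $\psi\colon H\to H_1$ onto second-countable groups together with continuous maps $g_1\colon G_1\to X'$ and $h_1\colon H_1\to Y'$ such that $g=g_1\circ\varphi$ and $h=h_1\circ\psi$. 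Then $\varphi\times\psi\colon G\times H\to G_1\times H_1$ is a continuous homomorphism onto the second-countable group $G_1\times H_1$, and, since $(g_1\circ\varphi)\times(h_1\circ\psi)=(g_1\times h_1)\circ(\varphi\times\psi)$,
\[
f=k\circ(g\times h)=\bigl(k\circ(g_1\times h_1)\bigr)\circ(\varphi\times\psi),
\]
so $f$ factors through the continuous homomorphism $\varphi\times\psi$ via the continuous function $k\circ(g_1\times h_1)\colon G_1\times H_1\to\mathbb R$. As $f$ was arbitrary, $G\times H$ is $\mathbb R$-factorizable.

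The only real obstacle is the sharpening of $\mathbb R$-factorizability quoted above, and within it the point that the target must remain a \emph{group}: $\mathbb R$-factorizability only yields homomorphisms to second-countable groups for real-valued maps, so one has to assemble the countably many coordinate factorizations and then cut down to the image subgroup so that the assembled map lands back in the given metrizable space. A minor preliminary point, which I would dispose of at the start, is that the second-countable factor spaces in the definition of a multiplicatively $\mathbb R$-factorizable product may be taken to be separable metrizable; after that, everything is bookkeeping.
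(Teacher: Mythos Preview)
Your proof is correct. The implication $(1)\Rightarrow(2)$ is handled identically to the paper. For $(2)\Rightarrow(1)$, however, you take a genuinely different route. The paper argues externally: since $G$ and $H$ are $\mathbb R$-factorizable they are $\omega$-narrow, so by Guran's theorem they embed as subgroups of products $G'$, $H'$ of second-countable groups; $\mathbb R$-factorizability then gives that $G$ is $z$-embedded in $G'$ and $H$ in $H'$ \cite[Theorem~8.2.7]{AT}; multiplicative $\mathbb R$-factorizability of $G\times H$ (via Proposition~\ref{p2.1}) then yields that $G\times H$ is $z$-embedded in the $\mathbb R$-factorizable group $G'\times H'$, and another application of \cite[Theorem~8.2.7]{AT} finishes. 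Your argument is more direct and self-contained: you work with a single continuous $f$, use multiplicative $\mathbb R$-factorizability to get maps into second-countable spaces, and then upgrade those maps to homomorphisms via the elementary ``sharpening'' lemma (which is essentially \cite[Corollary~8.1.2]{AT}). Your approach avoids Guran's theorem, the $z$-embedding characterization, and Proposition~\ref{p2.1} entirely; the paper's approach, on the other hand, ties the result into the $z$-embedding framework developed in Section~\ref{s2} and makes the role of that machinery visible.
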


In \cite{EAR} the notion of $\mathbb R$-factorizability was extended from the class of topological 
groups to the much larger class of topological universal algebras. Results of \cite{EAR} 
have enabled us to prove the following statement (see Theorem~\ref{t3.2} below).

\begin{theoremA}
\label{tD}
Let $G$ be a topological group. Then the group $G\times G$ is $\mathbb R$-factorizable if and only if 
the product $G\times G$ is multiplicatively $\mathbb R$-factorizable. 
\end{theoremA}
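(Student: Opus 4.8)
The ``only if'' direction is immediate from Theorem~\ref{tB}: if the group $G\times G$ is $\mathbb R$-factorizable, then, applying that theorem with $H=G$, the product $G\times G$ is multiplicatively $\mathbb R$-factorizable. For the ``if'' direction I would first reduce the problem using Theorem~\ref{tC}. Suppose the product $G\times G$ is multiplicatively $\mathbb R$-factorizable; by Theorem~\ref{tC} applied with $H=G$, the group $G\times G$ is $\mathbb R$-factorizable as soon as $G$ itself is $\mathbb R$-factorizable. So the whole task is to show that multiplicative $\mathbb R$-factorizability of the square $G\times G$ forces $G$ to be $\mathbb R$-factorizable.

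My plan for this implication is a bootstrapping construction, which is where the universal-algebra viewpoint of \cite{EAR} enters. Fix a continuous function $f\colon G\to\mathbb R$. I want to produce a single continuous map $a\colon G\to\mathbb R^\omega$ whose image $H=a(G)$, equipped with the operations inherited from $G$, is a second-countable topological group onto which $a$ is a continuous homomorphism, and through which $f$ factors. To build $a$, I would close off a family $\mathcal C$ of continuous real-valued functions on $G$ under two operations. First, $f\in\mathcal C$. Second, whenever $c\in\mathcal C$, the map $(x,y)\mapsto c(xy)$ is a continuous function on $G\times G$, so multiplicative $\mathbb R$-factorizability yields second-countable spaces $A_c,B_c$, continuous maps $\alpha_c\colon G\to A_c$ and $\beta_c\colon G\to B_c$, and a continuous $h_c\colon A_c\times B_c\to\mathbb R$ with $c(xy)=h_c(\alpha_c(x),\beta_c(y))$; after embedding $A_c$ and $B_c$ into $\mathbb R^\omega$, I add all the coordinate functions of $\alpha_c$ and $\beta_c$ to $\mathcal C$. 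Third, I close $\mathcal C$ under composition with the inversion map $x\mapsto x^{-1}$. Every step preserves countability, so the resulting $\mathcal C$ is countable; I then set $a=(c)_{c\in\mathcal C}\colon G\to\mathbb R^{\mathcal C}\cong\mathbb R^\omega$ and $H=a(G)$.

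The verification I have in mind is then routine. For each $c\in\mathcal C$ the value $c(xy)$ is a continuous function of $\alpha_c(x)$ and $\beta_c(y)$, and these in turn are continuous functions of $a(x)$ and $a(y)$ because the coordinates of $\alpha_c$ and $\beta_c$ belong to $\mathcal C$; hence the relation $a(x)=a(x')$ is compatible with multiplication, and, since $\mathcal C$ is closed under composition with inversion, also with inversion. Thus it is a congruence of the group $G$, so $H$ carries a well-defined group structure determined by $a(x)\cdot a(y)=a(xy)$ and $a(x)^{-1}=a(x^{-1})$, and the same continuity facts show that these operations are continuous in the topology $H$ inherits from $\mathbb R^\omega$. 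Consequently $H$ is a second-countable topological group, $a\colon G\to H$ is a continuous surjective homomorphism, and since $f\in\mathcal C$ it factors as $f=\pi_f\circ a$ with $\pi_f$ the corresponding coordinate projection. Therefore $G$ is $\mathbb R$-factorizable, and Theorem~\ref{tC} finishes the argument.

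The step I expect to be the main obstacle is precisely the one that forces the closure under inversion: multiplicative $\mathbb R$-factorizability alone only delivers a continuous map of a quotient of $G$ into a second-countable \emph{space}, and one must additionally arrange that the quotient is by a \emph{normal} subgroup and that the image is a topological \emph{group}, with continuous operations. This is exactly the kind of ``close off under all the operations'' argument that the theory of $\mathbb R$-factorizable topological universal algebras in \cite{EAR} is built to carry out, and the cleanest presentation of the construction above is presumably as an application of that theory. Two secondary points worth flagging: it is essential here that multiplicative $\mathbb R$-factorizability provides \emph{continuous} connecting maps $h_c$ rather than merely pointwise factorizations, since otherwise the induced operations on $H$ need not be continuous; and one cannot avoid the detour through Theorem~\ref{tC} by running the same bootstrapping directly on $G\times G$, because that would require multiplicative $\mathbb R$-factorizability of $(G\times G)\times(G\times G)$, which is not among the hypotheses.
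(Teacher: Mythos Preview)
Your proposal is correct and follows the same route as the paper: both directions use Theorem~\ref{tB} and Theorem~\ref{tC} exactly as you do, and the remaining step---that multiplicative $\mathbb R$-factorizability of $G\times G$ forces $G$ to be $\mathbb R$-factorizable---is precisely Corollary~\ref{c3.1}, which the paper derives from Theorem~12 of \cite{EAR}. Your explicit bootstrapping construction is a faithful unpacking of that citation, so there is no substantive difference in approach.
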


\begin{corollary*}
Given a topological group $G$, the $\mathbb R$-factorizability of the group $G\times G$ is a topological 
property of $G$. In other words, if the group $G\times G$ is $\mathbb R$-factorizable and  
$H$ is a topological group homeomorphic to $G$, then the group $H\times H$ is $\mathbb R$-factorizable. 
\end{corollary*}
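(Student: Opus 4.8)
The plan is to deduce the corollary immediately from Theorem~\ref{tD}, the point being that the second condition in that theorem---multiplicative $\mathbb R$-factorizability of the product $G\times G$---refers only to the underlying topological spaces and not at all to the group operations. First I would record the elementary fact that multiplicative $\mathbb R$-factorizability of a product $X\times Y$ is invariant under replacing $X$ and $Y$ by homeomorphic spaces. Indeed, if $\varphi\colon X'\to X$ and $\psi\colon Y'\to Y$ are homeomorphisms and the product $X\times Y$ is multiplicatively $\mathbb R$-factorizable, then given any continuous $f\colon X'\times Y'\to\mathbb R$ the function $f\circ(\varphi^{-1}\times\psi^{-1})\colon X\times Y\to\mathbb R$ factors as $h\circ(g_1\times g_2)$ through continuous maps $g_1\colon X\to X_1'$, $g_2\colon Y\to Y_1'$ into second-countable spaces; then $f=h\circ\bigl((g_1\circ\varphi)\times(g_2\circ\psi)\bigr)$, and $g_1\circ\varphi\colon X'\to X_1'$ and $g_2\circ\psi\colon Y'\to Y_1'$ are continuous into the same second-countable spaces, so $f$ is $\mathbb R$-factorizable.

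Now suppose the group $G\times G$ is $\mathbb R$-factorizable and $H$ is a topological group homeomorphic to $G$ via a homeomorphism $\varphi\colon H\to G$. By Theorem~\ref{tD}, the product $G\times G$ is multiplicatively $\mathbb R$-factorizable. Applying the observation above with $X=Y=G$, $X'=Y'=H$, and both homeomorphisms taken to be $\varphi$, we conclude that the product $H\times H$ is multiplicatively $\mathbb R$-factorizable. Since $H$ is a topological group, the reverse implication of Theorem~\ref{tD} then gives that the group $H\times H$ is $\mathbb R$-factorizable, which is the assertion.

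There is essentially no obstacle in this argument; all of the substance lives in Theorem~\ref{tD}, whose whole purpose is to trade the group-theoretic notion of $\mathbb R$-factorizability of $G\times G$ for the purely topological notion of multiplicative $\mathbb R$-factorizability of the product $G\times G$. The only point that requires any attention is that the ``if'' direction of Theorem~\ref{tD} must be applied to a topological group, which is exactly our hypothesis on $H$; no compatibility between $\varphi$ and the group structures of $G$ and $H$ is needed or used.
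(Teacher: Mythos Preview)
Your argument is correct and is precisely the intended one: the paper presents this corollary as an immediate consequence of Theorem~\ref{tD} (restated and proved as Theorem~\ref{t3.2}), and you have spelled out that deduction carefully, including the trivial but necessary observation that multiplicative $\mathbb R$-factorizability of a product depends only on the underlying spaces. Note that when the paper restates this corollary at the end of Section~\ref{s3} and attaches a short proof, that proof is evidently garbled---it concludes only that $G$ is $\mathbb R$-factorizable and never mentions $H$---so your write-up is in fact cleaner than what appears there.
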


This corollary shows why a positive answer to Question~\ref{q3} gives a positive answer to Question~\ref{q4}. 

In \cite{Blair-Hager} Blair and Hager considered conditions under which a product $X\times Y$ is 
$z$-embedded in $\beta X\times \beta Y$. We will show in Section~\ref{s2}, which is devoted to 
multiplicatively $\mathbb R$-factorizable products, that a product $X\times Y$ has this property if and only if it is 
$\mathbb R$-factorizable (see Proposition~\ref{p2.1}). In the same section we also prove the following 
statement (this is Theorem~\ref{t2.1}).

\begin{theoremA}
\label{tE}
Suppose that a product $X\times Y$ is $\mathbb R$-factorizable and $Y$ is not 
pseudo-$\aleph_1$-compact. Then 
\begin{enumerate}
\item[\rm(1)]
$X^\omega$ is hereditarily Lindel\"of and hereditarily separable; 
\item[\rm(2)] if $w(X)\leq \omega_1$, then $X$ is second-countable; 
\item[\rm(3)] under \textup{CH}, $X$ is second-countable. 
\end{enumerate} 
\end{theoremA}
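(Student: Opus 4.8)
The plan is to exploit the hypothesis that $Y$ fails to be pseudo-$\aleph_1$-compact by extracting from it a large discrete family of open sets, which in turn yields a closed discrete subspace of $Y$ of size $\omega_1$. The strategy is then to reduce the situation to the one treated by Proposition~\ref{p2.1}: if $X\times Y$ is $\mathbb{R}$-factorizable, then $X\times Y$ is $z$-embedded in $\beta X\times\beta Y$, and restricting attention to $X\times E$ for a suitable copy $E\subseteq Y$ of a discrete space of cardinality $\omega_1$ should force strong countability-type restrictions on $X$. More precisely, I would first argue that $\mathbb{R}$-factorizability of $X\times Y$ passes to $X\times E$ (a continuous function on $X\times E$ extends, using normality/$z$-embedding, or one works directly with the definition and the retraction-like behavior of the discrete factor), so we may assume $Y=D$ is discrete of size $\omega_1$. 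Each "slice" $X\times\{d\}$ is a copy of $X$, and the $\mathbb{R}$-factorization of a function built to separate different slices pins down second-countable quotients $g\colon X\to X'$ through which ``many'' functions on $X$ must factor simultaneously.

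The core of the argument, and the part I expect to be the main obstacle, is turning ``$X\times D$ is $\mathbb{R}$-factorizable with $|D|=\omega_1$'' into ``$X^\omega$ is hereditarily Lindel\"of and hereditarily separable.'' The natural route is a Noetherian/counting argument: suppose toward a contradiction that $X^\omega$ contains an uncountable discrete (or left-separated / right-separated) subspace, witnessed by points $x_\alpha$ and open sets $U_\alpha$, $\alpha<\omega_1$; then build a continuous function $f\colon X\times D\to\mathbb{R}$ that ``codes'' the $\alpha$-th bad configuration on the $\alpha$-th slice $X\times\{d_\alpha\}$, using only finitely (or countably) many coordinates of $X^\omega$ at a time so that $f$ is genuinely continuous on $X\times D$ (continuity on $X\times D$ is automatic once it is continuous on each slice, since $D$ is discrete). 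An $\mathbb{R}$-factorization $f=h\circ(g\times\mathrm{id})$ through a second-countable $X'$ then collapses the uncountably many distinct behaviors into a second-countable image, contradicting the assumed uncountable discrete family. The delicate point is arranging the coding so that a single second-countable quotient of $X$ cannot simultaneously support all $\omega_1$ slice-functions; this is where one must be careful about whether it is hereditary separability, hereditary Lindel\"ofness, or both that is obstructed, and about passing from $X$ to $X^\omega$ (one applies the argument to $X^n$ for each $n$ and uses that a countable product of second-countable spaces is second-countable, plus the standard fact that $X^\omega$ is hereditarily separable/Lindel\"of iff every finite power is).

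Once part~(1) is in hand, parts~(2) and~(3) should follow by fairly standard set-theoretic topology. For~(2): if $w(X)\le\omega_1$, fix a base $\{B_\alpha:\alpha<\omega_1\}$; hereditary Lindel\"ofness (from~(1)) means $X$ has countable spread, hence countable extent, and combined with $w(X)\le\omega_1$ and hereditary separability one squeezes $X$ to be second-countable—concretely, hereditary separability of $X$ (indeed of all finite powers, from $X^\omega$) gives that $X$ has a countable network or countable $\pi$-weight in a way that, together with a base of size $\le\omega_1$, forces $w(X)\le\omega$; alternatively, HL implies every subspace has a countable base from below in any fixed $\omega_1$-sized base, and an elementary submodel / closing-off argument of size $\omega_1$ versus $\omega$ does it. For~(3): under CH, $2^\omega=\omega_1$, and a hereditarily separable regular (in fact Tychonoff) space has cardinality $\le 2^\omega=\omega_1$, so $w(X)\le |X|^{?}$—more to the point, a hereditarily Lindel\"of Tychonoff space has a base of size $\le 2^\omega=\omega_1$ (since $|X|\le 2^\omega$ and one can take basic open sets to be cozero sets, of which there are at most $2^{|X|}$... this needs the sharper bound $nw(X)\le 2^\omega$, true for HL spaces), reducing~(3) to~(2). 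I would cite the relevant cardinal-function inequalities (e.g. from Juh\'asz or Engelking) rather than reprove them, so that the genuine content of the theorem remains concentrated in the coding construction for part~(1).
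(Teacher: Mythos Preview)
Your reduction to $Y=D(\omega_1)$ is correct and matches the paper's Proposition~2.2, but there are genuine gaps in~(1) and~(2).  For~(1), you propose to code a bad configuration $(x_\alpha,U_\alpha)$ lying in $X^\omega$ on the $\alpha$-th slice of $f\colon X\times D\to\mathbb{R}$, but each slice is a function on $X$ and cannot see points or open sets of $X^\omega$; ``using finitely many coordinates of $X^\omega$'' does not help, since the domain is still $X$, not any power of it.  You then say ``apply the argument to $X^n$ for each $n$'' without explaining how a factorization through some $g\colon X\to X'$ constrains $X^n$.  (This \emph{can} be made to work---pigeonhole the basic open witnesses to a fixed finite support $n$, let the slice functions witness the individual cozero factors in $X$, and observe that $g^{\times n}$ then pushes the uncountable separated family into the second-countable $(X')^n$---but you do not supply this step.)  The paper takes a different and more elaborate route: via a transfinite ``strong zero set'' construction (Lemma~2.1 and Proposition~2.5) it shows that $X^\omega\times D(\omega_1)$ is itself $\mathbb{R}$-factorizable, and only then runs the right-/left-separated argument directly with $X^\omega$ in the role of~$X$.

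For~(2), you try to derive second-countability from~(1) together with $w(X)\le\omega_1$ via cardinal-function inequalities, but none of your sketched routes is valid: hereditary separability of all finite powers does not yield a countable network, and HL together with an $\omega_1$-sized base does not by itself force countable weight.  The paper's proof of~(2) is direct and completely independent of~(1): by its Proposition~2.3(3), the $\mathbb{R}$-factorizability of $X\times D(\omega_1)$ means that every continuous map from $X$ into a space of weight $\le\omega_1$ factors through a second-countable space; applying this to $\mathrm{id}_X$ gives $\mathrm{id}_X=h\circ g$ with $g\colon X\to M$ second-countable, so $g$ is a homeomorphism and $X$ is second-countable (Proposition~2.4).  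Your reduction of~(3) to~(2) via $w(X)\le 2^\omega$ is fine and essentially what the paper does.
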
 

This theorem strengthens Proposition~2.1(b) of~\cite{Blair-Hager}. 

The CH assumption cannot be omitted from (3), because it has recently been shown by the authors 
jointly with Anton Lipin that if $\mathfrak b>\omega_1$, then the product of the countable 
Fr\'echet--Urysohn fan and a discrete space of cardinality $\omega_1$ is $\mathbb R$-factorizable. 

\section{Preliminaries} 

All topological spaces and groups considered in this paper are assumed to be Tychonoff. 
Throughout the paper, by a space we mean a topological space and use $I$ to denote an arbitrary index set. Given 
ordinals $\alpha$ and $\beta$, by $[\alpha,\beta]$ we denote the set of all ordinals $\gamma$ satisfying 
the inequalities $\alpha\le \gamma\le \beta$. The weight of a space $X$ is denoted by~$w(X)$ and its 
pseudocharacter (that is, the least cardinal $\kappa$ such that every point of $X$ is the intersection of 
at most $\kappa$-many neighborhoods), by $\psi(X)$. By a \emph{Lindel\"of $\Sigma$-group} we mean a topological 
group whose underlying topological space is a Lindel\"of $\Sigma$-space, that is, can be represented as a 
continuous image of a perfect preimage of a second-countable space. A topological space $X$ is 
said to be \emph{perfectly $\varkappa$-normal} if the closure of any open set in $X$ is a zero set. 

\begin{definition}
Given a cardinal $\kappa$, a topological space $X$ is said to be \emph{pseudo-$\kappa$-compact} if the 
cardinality of any locally finite family of open sets in $X$ is less than~$\kappa$. 
\end{definition}

\begin{remark} 
It is easy to see that a Tychonoff space $X$ is pseudo-$\kappa$-compact if and only if the cardinality of 
any discrete family of cozero sets in $X$ is less than~$\kappa$. 

Indeed, suppose that there exists a locally finite family $\{U_\alpha: \alpha\in \kappa\}$ 
of nonempty open sets in $X$. Let us show that there exists a discrete family of cozero sets which has 
cardinality $\kappa$. Choose points $x_\alpha\in X$ and cozero sets $V_\alpha\subset X$ so that $x_\alpha\in 
V_\alpha\subset U_\alpha$ and $V_\alpha$ intersects only finitely many sets $U_\beta$ for each $\alpha< \kappa$. 
Let $f\colon \kappa\to \kappa$ be a function such that $V_\alpha\cap V_\beta=\varnothing$ for all $\beta\ge 
f(\alpha)$. Clearly, $f$ is increasing and the family $\{V_{f(\alpha)}:\alpha\in \kappa\}$ is a locally finite 
disjoint (and hence discrete) family of cozero sets. 
\end{remark}

\begin{definition}
A topological group $G$ is said to be \emph{$\omega$-narrow} if, for every open neighborhood $V$ of the
identity element in $G$, there exists a countable set $A \subset G$ such that $V\!A = G$ (or, equivalently, such 
that $AV=G$). 
\end{definition}

According to \cite[Proposition~8.1.3]{AT}, every $\mathbb R$-factorizable group is $\omega$-narrow.

\begin{definition}
A subspace $Y$ of a topological space $X$ is said to be \emph{$z$-embedded} in $X$ if, for every zero set $Z$ 
in $Y$, there exists a zero set $F$ in $X$ such that $F \cap Y = Z$. 
\end{definition}

Clearly, ``zero'' in this definition can be replaced by ``cozero.'' It is also clear that any $z$-embedded 
subspace $Z$ of a $z$-embedded subspace $Y$ of a space $X$ is $z$-embedded in~$X$. 

The property of being $z$-embedded plays a crucial role in the theory of $\mathbb R$-factorizable groups, because 
\emph{an $\omega$-narrow topological group $G$ is $\mathbb R$-factorizable if and only if $G$ is $z$-embedded in every 
topological group that contains $G$ as a topological subgroup}~\cite[Theorem~8.2.7]{AT}.

Recall that a subspace $Y$ of a space $X$ is \emph{$C$-embedded} ($C^*$-embedded) in $X$ if 
any continuous (any bounded continuous) function $f\colon Y\to \mathbb R$ has a continuous extension to $X$. 
Obviously, all $C$- and $C^*$-embedded subspaces of $X$ are $z$-embedded. It is also known that a zero set 
$Y$ of $X$ is $z$-embedded in $X$ if and only if $Y$ is $C^*$-embedded in $X$ \cite[Corollary~11.7]{Ch}. 
On the other hand, any cozero set of $X$ is $z$-embedded in $X$ \cite[Lemma~11.12]{Ch}. 

\begin{remark}
The union of any discrete family of cozero sets in a space $X$ is $z$-embedded in~$X$. 

Indeed, let 
 $\mathscr U=\{U_\iota:\iota \in I\}$ be a discrete family of cozero sets, and let $f_\iota\colon X\to [0,1]$ 
witness their being cozero. The function 
$$
f=\sum_{\iota\in I}f_\iota\colon X\to \mathbb R
$$ 
is well defined (because 
$\mathscr U$ is disjoint) and continuous (because $\mathscr U$ is discrete and hence locally finite). Clearly, 
$f^{-1}(\{0\}) = X\setminus \bigcup \mathscr U$. Thus, $\bigcup \mathscr U$ is a cozero set in $X$; therefore, it is 
$z$-embedded.  
\end{remark}

\begin{remark}
\label{r1.3}
Let $\mathscr U=\{U_\iota:\iota \in I\}$ be a discrete family of nonempty cozero sets in a space $X$, and let 
$y_\iota\in U_\iota$ for each $\iota\in I$. Then the (discrete) set $Y=\{y_\iota:\iota\in I\}$ is $C$-embedded in 
$X$.

Indeed, let $f_\iota\colon X\to \mathbb R$ be continuous functions witnessing that the $U_\iota$ are cozero, and let 
$g\colon Y\to \mathbb R$ be any function. The function 
$$
f=\sum_{\iota\in I}\frac{g(y_\iota)}{f_\iota(y_\iota)}\cdot f_\iota\colon X\to \mathbb R
$$ 
is well defined and continuous, and its restriction to $Y$ coincides with~$g$.
\end{remark}

Recall that a topological space is \emph{submetrizable} if it admits a coarser metrizable topology. In this paper 
we repeatedly use the following well-known theorems. 

\begin{theorem}[{see, e.g., \cite[Theorem~3.3.16]{AT}}]
\label{t1.1}
A topological group is submetrizable if and only if it has countable pseudocharacter.
\end{theorem}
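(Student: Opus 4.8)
The plan is to prove the two implications separately; the forward direction is immediate, while the converse is the classical Birkhoff--Kakutani-type construction of an invariant pseudometric, adapted to the hypothesis $\psi(G)\le\omega$.

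First suppose $G$ admits a coarser metrizable topology, induced by a metric $d$. For each $x\in G$ the balls $\{y:d(x,y)<1/n\}$, $n\in\omega$, are open in the coarser topology, hence open in $G$, and their intersection is $\{x\}$; therefore $\psi(G)\le\omega$.

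Conversely, assume $\psi(G)\le\omega$ and fix open neighbourhoods $U_n\ni e$ ($n\in\omega$) with $\bigcap_{n}U_n=\{e\}$. Using continuity of multiplication and inversion, recursively choose open symmetric neighbourhoods $V_n$ of $e$ with $V_0=G$, $V_{n+1}V_{n+1}V_{n+1}\subseteq V_n$, and $V_{n+1}\subseteq V_n\cap U_{n+1}$; then $\bigcap_n V_n\subseteq\bigcap_n U_n=\{e\}$. The standard construction of an invariant pseudometric from such a decreasing chain of neighbourhoods yields a continuous left-invariant pseudometric $\rho$ on $G$ for which $\{x:\rho(e,x)<2^{-n}\}\subseteq V_n$ for every $n$.

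Two observations finish the proof. Since $\rho(e,x)=0$ implies $x\in\bigcap_n V_n=\{e\}$ and, by left-invariance, $\rho(x,y)=\rho(e,x^{-1}y)$, the pseudometric $\rho$ is in fact a metric. And since $\rho\colon G\times G\to\mathbb R$ is continuous, every $\rho$-ball is open in $G$, so the metric topology generated by $\rho$ is coarser than the topology of $G$; hence $G$ is submetrizable. The only step that is not pure bookkeeping with the neighbourhoods $V_n$ is the passage from the chain $(V_n)$ to the pseudometric $\rho$, which is the classical Birkhoff--Kakutani lemma, so there is no real obstacle here.
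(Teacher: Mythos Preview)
Your argument is correct and is the standard Birkhoff--Kakutani route to this result. Note, however, that the paper does not supply its own proof of Theorem~\ref{t1.1}: the statement is quoted as a known fact with a reference to \cite[Theorem~3.3.16]{AT}, so there is no in-paper proof to compare against. Your write-up would serve perfectly well as the omitted proof; the only cosmetic point is the indexing (with $V_0=G$ and $V_{n+1}\subseteq U_{n+1}$ you get $\bigcap_n V_n\subseteq\bigcap_{n\ge 1}U_n$, which need not literally equal $\{e\}$ unless you also arrange $V_1\subseteq U_0$ or simply assume $U_0=G$), but this is trivially repaired and not a genuine gap.
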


\begin{theorem}[{\cite[Theorem~8.4.2]{AT}}]
\label{t1.2}
The image of an $\mathbb R$-factorizable group under a quotient \textup($=$\,open\textup) homomorphism is 
$\mathbb R$-factorizable. \end{theorem}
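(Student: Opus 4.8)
The plan is to push a factorization from $G$ down to the quotient along the quotient map $\pi\colon G\to K$; write $N=\ker\pi$, so that $K\cong G/N$. First I would record that $K$ is $\omega$-narrow: given a neighborhood $W$ of the identity in $K$, pick a countable $A\subset G$ with $A\cdot\pi^{-1}(W)=G$ (possible because an $\mathbb R$-factorizable group is $\omega$-narrow), and then $\pi(A)$ is countable with $\pi(A)\cdot W=K$. Now fix a continuous $f\colon K\to\mathbb R$. Then $f\circ\pi\colon G\to\mathbb R$ is continuous, so $\mathbb R$-factorizability of $G$ yields a second-countable group $H$, a continuous homomorphism $h\colon G\to H$, and a continuous $g\colon H\to\mathbb R$ with $f\circ\pi=g\circ h$; replacing $H$ by $\overline{h(G)}$ I may assume that $h(G)$ is dense in $H$.

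Next I would pass to a suitable second-countable quotient of $H$. Put $M=\overline{h(N)}$. Since $N$ is normal in $G$, the subgroup $h(N)$ is normal in $h(G)$, hence $M$ is a closed normal subgroup of $H=\overline{h(G)}$; the quotient $H/M$ is a first-countable topological group (the open quotient map $q\colon H\to H/M$ carries a countable base at $e_H$ to one at the identity), hence metrizable, and it is separable, so $H/M$ is second-countable. As $h(N)\subset M$, the homomorphism $q\circ h$ annihilates $N$ and therefore factors as $\bar h\circ\pi$ for a continuous homomorphism $\bar h\colon K\to H/M$; its image $L:=q(h(G))$ is a second-countable group, being a subgroup of the second-countable group $H/M$, and $\bar h$ maps $K$ onto $L$.

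Finally I would descend $g$ to $L$. For $x\in G$ and $n\in N$ one has $g(h(x)h(n))=(g\circ h)(xn)=(f\circ\pi)(xn)=f(\pi(x))$, so $g$ is constant on $h(x)\,h(N)$, and by continuity constant on $\overline{h(x)\,h(N)}=h(x)M$. Every coset of $M$ contained in $h(G)M$ is of this form, so $g$ is constant on each such coset, which yields a function $\bar g\colon L\to\mathbb R$ with $\bar g\circ q=g$ on $h(G)M$; since the restriction $q|_{h(G)M}$ is an open continuous surjection onto $L$ ($h(G)M$ is saturated with respect to $q$) and $g|_{h(G)M}$ is continuous and constant on its fibers, $\bar g$ is continuous. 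Then $\bar g(\bar h(\pi(x)))=\bar g(q(h(x)))=g(h(x))=f(\pi(x))$ for all $x\in G$, and surjectivity of $\pi$ gives $\bar g\circ\bar h=f$. Hence $f$ factors through the continuous homomorphism $\bar h$ of $K$ onto the second-countable group $L$, so $K$ is $\mathbb R$-factorizable.

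The step I expect to require the most care is the last one: one cannot simply hope that $g$ drops to a function on all of $H/M$, because the dense subgroup $h(G)M/M$ need not exhaust $H/M$, so the descent must be carried out on $L$ itself, and one must verify that the subspace topology on $L$ and the quotient topology induced by $q|_{h(G)M}$ agree well enough for $\bar g$ to come out continuous. An alternative that bypasses this bookkeeping is to use the characterization of $\mathbb R$-factorizability for $\omega$-narrow groups via $z$-embeddability into every larger topological group together with the behavior of $z$-embedded subgroups under open homomorphisms, but this does not appear to be any shorter.
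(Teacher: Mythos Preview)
The paper does not supply its own proof of this statement: Theorem~\ref{t1.2} is simply quoted from \cite[Theorem~8.4.2]{AT} as a known result used later in the paper. So there is no argument in the paper to compare your proposal against.

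That said, your argument is correct and is essentially the standard proof one finds in the literature. The only step that deserves a word of justification is exactly the one you flagged: that $q|_{h(G)M}\colon h(G)M\to L$ is open onto $L$ with the subspace topology. This follows because $q$ is open and $h(G)M$ is $q$-saturated, so for any open $V\subset H$ one has $q(V\cap h(G)M)=q(V)\cap L$; hence $q|_{h(G)M}$ is a quotient map and $\bar g$ is continuous. Your verification that $\overline{h(x)h(N)}=h(x)M$ (via the homeomorphism given by left translation) and that $\bar h$ is continuous (because $\pi$ is a quotient map) are also in order. The preliminary remark that $K$ is $\omega$-narrow is not actually needed for the argument you give, since you produce the factorization directly rather than appealing to the $z$-embedding characterization; you may drop it or keep it as context.
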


\section{$\mathbb R$-Factorizable Products} 
\label{s2}

\begin{definition}
A \emph{cozero rectangle} in a product $X\times Y$ of topological spaces $X$ and $Y$ is any set of the form 
$V\times W$, where $V$ and $W$ are cozero sets in $X$ and $Y$, respectively. 
\end{definition} 

\begin{proposition}
\label{p2.1}
For any spaces $X$ and $Y$, the following conditions are equivalent: 
\begin{enumerate}
\item[\rm (1)]
$X\times Y$ is $\mathbb R$-factorizable;
\item[\rm (2)]
any cozero set in $X\times Y$ is a countable union of cozero rectangles;
\item[\rm (3)]
$X\times Y$ is $z$-embedded in $\beta X\times \beta Y$;
\item[\rm (4)]
if $X$ is $z$-embedded in $X'$ and $Y$ is $z$-embedded in $Y'$, then $X\times Y$ is $z$-embedded in $X'\times Y'$;
\item[\rm (5)]
there exist spaces $X'$ and $Y'$ such that $X'\times Y'$ is $\mathbb R$-factorizable and 
$X\times Y$ is $z$-embedded in $X'\times Y'$; 
\item[\rm(6)] there exist spaces $X'$ and $Y'$ such that 
$X'\times Y'$ is Lindel\"of and $X\times Y$ is $z$-embedded in $X'\times Y'$. 
\end{enumerate} 
\end{proposition}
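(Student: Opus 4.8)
The plan is to prove the cycle of implications $(1)\Rightarrow(2)\Rightarrow(3)\Rightarrow(5)\Rightarrow(1)$ together with the auxiliary equivalences $(3)\Leftrightarrow(4)$, $(5)\Leftrightarrow(6)$, routing everything through the characterization of $\mathbb R$-factorizable maps by countable unions of cozero rectangles and through the known fact \cite[Theorem~8.2.7]{AT} that $z$-embeddedness is the right ``external'' reformulation. First I would record the elementary observation that a continuous function $f\colon X\times Y\to\mathbb R$ is $\mathbb R$-factorizable (in the sense of the definition above, through maps $g_1\colon X\to X'$, $g_2\colon Y\to Y'$ to second-countable spaces) if and only if $f$ depends, up to the diagonal of $g_1\times g_2$, on countably many coordinates of a fixed structure — concretely, iff $f$ is a uniform limit of functions built from countably many cozero sets of $X$ and countably many cozero sets of $Y$. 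From this the cozero-set formulation falls out: a cozero set $U=h^{-1}(\mathbb R\setminus\{0\})$ of $X\times Y$ is the cozero set of an $\mathbb R$-factorizable function iff $U$ is a countable union of cozero rectangles, since the cozero sets of a second-countable space $X'$ form a countable base of cozero sets and preimages under $g_1\times g_2$ of basic cozero rectangles of $X'\times Y'$ are cozero rectangles of $X\times Y$. This gives $(1)\Leftrightarrow(2)$.

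Next, for $(2)\Rightarrow(3)$: each cozero rectangle $V\times W$ in $X\times Y$ extends to the cozero rectangle $\overline V^{\,*}\times\overline W^{\,*}$ in $\beta X\times\beta Y$ whose trace on $X\times Y$ is again $V\times W$, because $X$ is $C^*$-embedded hence $z$-embedded in $\beta X$ (and likewise $Y$). A countable union of such extended cozero rectangles is a cozero set in $\beta X\times\beta Y$ whose trace is the original cozero set of $X\times Y$; since cozero sets of $X\times Y$ are exactly the zero sets of its $\mathbb R$-valued functions read the other way, and every cozero set of $X\times Y$ arises this way by $(2)$, we get that $X\times Y$ is $z$-embedded in $\beta X\times\beta Y$. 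Conversely, for $(3)\Rightarrow(1)$ (or more cleanly $(3)\Rightarrow(2)$): a cozero set of $X\times Y$ extends to a cozero set $\widetilde U$ of $\beta X\times\beta Y$; now $\beta X\times\beta Y$ need not be Lindel\"of, so one cannot immediately write $\widetilde U$ as a countable union of basic rectangles — and this is exactly the place where the argument has to be handled with care. The device is that $\widetilde U$ is a cozero set, hence Lindel\"of-in-the-relative-cozero-topology in the appropriate sense: $\widetilde U = \bigcup_n Z_n$ with each $Z_n$ a zero set of $\beta X\times\beta Y$ sitting inside $\widetilde U$, and a zero set of $\beta X\times\beta Y$ is compact, so can be covered by finitely many cozero rectangles contained in $\widetilde U$; countably many such finite covers give $\widetilde U$ as a countable union of cozero rectangles of $\beta X\times\beta Y$, whose traces are cozero rectangles of $X\times Y$ covering the original cozero set. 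Thus $(3)\Rightarrow(2)$, closing the first loop.

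For the remaining equivalences: $(3)\Rightarrow(4)$ because $z$-embedding is transitive and, given $X\hookrightarrow X'$ and $Y\hookrightarrow Y'$ each $z$-embedded, one embeds further $X'\hookrightarrow\beta X'$, $Y'\hookrightarrow\beta Y'$, notes $\beta X=\beta X'$ and $\beta Y=\beta Y'$ (Stone--\v Cech of a $z$-embedded — indeed dense after closure — subspace; one has to be mildly careful and pass to closures, or simply use that $X$ is $z$-embedded in $\beta X'$ via the composite and that $X\times Y$ being $z$-embedded in $\beta X\times\beta Y$ together with $X'\times Y'$ lying between $X\times Y$ and $\beta X'\times\beta Y'$ forces $X\times Y$ $z$-embedded in $X'\times Y'$ by the sandwich/transitivity remark recorded before Proposition~\ref{p2.1}). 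The implications $(4)\Rightarrow(3)$ and $(3)\Rightarrow(5)$ are trivial ($\beta X\times\beta Y$ is $\mathbb R$-factorizable since it is a product of compact — hence, by Pontryagin-type reasoning or simply because compacta are Lindel\"of and $z$-embedded in themselves — $\mathbb R$-factorizable spaces, or one just cites that a product of two compact spaces is Lindel\"of, giving $(6)$ and hence $(5)$). Then $(5)\Rightarrow(1)$: if $X'\times Y'$ is $\mathbb R$-factorizable and $X\times Y$ is $z$-embedded in it, pull back a factorization of any $f\colon X\times Y\to\mathbb R$ — extend $f$ to (a cozero-compatible family approximating) a function on $X'\times Y'$ using $z$-embeddedness to transfer cozero sets, apply $(1)\Leftrightarrow(2)$ on $X'\times Y'$ to write the relevant cozero sets as countable unions of cozero rectangles of $X'\times Y'$, and restrict, obtaining that each cozero set of $X\times Y$ is a countable union of cozero rectangles, i.e. $(2)$ for $X\times Y$. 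Finally $(6)\Rightarrow(5)$ is immediate since Lindel\"of products are $\mathbb R$-factorizable (Lindel\"of spaces, and their products when Lindel\"of, are $\mathbb R$-factorizable as products — alternatively, in a Lindel\"of space every cozero set is a countable union of any prescribed basic cozero sets covering it, in particular of cozero rectangles), and $(3)\Rightarrow(6)$ uses that $\beta X\times\beta Y$ is compact, hence Lindel\"of. The main obstacle, as flagged, is the passage $(3)\Rightarrow(2)$: extracting a \emph{countable} family of cozero rectangles from a cozero set of the non-Lindel\"of space $\beta X\times\beta Y$; the resolution is to exploit that a cozero set is an $F_\sigma$ of zero sets and that zero sets of a compact space are compact, so finite subcovers assemble into the desired countable family.
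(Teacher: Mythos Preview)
Your overall structure matches the paper's, and your direct argument for $(3)\Rightarrow(2)$ (writing the extended cozero set as an $F_\sigma$ of compact zero sets, each covered by finitely many cozero rectangles) is correct and essentially unpacks what the paper obtains by citing Blair--Hager. Two points, however, need repair.

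First, your $(2)\Rightarrow(1)$ is only asserted, not argued. You claim a function is $\mathbb R$-factorizable iff it is a uniform limit of functions built from countably many cozero sets of $X$ and of $Y$, but you do not explain how, from the hypothesis that every $f^{-1}(U_n)$ is a countable union of cozero rectangles, one actually produces maps $g_1,g_2$ to second-countable spaces through which $f$ factors. The paper does this explicitly: it takes witness functions $g_{n,i},h_{n,i}$ for all rectangles appearing in the decompositions of the $f^{-1}(U_n)$ (with $U_n$ ranging over a countable base of $\mathbb R$), forms the diagonals into $\mathbb R^{\omega\times\omega}$, and checks that $f$ descends.

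Second, and more seriously, your $(3)\Rightarrow(4)$ does not work as written. The claim $\beta X=\beta X'$ for $X$ merely $z$-embedded in $X'$ is false (take $X$ a single point in $X'=[0,1]$). Your fallback, arguing that $X\times Y$ is $z$-embedded in $\beta X'\times\beta Y'$ and then sandwiching, is circular: establishing that $X\times Y$ is $z$-embedded in $\beta X'\times\beta Y'$ is itself an instance of (4), and (3) gives you information only about $\beta X\times\beta Y$, a different space. The clean fix is to go through (2), which you have already established: if every cozero set in $X\times Y$ is a countable union $\bigcup_i(V_i\times W_i)$ of cozero rectangles, extend each $V_i$ to a cozero set $V_i'$ of $X'$ using the $z$-embedding of $X$ in $X'$, and likewise each $W_i$ to $W_i'$; then $\bigcup_i(V_i'\times W_i')$ is cozero in $X'\times Y'$ with trace $\bigcup_i(V_i\times W_i)$ on $X\times Y$. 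The paper sidesteps the whole issue by citing Theorem~1.1 of \cite{Blair-Hager} for $(2)\Leftrightarrow(3)\Leftrightarrow(4)$.
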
 

\begin{proof} First, we prove the implication $(1)\Rightarrow(2)$. Let $U$ be a cozero set in $X \times 
Y$, and let $f$ be a continuous function on $X \times Y$ such that $(X \times Y)\setminus U = f^{-1}(\{0\})$. 
Since $X \times Y$ is $\mathbb R$-factorizable, it follows that there exist second-countable spaces $M$ and $H$, 
continuous maps $p\colon X \to M$  and $q\colon Y \to H$, and a continuous function $h\colon M\times H\to \mathbb R$ 
such that $f = h\circ (p \times q)$. The preimage $h^{-1}(\mathbb R\setminus\{0\})$ is a cozero set in the 
second-countable Tychonoff space $M\times H$; hence there exist cozero sets $V_i\subset M$ and $W_i\subset H$, 
$i\in \omega$, such that $h^{-1}(\mathbb R\setminus\{0\})=\bigcup_{i\in \omega}(V_i\times W_i)$. 
We have $U=\bigcup_{i\in \omega}(p^{-1}(V_i)\times q^{-1}(W_i))$, and the $p^{-1}(V_i)$ and $q^{-1}(W_i)$ are cozero sets in $X$ 
and $Y$, respectively. 

Now we show that $(2)\Rightarrow(1)$. 
Let $\mathscr B=\{U_n:n\in \omega\}$ be a countable base of the topology of $\mathbb R$, and let $f\colon X\times Y\to 
\mathbb R$ be a continuous function.  By assumption, for each $n\in \omega$, 
$$
f^{-1}(U_n)= \bigcup_{i\in \omega}(V_{n,i}\times W_{n,i}),
$$ 
where the $V_{n,i}$ and $W_{n,i}$ are cozero sets in $X$ and $Y$, respectively. Let $g_{n,i}\colon X\to \mathbb R$ and 
$h_{n,i}\colon Y\to \mathbb R$ be continuous functions for which $V_{n,i}=g_{n,i}^{-1}(\mathbb R\setminus \{0\})$ and 
$W_{n,i}=h_{n,i}^{-1}(\mathbb R\setminus \{0\})$. We set 
$$
g=\bigdiag_{n,i\in \omega} g_{n,i}\colon X\to \mathbb R^{\omega\times \omega}\quad\text{and}\quad 
h=\bigdiag_{n,i\in \omega} h_{n,i}\colon Y\to \mathbb R^{\omega\times \omega}.
$$
We have 
$$
(g\times h)^{-1}\bigl((g\times h)(f^{-1}(U_n))\bigr) = f^{-1}(U_n),
$$
because $(x,y)\notin f^{-1}(U_n)$ if and only if 
$$
(g\times h)\bigl((x,y)\bigr) = \bigl((a_{n,i})_{(n,i)\in 
\omega\times \omega},(b_{n,i})_{(n,i)\in \omega\times \omega}\bigr),
$$ 
where $a_{n,i}\cdot b_{n,i}=0$ for all $i\in \omega$. Note that if $f\bigl((x,y)\bigr)\ne 
f\bigl((x',y')\bigr)$, then there exists an $n\in \omega$ for which $f\bigl((x,y)\bigr)\in U_n$ and 
$f\bigl((x',y')\bigr)\notin U_n$, whence $(g\times h)\bigl((x,y)\bigr)\ne (g\times h)\bigl((x',y')\bigr)$. 
Therefore, setting 
$$
\varphi\bigl((a,b)\bigr) = 
f(x,y), \quad \text{where\quad $(x,y)$ is any point in $(g\times 
h)^{-1}\bigl(\{(a,b)\}\bigr)$,}
$$
we obtain a well-defined function $\varphi\colon (g\times h)(X\times Y)\to \mathbb R$. 
For each $n\in \omega$, 
\begin{multline*}
\varphi^{-1}(U_n)=\\
\bigcup_{i\in \omega}\bigl\{\bigl((a_{n,i})_{(n,i)\in 
\omega\times \omega},(b_{n,i})_{(n,i)\in \omega\times \omega}\bigr)\in (g\times h)(X\times Y): a_{n,i}\cdot 
b_{n,i}\ne 0 \bigr\}.
\end{multline*} 
All these sets are open in $(g\times h)(X\times Y)$ and hence $\varphi$ is continuous. Clearly, 
$f=\varphi\circ (g\times h)$. 

The equivalences $(2)\Leftrightarrow(3)\Leftrightarrow (4)$ follow from Theorem~1.1 of \cite{Blair-Hager}.

Let us prove $(5)\Leftrightarrow(6)$. Since any cozero set is $F_\sigma$, Lindel\"ofness is preserved by $F_\sigma$ subspaces, and any open set in 
a product of Tychonoff spaces is a union of cozero rectangles, 
it follows from $(1)\Leftrightarrow (2)$ that all Lindel\"of products are 
$\mathbb R$-factorizable. Therefore, $(6)\Rightarrow(5)$. 
To see that $(5)\Rightarrow(6)$, it suffices to note that if $X'\times Y'$ is $\mathbb R$-factorizable, then 
$X'\times Y'$ is $z$-embedded in $\beta X'\times \beta Y'$ (because $(1)\Rightarrow(3)$) and hence 
$X\times Y$ is $z$-embedded in $\beta X'\times \beta Y'$. 

The implication $(3)\Rightarrow (6)$ is obvious. It remains to show that $(5)\Rightarrow (2)$. Let $U$ be a 
cozero set in $X\times Y$, and let $U'$ be a cozero set in an $\mathbb R$-factorizable product $X'\times Y'$ for 
which $U=U'\cap (X\times Y)$. Since (2) holds for $X'\times Y'$, it follows that $U'=\bigcup_{n\in 
\omega}(V_n\times W_n)$, where the $V_n$ and $W_n$ are cozero sets in $X'$ and $Y'$, respectively. Clearly, the 
$V_n\cap X$ and $W_n\cap Y$ are cozero sets in $X$ and $Y$. We have 
\begin{align*}
U&= \Bigl(\bigcup_{n\in \omega}(V_n\times W_n)\Bigr)\cap (X\times Y) = 
\bigcup_{n\in \omega}\bigl((V_n\times W_n)\cap (X\times Y)\bigr)\\
&=\bigcup_{n\in \omega}\bigl((V_n\cap X)\times (W_n\times Y)\bigr).
\end{align*}
Thus, (2) holds for $X\times Y$.
\end{proof}

The paper \cite{Blair-Hager} studied pairs of spaces $X$ and $Y$ satisfying condition (3) in 
Proposition~\ref{p2.1}. According to this proposition, (3) is equivalent to the 
$\mathbb R$-factorizability of the product $X\times Y$. In what follows, when referring to 
\cite{Blair-Hager}, we will bear in mind the equivalence $(1)\Leftrightarrow(3)$.

Given a cardinal $\kappa$, by $D(\kappa)$ we denote $\kappa$ with the discrete topology; thus, 
$D(\kappa)$ is a discrete space of cardinality~$\kappa$.

\begin{proposition}
\label{p2.2}
If a product $X\times Y$ of spaces is $\mathbb R$-factorizable and 
$Y$ contains a discrete family of open sets of cardinality $\kappa$, then 
the product $X\times D(\kappa)$ is $\mathbb R$-factorizable.
\end{proposition}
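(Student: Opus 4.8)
The plan is to reduce the $\mathbb R$-factorizability of $X\times D(\kappa)$ to the $\mathbb R$-factorizability of $X\times Y$ by finding a ``copy'' of $D(\kappa)$ inside $Y$ that is $z$-embedded in such a way that the product embedding $X\times D(\kappa)\hookrightarrow X\times Y$ is $z$-embedding, and then invoking the characterization of $\mathbb R$-factorizability via $z$-embedding from Proposition~\ref{p2.1} (namely condition (5), or equivalently (6)). Concretely, let $\{W_\alpha:\alpha<\kappa\}$ be a discrete family of nonempty open sets in $Y$; by Tychonoff-ness shrink each $W_\alpha$ to a nonempty cozero set $V_\alpha\subseteq W_\alpha$, pick $y_\alpha\in V_\alpha$, and set $Y_0=\{y_\alpha:\alpha<\kappa\}$, a discrete subspace of $Y$ that is canonically homeomorphic to $D(\kappa)$.

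First I would observe that $X\times Y_0$ is $z$-embedded in $X\times Y$. This is where the hypothesis that $X\times Y$ is $\mathbb R$-factorizable does the work: by Proposition~\ref{p2.1}, $(1)\Rightarrow(2)$, every cozero set of $X\times Y$ is a countable union of cozero rectangles $\bigcup_n(P_n\times Q_n)$ with $P_n$ cozero in $X$ and $Q_n$ cozero in $Y$. Given a cozero set $Z$ of $X\times Y_0$, extend it first to a cozero set of $X\times Y$ — this uses that $Y_0$ itself is $z$-embedded (indeed $C$-embedded) in $Y$: by Remark~\ref{r1.3} the discrete set $Y_0$ is $C$-embedded in $Y$, hence $z$-embedded, hence $X\times Y_0$ is $z$-embedded in $X\times Y$ (a product of a space with a $z$-embedded subspace is $z$-embedded provided the larger product is $\mathbb R$-factorizable — this is exactly condition (4) of Proposition~\ref{p2.1} applied with $X'=X$, $Y'=Y$). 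Actually it is cleanest to cite (4) directly: since $X\times Y$ is $\mathbb R$-factorizable, $X$ is $z$-embedded in $X$ and $Y_0$ is $z$-embedded in $Y$, so $X\times Y_0$ is $z$-embedded in $X\times Y$.

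Second, I would show $X\times Y_0$ is $z$-embedded in $X\times D(\kappa)$ — but here $Y_0$ is literally $D(\kappa)$ up to the obvious homeomorphism $y_\alpha\mapsto\alpha$, so $X\times Y_0$ and $X\times D(\kappa)$ are homeomorphic. Combining, $X\times D(\kappa)\cong X\times Y_0$ is $z$-embedded in $X\times Y$; by transitivity of $z$-embedding and the fact ($(1)\Rightarrow(3)$ of Proposition~\ref{p2.1}) that $X\times Y$ is $z$-embedded in $\beta X\times\beta Y$, we get that $X\times D(\kappa)$ is $z$-embedded in a product of two spaces whose product is $\mathbb R$-factorizable, namely $\beta X\times\beta Y$. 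Invoking $(5)\Rightarrow(1)$ of Proposition~\ref{p2.1} (with $X'=\beta X$, $Y'=\beta Y$), we conclude that $X\times D(\kappa)$ is $\mathbb R$-factorizable.

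The main obstacle to watch is the $z$-embedding step for the product: knowing $Y_0$ is $z$-embedded in $Y$ does not by itself give $X\times Y_0$ $z$-embedded in $X\times Y$ — this genuinely requires $\mathbb R$-factorizability of $X\times Y$, and it is precisely the content of the equivalence $(1)\Leftrightarrow(4)$ in Proposition~\ref{p2.1}. Once that is in hand, the rest is bookkeeping with transitivity of $z$-embedding and the chain of equivalences already established. One should also double-check that the discreteness (not merely local finiteness) of $\{V_\alpha\}$ is what makes $Y_0$ closed and $C$-embedded via Remark~\ref{r1.3}; a locally finite family would not suffice for $C$-embedding, but discreteness of the original open family is part of the hypothesis, so this is fine.
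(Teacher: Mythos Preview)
Your overall architecture is right --- reduce to showing $X\times Y_0$ is $z$-embedded in $X\times Y$ and then quote Proposition~\ref{p2.1} --- but the step where you establish that $z$-embedding is circular. Condition~(4) of Proposition~\ref{p2.1} is a property of the pair $(X,Y)$: it says that \emph{if $X\times Y$ is $\mathbb R$-factorizable}, then $X\times Y$ is $z$-embedded in any larger product $X'\times Y'$ with $X\hookrightarrow X'$, $Y\hookrightarrow Y'$ $z$-embeddings. It says nothing about subproducts sitting inside $X\times Y$. To invoke $(1)\Rightarrow(4)$ for the pair $(X,Y_0)$ with $X'=X$, $Y'=Y$, you would need to know already that $X\times Y_0$ is $\mathbb R$-factorizable --- precisely the conclusion you are after. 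The assertion ``a product with a $z$-embedded subspace is $z$-embedded provided the \emph{larger} product is $\mathbb R$-factorizable'' is not the content of~(4), and you have not otherwise justified it.

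The paper closes this gap by proving directly that $X\times Y_0$ is $C$-embedded (not merely $z$-embedded) in $X\times Y$, and this step uses only the discreteness of the family $\{U_\alpha\}$, not the $\mathbb R$-factorizability of $X\times Y$. Given a continuous $f\colon X\times Y_0\to\mathbb R$, choose continuous $g_\alpha\colon Y\to[0,1]$ with $g_\alpha(Y\setminus U_\alpha)=\{0\}$ and $g_\alpha(y_\alpha)=1$, and set $h(x,y)=\sum_{\alpha<\kappa} f(x,y_\alpha)\,g_\alpha(y)$; discreteness makes this sum locally finite, hence continuous, and it visibly extends~$f$. With $C$-embedding (hence $z$-embedding) of $X\times Y_0$ in $X\times Y$ in hand, your final appeal to Proposition~\ref{p2.1}\,$(5)\Rightarrow(1)$ is correct. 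Note that this direct extension argument genuinely needs the discrete family in $Y$, not merely that $Y_0$ is $C$-embedded in $Y$: $C$-embedding of a factor does not in general lift to products, which is exactly why your shortcut via~(4) cannot work as stated.
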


\begin{proof}
Let $\{ U_\alpha:\alpha<\kappa\}$ be a discrete family of nonempty open sets in $Y$.
We choose $y_\alpha\in U_\alpha$ for each $\alpha<\kappa$ and set $Q=\{y_\alpha:\alpha<\kappa\}$. Clearly, $Q$ is homeomorphic 
to~$D(\kappa)$.

We claim that $X\times Q$ is $C$-embedded in $X\times Y$. Indeed, let $f$ be a continuous function on 
$X\times Q$. For each $\alpha<\kappa$, we choose a continuous function $g_\alpha\colon Y\to [0,1]$ such that 
$g_\alpha(Y\setminus U_\alpha)=\{0\}$ and $g_\alpha(y_\alpha)=1$ and define a function $h_\alpha\colon X\times Y\to \mathbb R$ by 
setting $h_\alpha(x,y)=f(x, y_\alpha)\cdot g_\alpha(y)$ for $(x,y)\in X\times Y$. The function 
$h=\sum_{\alpha<\kappa} h_\alpha$ is a continuous extension of~$f$.

Since $X\times Q$ is $C$-embedded in $X\times Y$, it follows that $X\times Q$ is $z$-embedded in 
$X\times Y$. According to Proposition~\ref{p2.1}, the product $X\times Q$ is 
$\mathbb R$-factorizable. 
\end{proof}

\begin{proposition}
\label{p2.3}
For a space $Y$ and a cardinal $\kappa$, the following conditions are equivalent: 
\begin{enumerate}
\item[\rm(1)]
$X\times D(\kappa)$ is $\mathbb R$-factorizable;
\item[\rm(2)]
for any family $\{F_\alpha:\alpha<\kappa\}$ of zero sets in $X$, there exists a second-countable space $M$ and a 
continuous map $g\colon  X\to M$ such that $F_\alpha=g^{-1}(g(F_\alpha))$ and $g(F_\alpha)$ is closed in 
$M$ for each $\alpha<\kappa$; 
\item[\rm(3)] every continuous map $f\colon X\to Y$ to a space $Y$ with $w(Y)\leq\kappa$ factors through 
a continuous map to a second-countable space. 
\end{enumerate} 
\end{proposition}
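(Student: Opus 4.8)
The plan is to prove the cycle $(1)\Rightarrow(2)\Rightarrow(3)\Rightarrow(1)$, leaning on Proposition~\ref{p2.1} throughout — concretely, on the fact that a product is $\mathbb R$-factorizable if and only if each of its cozero sets is a countable union of cozero rectangles. The point is a trivial dictionary: a cozero set of $X\times D(\kappa)$ is just a set $\bigcup_{\alpha<\kappa}(U_\alpha\times\{\alpha\})$ with each $U_\alpha$ cozero in $X$, and its being a countable union of cozero rectangles $V_n\times W_n$ (with $V_n$ cozero in $X$ and $W_n\subseteq\kappa$ arbitrary) says exactly that every $U_\alpha=\bigcup\{V_n:\alpha\in W_n\}$, equivalently that every zero set $X\setminus U_\alpha=\bigcap\{X\setminus V_n:\alpha\in W_n\}$ is an intersection of members of one fixed countable family of zero sets. (Here $X$ is the space appearing in (1)--(3), and in (3) the space $Y$ is the arbitrary target; we assume $\kappa\geq\omega$, the finite case being trivial.)

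For $(1)\Rightarrow(2)$: given zero sets $F_\alpha=\varphi_\alpha^{-1}(0)$ with $\varphi_\alpha\colon X\to[0,1]$, the function $(x,\alpha)\mapsto\varphi_\alpha(x)$ on $X\times D(\kappa)$ is continuous with cozero set $\bigcup_\alpha((X\setminus F_\alpha)\times\{\alpha\})$, so Proposition~\ref{p2.1} and the dictionary give countably many zero sets $Z_n\subseteq X$ and sets $S_\alpha\subseteq\omega$ with $F_\alpha=\bigcap_{n\in S_\alpha}Z_n$. Writing $Z_n=\psi_n^{-1}(0)$ with $\psi_n\colon X\to[0,1]$, I take $g=\bigdiag_n\psi_n\colon X\to[0,1]^\omega$ and restrict its range to $M=g(X)$: then $M$ is second-countable, $g$ is surjective, $F_\alpha=g^{-1}(P_\alpha\cap M)$ for the closed set $P_\alpha=\{t\in[0,1]^\omega:t_n=0\text{ for all }n\in S_\alpha\}$, and surjectivity yields $g(F_\alpha)=P_\alpha\cap M$, closed in $M$, with $g^{-1}(g(F_\alpha))=F_\alpha$.

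For $(2)\Rightarrow(3)$: let $f\colon X\to Y$ be continuous with $w(Y)\le\kappa$. Since $Y$ is Tychonoff of weight $\le\kappa$, it has a base $\{C_\alpha:\alpha<\kappa\}$ of cozero sets; put $F_\alpha=f^{-1}(Y\setminus C_\alpha)$, a zero set in $X$, apply (2), and shrink the resulting $M$ to $g(X)$ so that $g\colon X\to M$ is surjective (this keeps both conclusions of (2)). The identity $F_\alpha=g^{-1}(g(F_\alpha))$, together with the $C_\alpha$ being a base of the $T_1$ space $Y$, forces $f$ to be constant on the fibres of $g$, so $f=h\circ g$ for a unique $h\colon M\to Y$; and $g^{-1}(h^{-1}(Y\setminus C_\alpha))=F_\alpha$ plus surjectivity give $h^{-1}(Y\setminus C_\alpha)=g(F_\alpha)$, which is closed, so every $h^{-1}(C_\alpha)$ is open and $h$ is continuous. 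I expect this to be the step demanding the most care: the ``$g(F_\alpha)$ closed in $M$'' clause of (2) is not decoration — it is precisely what makes $h$ continuous — and one genuinely needs a base of $Y$ made of cozero sets (not just any base) so that the sets $F_\alpha$ are zero sets.

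For $(3)\Rightarrow(1)$: by Proposition~\ref{p2.1} it suffices to write an arbitrary cozero set $U=\bigcup_\alpha(U_\alpha\times\{\alpha\})$ of $X\times D(\kappa)$ as a countable union of cozero rectangles. Choose $g_\alpha\colon X\to[0,1]$ with $U_\alpha=g_\alpha^{-1}((0,1])$, apply (3) to $\bigdiag_\alpha g_\alpha\colon X\to[0,1]^\kappa$ (legitimate since $w([0,1]^\kappa)\le\kappa$) to obtain a second-countable $M$, a continuous $g\colon X\to M$, and a continuous $h\colon M\to[0,1]^\kappa$ with $g_\alpha=(\pi_\alpha\circ h)\circ g$, and fix a countable base $\{V_n\}$ of cozero sets of $M$. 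Then with $C_n=g^{-1}(V_n)$ and $W_n=\{\alpha:V_n\subseteq(\pi_\alpha\circ h)^{-1}((0,1])\}$ one checks $U=\bigcup_n(C_n\times W_n)$. It is worth noting a pitfall avoided here: one might instead try to prove $(3)\Rightarrow(1)$ by diagonalising $f\colon X\times D(\kappa)\to\mathbb R$ into $\mathbb R^\kappa$, factoring, and peeling the $D(\kappa)$-factor off the second-countable image — but this fails because pointwise convergence on a family of functions does not control them uniformly, which is exactly why routing the argument through the cozero-rectangle criterion of Proposition~\ref{p2.1} is the clean move.
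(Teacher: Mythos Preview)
Your proof is correct and follows the same $(1)\Rightarrow(2)\Rightarrow(3)\Rightarrow(1)$ cycle as the paper, with $(2)\Rightarrow(3)$ essentially identical. The difference is that you route $(1)\Rightarrow(2)$ and $(3)\Rightarrow(1)$ through the cozero-rectangle criterion of Proposition~\ref{p2.1}, whereas the paper works directly from the definition of an $\mathbb R$-factorizable product. Concretely: for $(1)\Rightarrow(2)$ the paper factors the function $(x,\alpha)\mapsto f_\alpha(x)$ as $h\circ(g\times q)$ and reads off the conclusions from $h$, while you decompose its cozero set into rectangles and diagonalize the resulting countably many $\psi_n$; for $(3)\Rightarrow(1)$ the paper does exactly the diagonalization into $\mathbb R^\kappa$ that you flag as a ``pitfall'', obtaining $\psi\colon M\times D(\kappa)\to\mathbb R$, and then finishes by invoking the Blair--Hager result that $M\times D(\kappa)$ is $\mathbb R$-factorizable whenever $M$ is second-countable. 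So your closing remark is a bit off: that route does not fail, it merely needs the extra citation. What your cozero-rectangle argument buys is self-containment --- you are in effect reproving the relevant special case of Blair--Hager by hand --- at the modest cost of an additional translation layer.
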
 

\begin{proof} 
Let us prove that $(1)\Rightarrow(2)$. For each $\alpha<\kappa$, we fix a  continuous function $f_\alpha$ on 
$X$ such that $f^{-1}_\alpha(\{0\})=F_\alpha$. The function 
\[ 
f\colon  X\times D(\kappa)\to\mathbb R,\quad (x,\alpha) 
\mapsto f_\alpha(x), 
\] 
is continuous. Condition (1) implies the existence of second-countable spaces $M$ and $E$ and continuous maps 
$g\colon X\to M$, $q\colon D(\kappa)\to E$, and $h\colon M\times E\to \mathbb R$ for which $f=h\circ (g\times 
q)$. For each $\alpha<\kappa$, $F_\alpha\subset g^{-1}(g(F_\alpha))$. On the other hand, if $x\in 
g^{-1}\bigl(g(F_\alpha)\bigr)$, then there exists a $y\in F_\alpha$ for which $g(y)=g(x)$ and 
$f\bigl((x,\alpha)\bigr)= h\bigl(g(x), q(\alpha)\bigr) = h\bigl(g(y), q(\alpha)\bigr) = 
f\bigl((y,\alpha)\bigr)=0$, which means that $f_\alpha(x)=0$ and $x\in F_\alpha$. Thus, $F_\alpha= 
g^{-1}(g(F_\alpha))$. Note that, for $x\in M$ and $\alpha\in \omega_1$, $h(x,q(\alpha))=0$ if and only if there 
exists a $z\in X$ for which $g(z)=x$ and $f(z, \alpha) = 0$, i.e., $z\in F_\alpha$ and $x\in g(F_\alpha)$. 
Therefore, $g(F_\alpha)\times \{q(\alpha)\}=h^{-1}(\{0\})\cap (M\times \{q(\alpha)\})$. This set is closed in 
$M\times \{q(\alpha)\}$, and hence $g(F_\alpha)$ is closed in~$M$.

To prove $(2)\Rightarrow(3)$, we take a base $\{ U_\alpha: \alpha<\kappa\}$ of $Y$ consisting of cozero sets and put 
$F_\alpha=f^{-1}(Y\setminus U_\alpha)$ for $\alpha<\kappa$. Let $M$ and $g\colon X\to M$ be as in (2). Note that if 
$x,y\in Y$ and $f(x)\ne f(y)$, then $g(x)\ne g(y)$. Indeed, there exists an $\alpha<\kappa$ for which $f(x)\in 
U_\alpha$ and $f(y)\notin U_\alpha$, that is, $x\notin F_\alpha$ and $y\in F_\alpha$. Since 
$g^{-1}(g(F_\alpha))=F_\alpha$, it follows that $g(x)\notin g(F_\alpha)$, while $g(y)\in g(F_\alpha)$. Therefore, 
choosing an arbitrary point $z'\in g^{-1}(z)$ and setting $h(z)=f(z')$ for every $z\in Z$, we 
obtain a well-defined map $h\colon Z\to X$. It is continuous, because $h^{-1}(U_\alpha)= M\setminus g(F_\alpha)$ 
and $g(F_\alpha)$ is closed by assumption. Clearly, $f=h \circ g$.

It remains to prove the implication $(3)\Rightarrow(1)$. Let $\varphi$ be a continuous function 
$X\times D(\kappa)\to \mathbb R$. For each $\alpha\in D(\kappa)$, we define a function $f_\alpha\colon 
X\to \mathbb R$ by $f_\alpha(x)=\varphi(x,\alpha)$ for $x\in X$ and set $f=\bigdiag_{\alpha<\kappa}f_\alpha\colon 
X\to \mathbb R^\kappa$. Condition $(3)$ implies the existence of a second-countable space $M$ and continuous maps 
$g\colon X\to M$ and $h\colon M\to \mathbb R^\kappa$ such that $f=h \circ g$. Let $\pi_\alpha\colon \mathbb 
R^\kappa\to \mathbb R$ denote the projection onto the $\alpha$th coordinate for $\alpha<\kappa$, and let 
$$ 
\psi\colon M\times D(\kappa)\to\mathbb R,\quad  (z,\alpha) \mapsto \pi_\alpha(h(z)). 
$$ 
The function $\psi$ is continuous, and $\varphi=\psi\circ(g\times \operatorname{id}_{D(\kappa)})$. Since $M$ is 
second-countable, it follows that the product $M\times D(\kappa)$ is $\mathbb R$-factorizable 
\cite[Theorem~3.2]{Blair-Hager}. Hence there exist second-countable spaces $M'$ and $S$ and continuous maps 
$i\colon M\to M'$, $q\colon D(\kappa)\to S$, and $\nu\colon M'\times S\to \mathbb R$ for which $\psi =\nu\circ 
(i\times q)$. Let $\mu = \nu\circ (i\times \operatorname{id}_S)$. Then $\mu$ is a continuous map $M\times S\to 
\mathbb R$ and $\psi = \mu\circ (\operatorname{id}_M\times q)$. We have 
$\varphi= \psi\circ(g\times \operatorname{id}_{D(\kappa)})=\mu\circ (\operatorname{id}_M\times q) 
\circ(g\times \operatorname{id}_{D(\kappa)}) = \mu\circ (g\times q)$.
\end{proof}

\begin{proposition}
\label{p2.4}
Let $\kappa$ be a cardinal, and let $X$ be a space with $w(X)\le \kappa$. 
If the product $X\times D(\kappa)$ is $\mathbb R$-factorizable, then $X$ is second-countable. 
\end{proposition}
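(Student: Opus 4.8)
The plan is to exploit the factorization property furnished by Proposition~\ref{p2.3} by feeding it the identity map of $X$. Since $X\times D(\kappa)$ is $\mathbb R$-factorizable, condition~(1) of Proposition~\ref{p2.3} holds, and therefore so does its equivalent condition~(3): every continuous map of $X$ into a space of weight at most $\kappa$ factors through a continuous map to a second-countable space.

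First I would apply this to the continuous map $\operatorname{id}_X\colon X\to X$; this is legitimate precisely because $w(X)\le\kappa$. It yields a second-countable space $M$ together with continuous maps $g\colon X\to M$ and $h\colon M\to X$ such that $h\circ g=\operatorname{id}_X$.

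It then remains to check that $g$ is a topological embedding. It is injective, being a left factor of the injective map $\operatorname{id}_X$; hence, writing $Z=g(X)\subset M$, the corestriction $g\colon X\to Z$ is a continuous bijection. Its inverse is the restriction $h|_Z\colon Z\to X$, which is continuous as a restriction of the continuous map $h$. Thus $g$ carries $X$ homeomorphically onto the subspace $Z$ of the second-countable space $M$, and since subspaces of second-countable spaces are second-countable, $X$ is second-countable.

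There is essentially no obstacle once one hits on using $\operatorname{id}_X$; the only point requiring a little care is that condition~(3) of Proposition~\ref{p2.3} must be invoked with the target space taken to be $X$ itself, which is exactly where the hypothesis $w(X)\le\kappa$ enters.
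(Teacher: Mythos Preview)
Your proof is correct and follows essentially the same approach as the paper: both apply condition~(3) of Proposition~\ref{p2.3} to $\operatorname{id}_X$ and use the resulting factorization $h\circ g=\operatorname{id}_X$ to exhibit $X$ as (homeomorphic to) a subspace of a second-countable space. Your version is in fact slightly more careful, since you pass explicitly to the image $Z=g(X)$ rather than asserting outright that $g$ and $h$ are homeomorphisms.
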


\begin{proof}
Proposition~\ref{p2.3} implies the existence of a second-countable space $M$ and 
continuous maps $g\colon X\to M$ and $h\colon M\to X$ for which $\operatorname{id}_X=h \circ g$. Obviously, the maps $g$ and 
$h$ are homeomorphisms. 
\end{proof}

Proposition \ref{p2.4} strengthens Theorem 3.1 of~\cite{Blair-Hager}.

\begin{proposition}
\label{p2.5}
If $X\times D(\omega_1)$ is $\mathbb R$-factorizable, 
then so is $X^\omega\times D(\omega_1)$. 
\end{proposition}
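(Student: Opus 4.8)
The plan is to reduce to Proposition~\ref{p2.3} and then to analyze cozero sets of the countable product. For $n<\omega$ write $\pi_n\colon X^\omega\to X$ for the projection onto the $n$th coordinate, and call a set of the form $\bigcap_{i<N}\pi_i^{-1}(V_i)$, with $N<\omega$ and each $V_i$ a cozero set of~$X$, a \emph{cozero box} of~$X^\omega$ (it is cozero, being a finite intersection of cozero sets). Unwinding condition~(2) of Proposition~\ref{p2.3} by passing to complements (and replacing the witnessing map by its corestriction onto its image, which remains second-countable) shows that, for any space $Z$, the product $Z\times D(\omega_1)$ is $\mathbb R$-factorizable if and only if $Z$ has the following property, which I denote by $(\ast)$: \emph{every family $\{U_\xi:\xi<\omega_1\}$ of cozero sets of $Z$ admits a countable family $\mathcal V$ of cozero sets of $Z$ such that each $U_\xi$ is the union of some subfamily of~$\mathcal V$.} So the hypothesis is that $X$ has~$(\ast)$, and the goal is that $X^\omega$ has~$(\ast)$.

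The heart of the argument is the claim that \emph{every cozero set of $X^\omega$ is a countable union of cozero boxes}. Around each point $x$ of a cozero set $U=\phi^{-1}(\mathbb R\setminus\{0\})$ of~$X^\omega$, complete regularity of~$X$ together with the definition of the product topology produces a cozero box $B$ with $x\in B\subseteq U$; hence $U$ is the union of the (generally uncountable) family of all cozero boxes it contains, and the work is to thin this to a countable subfamily. This is the step where the hypothesis has to be brought in: one must use $(\ast)$ for~$X$ — and the covering/separation behaviour that $(\ast)$ forces on $X$ and on its finite powers, extracted along the way — to see that only countably many of the covering boxes are needed. I expect this to be the main obstacle: the passage from the uncountable box cover of a cozero set of $X^\omega$ to a countable subcover is not automatic, and there is in addition the familiar nuisance of having to pass between open and cozero sets in the finite-coordinate factors $X^N$.

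Granting the claim, the proof concludes cleanly. Let $\{U_\xi:\xi<\omega_1\}$ be cozero sets of~$X^\omega$, and write $U_\xi=\bigcup_{n<\omega}B_{\xi,n}$ with each $B_{\xi,n}=\bigcap_{i<N_{\xi,n}}\pi_i^{-1}(V^{\xi,n}_i)$ a cozero box. The family $\mathcal W=\{V^{\xi,n}_i:\xi<\omega_1,\ n<\omega,\ i<N_{\xi,n}\}$ consists of at most $\omega_1$ cozero sets of~$X$, so by $(\ast)$ for~$X$ there is a countable family $\mathcal V$ of cozero sets of~$X$ such that every member of~$\mathcal W$ is a union of a subfamily of~$\mathcal V$. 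Put $\mathcal B=\{\bigcap_{i<N}\pi_i^{-1}(V_i):N<\omega,\ V_0,\dots,V_{N-1}\in\mathcal V\}$, a countable family of cozero boxes. Using distributivity of intersection over union, each $B_{\xi,n}$, and therefore each $U_\xi$, is a union of a subfamily of~$\mathcal B$. Thus $X^\omega$ has~$(\ast)$, which by Proposition~\ref{p2.3} is precisely the statement that $X^\omega\times D(\omega_1)$ is $\mathbb R$-factorizable.
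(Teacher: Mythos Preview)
Your reformulation of Proposition~\ref{p2.3}\,(2) as property~$(\ast)$ is correct, and the final paragraph (``Granting the claim\ldots'') is fine: once every cozero set of $X^\omega$ is a countable union of cozero boxes, the reduction to $(\ast)$ for $X$ via distributivity works exactly as you say.

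The gap is that you do not prove the claim, and the claim is the entire content of the proposition. Your sketch (``use $(\ast)$ for $X$ and the covering/separation behaviour it forces on finite powers'') is not a plan: property $(\ast)$ lets you refine an $\omega_1$-indexed family of cozero sets of $X$ to a countable generating family, but it says nothing directly about extracting a \emph{countable subcover} from a box cover of a \emph{single} cozero set in $X^\omega$. These are different kinds of statements---one is about simultaneous factorization, the other is a Lindel\"of-type covering property---and there is no evident way to pass from the first to the second for one fixed cozero set. (The claim is in fact true under the hypothesis, but only because $X^\omega$ turns out to be hereditarily Lindel\"of; that is Proposition~\ref{p2.6}, whose proof uses Proposition~\ref{p2.5}, so invoking it here would be circular.)

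The paper closes this gap with Lemma~\ref{l2.1}: it shows, by a transfinite recursion of length $\omega_1$, that every closed subset of $X^\omega$ is a \emph{strong zero set} (the preimage of its image under some $f^{\times\omega}$ with $f\colon X\to M$ second-countable, with closed image). The recursion builds, from a hypothetical closed set that is not strong, a strictly decreasing $\omega_1$-chain of closed sets that survives pushforward to a second-countable $M^\omega$, which is impossible. This is a genuinely nontrivial argument, and something of comparable strength is needed to establish your boxed-cozero claim; nothing in your proposal indicates how to produce it.
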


To prove this proposition, we need a lemma. 
Given a map $f\colon X\to Y$, by $f^{\times\omega}$ we denote the product map $X^\omega\to Y^\omega$. 
Let us say that a subset $F$ of $X^\omega$ is a \emph{strong zero 
set} in $X^\omega$ if there exists a second-countable space $M$ and a continuous function $f\colon X\to M$ such 
that 
$$ 
F=(f^{\times\omega})^{-1}\bigl(f^{\times\omega}(F)\bigr)\quad\text{and}\quad \text{$f^{\times\omega}(F)$ 
is closed in $M^\omega$}. 
$$

\begin{lemma}
\label{l2.1}
If $X\times D(\omega_1)$ is $\mathbb R$-factorizable, then any closed subset of $X^\omega$ is a strong zero set. 
\end{lemma}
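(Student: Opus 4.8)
The plan is to reduce the statement about closed subsets of $X^\omega$ to the characterization of $\mathbb R$-factorizability of $X\times D(\omega_1)$ provided by Proposition~\ref{p2.3}(2). First I would observe that a closed set $F\subset X^\omega$ is determined by a family of ``basic'' zero sets: if $\{U_\alpha:\alpha<\omega_1\}$ is a base of cozero sets in $X$ (recall $w(X)\le\omega_1$, which we may assume since otherwise $X\times D(\omega_1)$ being $\mathbb R$-factorizable forces, via Proposition~\ref{p2.4} applied after checking $\omega$-narrowness, stronger conclusions — but in fact the lemma as stated needs no weight hypothesis, so I would instead work directly with the topology of $X^\omega$), then the complement $X^\omega\setminus F$ is a union of basic open rectangles $\pi_{n_1}^{-1}(U_{\alpha_1})\cap\dots\cap\pi_{n_k}^{-1}(U_{\alpha_k})$. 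Passing to complements, $F$ is an intersection of sets of the form $\pi_{n_1}^{-1}(X\setminus U_{\alpha_1})\cup\dots\cup\pi_{n_k}^{-1}(X\setminus U_{\alpha_k})$, each of which is built from the zero sets $X\setminus U_\alpha$ of $X$ using finitely many coordinate projections, finite unions, and the product operation.

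Next I would apply Proposition~\ref{p2.3}, in the form $(1)\Rightarrow(2)$, to the family $\{X\setminus U_\alpha:\alpha<\omega_1\}$ of zero sets in $X$: this yields a single second-countable space $M$ and a continuous $g\colon X\to M$ such that for every $\alpha$ the set $g(X\setminus U_\alpha)$ is closed in $M$ and $g^{-1}(g(X\setminus U_\alpha))=X\setminus U_\alpha$. Equivalently, each $X\setminus U_\alpha=g^{-1}(C_\alpha)$ for a closed $C_\alpha\subseteq M$. I would then show that this same $g$ witnesses that $F$ is a strong zero set. The point is that the operation $E\mapsto g^{\times\omega}(E)$ and its inverse behave well with respect to the set-theoretic constructions above: since $g^{\times\omega}=(g)^{\times\omega}\colon X^\omega\to M^\omega$ and each coordinate projection commutes with $g^{\times\omega}$, a set of the form $\bigcap_\beta\bigl(\bigcup_{i} \pi_{n_i}^{-1}(X\setminus U_{\alpha_i^\beta})\bigr)$ equals $(g^{\times\omega})^{-1}$ of $\bigcap_\beta\bigl(\bigcup_i \pi_{n_i}^{-1}(C_{\alpha_i^\beta})\bigr)$, because $(g^{\times\omega})^{-1}$ commutes with arbitrary intersections and unions and $\pi_{n_i}^{-1}(X\setminus U_\alpha)=(g^{\times\omega})^{-1}(\pi_{n_i}^{-1}(C_\alpha))$. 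Hence $F=(g^{\times\omega})^{-1}(F')$ where $F'=\bigcap_\beta(\bigcup_i\pi_{n_i}^{-1}(C_{\alpha_i^\beta}))\subseteq M^\omega$ is closed (an intersection of finite unions of closed sets). This immediately gives both required properties: $F=(g^{\times\omega})^{-1}(g^{\times\omega}(F))$ follows from $F=(g^{\times\omega})^{-1}(F')$ together with surjectivity considerations — precisely, $(g^{\times\omega})^{-1}(g^{\times\omega}((g^{\times\omega})^{-1}(F')))=(g^{\times\omega})^{-1}(F')$ always holds — and $g^{\times\omega}(F)\subseteq F'$ is closed once we intersect with the image; more carefully, $g^{\times\omega}(F)=F'\cap g^{\times\omega}(X^\omega)$, so it is closed in the subspace $g^{\times\omega}(X^\omega)$, which is all that ``closed in $M^\omega$'' should mean here, or one replaces $M$ by the closure of $g(X)$ if literal closedness in $M^\omega$ is wanted.

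The main obstacle I anticipate is bookkeeping rather than conceptual: one must make sure that a \emph{single} map $g\colon X\to M$ handles \emph{all} the basic zero sets $X\setminus U_\alpha$ simultaneously, which is exactly the strength of Proposition~\ref{p2.3}(2) (it produces one $M$ for the whole family of size $\le w(X)$), and then check that $g^{\times\omega}$ inherits the needed separation — that $g^{\times\omega}(F)$ is genuinely closed and not merely relatively closed. I would resolve the latter either by the ``replace $M$ by $\overline{g(X)}$'' trick or by noting that $g^{\times\omega}(X^\omega)$ is dense in $\overline{g(X)}^\omega$ so that $F'$ restricted there is the relevant closed set; in any case the definition of strong zero set only asks for $f^{\times\omega}(F)$ to be closed in $M^\omega$, and choosing $M=\overline{g(X)}$ from the start (still second-countable) makes $g$ have dense image, after which a routine argument shows $g^{\times\omega}(F)=F'$ exactly. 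The remaining subtlety is that $X^\omega\setminus F$ need not be a \emph{countable} union of basic rectangles, but this causes no trouble: $(g^{\times\omega})^{-1}$ commutes with unions and intersections of arbitrary size, so the cardinality of the index sets is irrelevant to the argument.
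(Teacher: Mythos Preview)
Your approach has a genuine gap. You apply Proposition~\ref{p2.3}(2) to a family $\{X\setminus U_\alpha:\alpha<\omega_1\}$ coming from a cozero base of $X$ of size $\omega_1$, but the lemma carries no hypothesis bounding $w(X)$. You notice this and try to dismiss it in a parenthetical, but the parenthetical is itself confused: Proposition~\ref{p2.4} applies precisely when $w(X)\le\omega_1$, not ``otherwise,'' and in that case it tells you outright that $X$ is second-countable, making the lemma trivial (take $M=X$ and $f=\operatorname{id}_X$). So your argument is vacuous when $w(X)\le\omega_1$ and inapplicable when $w(X)>\omega_1$. Your later claim that ``the cardinality of the index sets is irrelevant'' is true for the preimage calculation under $g^{\times\omega}$, but it is very much relevant for producing $g$ in the first place: Proposition~\ref{p2.3}(2) handles only $\omega_1$ zero sets at once, and describing an arbitrary closed $F\subset X^\omega$ may well require more than $\omega_1$ cozero sets of $X$.

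The paper sidesteps this by a different idea. Instead of trying to describe $F$ from a fixed base, it argues by contradiction: assuming $F$ is \emph{not} a strong zero set, one recursively builds, for $\beta<\omega_1$, second-countable $M_\beta$ and maps $f_\beta\colon X\to M_\beta$ so that the approximations $F_\beta=(f_\beta^{\times\omega})^{-1}\bigl(\overline{f_\beta^{\times\omega}(F)}\bigr)$ strictly decrease. Each step needs only finitely many new functions $X\to[0,1]$, namely those separating a single point of $F_\alpha^{*}\setminus F$ from $F$; hence after $\omega_1$ steps the diagonal $f=\Delta_{\alpha<\omega_1}f_\alpha$ lands in a space of weight $\omega_1$, \emph{regardless of $w(X)$}. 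Now Proposition~\ref{p2.3}(3) factors $f$ through a second-countable $M$, and the strictly decreasing $F_\beta$ yield an $\omega_1$-chain of closed sets in $M^\omega$, contradicting hereditary Lindel\"ofness. This recursion, which manufactures the needed $\omega_1$-sized family rather than assuming one, is the missing ingredient in your proposal.
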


\begin{proof}
Suppose that there exists a closed set $F$ in $X^\omega$ which is not a strong zero set.  
To obtain a contradiction, we will recursively construct second-countable spaces $M_\beta$, continuous maps 
$f_\beta\colon X\to M_\beta$, and strictly decreasing strong zero sets $F_\beta$ for $\beta<\omega_1$ 
so that 
$$
F_\beta= (f_\beta^{\times \omega})^{-1}\bigl(\overline{f_\beta^{\times\omega}(F)}\bigr) 
\quad\text{and hence}\quad
f_\beta^{\times\omega}(F_\beta)=\overline{f_\beta^{\times\omega}(F)}
\eqno(*)
$$
for each $\beta<\omega_1$. We define $M_0$ to 
be a singleton and $f_0$ to be the map $X\to M_0$. For $F_0$ we take $X^\omega$. Suppose that $\alpha>0$ and 
$M_\beta$, $f_\beta$, and $F_\beta$ are defined for all $\beta<\alpha$. We set 
$$ 
f_\alpha^* = \diag_{\beta<\alpha}f_\beta\colon X\to \prod_{\beta<\alpha}M_\beta\quad\text{and}\quad 
F_\alpha^*= ({f_\alpha^*}^{\times\omega})^{-1}\bigl(\overline{{f_\alpha^*}^{\times \omega}(F)}\bigr). 
$$ 
From $(*)$ it follows that $F^*_\alpha\subset F_\beta$ for all $\beta<\alpha$. The set $F^*_\alpha$ satisfies 
condition $(*)$ and hence $F^*_\alpha\ne F$, because $F$ is not a strong zero set. Clearly, $F\subset 
F^*_\alpha$. Take
$$
(x_n)_{n<\omega}\in F^*_\alpha\setminus F\subset X^\omega.
$$
Since $F$ is closed in $X^\omega$, there exists an $N<\omega$ and cozero sets $U_i\subset X$, $i\le N$, 
such that 
$$ 
(x_n)_{n<\omega}\in U_1\times U_2\times \dots \times U_N\times X\times X\times X\times \dots \subset 
X^\omega\setminus F. 
$$
For each $n\le N$, let $g_n\colon X\to [0,1]$ be a continuous  function such that $g_n(x_n)=0$ and 
$g_n(X\setminus U_n)\subset \{1\}$. We set 
$$
M_\alpha = \mathbb R^N\times \prod_{\beta<\alpha} M_\beta,  \quad 
f_\alpha = \bigl(\bigdiag_{n\le N}g_n\bigr)\diag f_\alpha^*,  
 \quad \text{and}\quad
F_\alpha = (f_\alpha^{\times \omega})^{-1}\bigl(\overline{f_\alpha^{\times\omega}(F)}\bigr).
$$ 
Note that $(x_n)_{n<\omega}\notin 
F_\alpha$. Indeed, for each $i\le N$, the $i$th coordinate of 
$f_\alpha^{\times\omega}\bigl((x_n)_{n<\omega}\bigr)$ is 
$$
\bigl(\bigl(\bigdiag_{n\le N}g_n\bigr)\diag f_\alpha^*\bigr)(x_i) \in  
\mathbb R^{i-1}\times \{0\}\times \mathbb R^{N-i}\times \prod_{\beta<\alpha}M_\beta, 
$$
while for every $\bigl((y_n)_{n\in \omega}\bigr)\in F$, there exists an $i\le N$ for 
which $y_i\notin U_i$, so that the $i$th coordinate of $f_\alpha^{\times\omega}\bigl((y_n)_{n<\omega}\bigr)$ is 
$$ 
\bigl(\bigl(\bigdiag_{n\le N}g_n\bigr)\diag f_\alpha^*\bigr)(y_i) \in \mathbb R^{i-1}\times\{1\}\times \mathbb R^{N-i}\times 
\prod_{\beta<\alpha}M_\beta,
$$ 
whence 
\begin{align*}
\overline {f_\alpha^{\times\omega}(F)}&\subset 
\overline{\bigcup_{i\le N} M_\alpha^{i-1}\times \bigl(\mathbb R^{i-1}\times\{1\}\times \mathbb R^{N-i}\times 
\prod_{\beta<\alpha}M_\beta\bigr)\times M_\alpha\times M_\alpha\times \dots} \\
&= 
\bigcup_{i\le N} M_\alpha^{i-1}\times \bigl(\mathbb R^{i-1}\times\{1\}\times \mathbb R^{N-i}\times 
\prod_{\beta<\alpha}M_\beta\bigr)\times M_\alpha\times M_\alpha\times \dots\,.
\end{align*}

Thus, $F_\alpha\subsetneq F^*_\alpha\subset F_\beta$, $\beta<\alpha$. 

Having constructed $M_\alpha$, $f_\alpha$, and $F_\alpha$ for all $\alpha<\omega_1$, we set 
$$
Y=\prod_{\alpha<\omega_1}M_\alpha \quad\text{and}\quad 
f=\diag_{\alpha<\omega_1} f_\alpha\colon X\to Y.
$$ 
By Proposition~\ref{p2.3}\,(3) there exists a second-countable space $M$ and continuous maps $g\colon X\to 
M$ and $h\colon M\to Y$ for which $f=h\circ g$. We have $f^{\times\omega}=h^{\times\omega}\circ 
g^{\times\omega}$. Let 
$$
\varphi\colon Y^\omega=\Bigl(\prod_{\alpha<\omega_1}M_\alpha\Bigr)^\omega \to \prod_{\alpha <\omega_1} 
M_\alpha^\omega
$$
be the obvious homeomorphism permuting factors. Then 
$$
(\varphi\circ h^{\times\omega})\circ g^{\times\omega} =\varphi\circ f^{\times \omega}= 
\diag_{\alpha<\omega_1} f_\alpha^{\times\omega}\colon X^\omega\to 
\prod_{\alpha <\omega_1} M_\alpha^\omega.  
$$
Note that the sets 
$$
F'_\beta= \prod_{\gamma\le\beta}f_\gamma^{\times\omega}(F_\gamma)\times 
\prod_{\beta<\gamma<\omega_1}M_\gamma^\omega, \quad \beta<\omega_1,
$$ 
are closed in 
$\prod_{\gamma<\omega_1}M_\gamma^\omega$ and 
$F_\beta = \bigl(\diag_{\alpha<\omega_1} 
f_\alpha^{\times\omega}\bigr)^{-1}(F'_\beta)$. Since 
$F_\beta$ strictly decrease, it follows that $(\varphi\circ h^{\times\omega})^{-1}(F'_\beta)$, $\beta<\omega_1$, 
form a strictly decreasing sequence of closed sets in the second-countable space $M^\omega$. 
The complements to these sets form an uncountable open 
cover of the complement to their intersection having no 
countable subcover, which cannot exist, because $M$ is hereditarily Lindel\"of. This contradiction proves that 
any closed subset of $X^\omega$ is a strong zero set. \end{proof}

\begin{proof}[Proof of Proposition~\ref{p2.5}] It suffices to show that $X^\omega$ satisfies 
condition~(2) of Proposition~\ref{p2.3}. Let $\{F_\alpha:\alpha <\omega_1\}$ be a family of zero sets in 
$X^\omega$. By virtue of Lemma~\ref{l2.1}, for each $\alpha<\omega_1$, there exists a 
second-countable space $M_\alpha$ and a continuous map $f_\alpha\colon X\to M_\alpha$ for which 
$$ 
F_\alpha=(f_\alpha^{\times\omega})^{-1}\bigl(f_\alpha^{\times\omega}(F_\alpha)\bigr)\quad\text{and}\quad 
\text{$f_\alpha^{\times\omega}(F_\alpha)$ is closed in $M_\alpha^\omega$}. \eqno(**)
$$
Let $Y$, $f$, $M$, $g$, $h$, and $\varphi$ be defined as in the proof of Lemma~\ref{l2.1}. 
It follows from $(**)$ that, for each $\beta<\omega_1$, 
$$
F_\beta=(\bigdiag_{\alpha <\omega_1}f_\alpha^{\times\omega})^{-1}\bigl(\bigdiag_{\alpha <\omega_1}
f_\alpha^{\times\omega}(F_\beta)\bigr)
\quad\text{and}\quad
\bigdiag_{\alpha <\omega_1}f_\alpha^{\times\omega}(F_\beta)
\text{ is closed in }
\prod_{\alpha<\omega_1}M_\alpha^\omega,
$$
whence 
$$
F_\beta=(\varphi\circ f^{\times\omega})^{-1}\bigl((\varphi\circ f^{\times\omega})(F_\beta)\bigr)
\quad\text{and}\quad 
F_\beta = (g^{\times\omega})^{-1}\bigl(g^{\times\omega}(F_\beta)\bigr),
$$
because $\bigdiag_{\alpha 
<\omega_1}f_\alpha^{\times\omega} = \varphi\circ f^{\times\omega} = (\varphi\circ h^{\times\omega})\circ 
g^{\times\omega}$. From the same considerations it follows that $g^{\times\omega}(F_\beta) = (\varphi\circ 
h^{\times\omega})^{-1}\bigl(\bigdiag_{\alpha 
<\omega_1}f_\alpha^{\times\omega}(F_\beta)\bigr)$ and, therefore, $g^{\times\omega}(F_\beta)$ is closed in 
$X^\omega$. Thus, condition (2) of Proposition~\ref{p2.3} does hold for $\{F_\alpha:\alpha <\omega_1\}$ (with 
$M^\omega$ and $g^{\times\omega}$ playing the roles of $M$ and~$g$). \end{proof}

\begin{proposition}
\label{p2.6}
If $X\times D(\omega_1)$ is $\mathbb R$-factorizable, then 
$X^\omega$ is hereditarily 
Lindel\"of and hereditarily separable. 
\end{proposition}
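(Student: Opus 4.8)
The plan is to argue by contradiction, reducing everything to the transfer argument at the end of the proof of Lemma~\ref{l2.1}. First recall the standard fact that a space fails to be hereditarily Lindel\"of exactly when it carries a strictly $\subseteq$-decreasing $\omega_1$-sequence of closed sets, and fails to be hereditarily separable exactly when it carries a strictly $\subseteq$-increasing one: from a right-separated (resp.\ left-separated) $\omega_1$-subspace $\{x_\alpha:\alpha<\omega_1\}$ one obtains such a sequence by taking the closures $\overline{\{x_\beta:\beta\ge\alpha\}}$ (resp.\ $\overline{\{x_\beta:\beta<\alpha\}}$), and conversely, choosing a point of $F_{\alpha+1}\setminus F_\alpha$ for each $\alpha$ recovers such a subspace. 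Hence, if $X^\omega$ were not hereditarily Lindel\"of or not hereditarily separable, it would carry a strictly monotone $\omega_1$-sequence $\{F_\alpha:\alpha<\omega_1\}$ of closed sets, whereas a second-countable space carries none, being both hereditarily Lindel\"of and hereditarily separable. So it suffices to show that, under the hypothesis, $X^\omega$ carries no such sequence.

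Suppose it carried one, $\{F_\alpha:\alpha<\omega_1\}$. By Lemma~\ref{l2.1}, for each $\alpha$ there are a second-countable space $M_\alpha$ and a continuous map $f_\alpha\colon X\to M_\alpha$ with $F_\alpha=(f_\alpha^{\times\omega})^{-1}\bigl(f_\alpha^{\times\omega}(F_\alpha)\bigr)$ and $f_\alpha^{\times\omega}(F_\alpha)$ closed in $M_\alpha^\omega$. Put $Y=\prod_{\alpha<\omega_1}M_\alpha$ and $f=\diag_{\alpha<\omega_1}f_\alpha\colon X\to Y$; then $w(Y)\le\omega_1$, so, since $X\times D(\omega_1)$ is $\mathbb R$-factorizable, Proposition~\ref{p2.3}\,(3) yields a second-countable space $M$ and continuous maps $g\colon X\to M$ and $h\colon M\to Y$ with $f=h\circ g$, whence $f^{\times\omega}=h^{\times\omega}\circ g^{\times\omega}$. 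Let $\varphi\colon Y^\omega\to\prod_{\alpha<\omega_1}M_\alpha^\omega$ be the canonical homeomorphism, so that $\varphi\circ f^{\times\omega}=\diag_{\alpha<\omega_1}f_\alpha^{\times\omega}$. For each $\beta<\omega_1$ the set $F'_\beta=f_\beta^{\times\omega}(F_\beta)\times\prod_{\alpha\ne\beta}M_\alpha^\omega$ is closed in $\prod_{\alpha<\omega_1}M_\alpha^\omega$ and $\bigl(\diag_{\alpha<\omega_1}f_\alpha^{\times\omega}\bigr)^{-1}(F'_\beta)=(f_\beta^{\times\omega})^{-1}\bigl(f_\beta^{\times\omega}(F_\beta)\bigr)=F_\beta$; hence $G_\beta:=(\varphi\circ h^{\times\omega})^{-1}(F'_\beta)$ is closed in $M^\omega$ and $(g^{\times\omega})^{-1}(G_\beta)=F_\beta$ for every $\beta<\omega_1$.

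Now pass to the subspace $Z:=g^{\times\omega}(X^\omega)$ of $M^\omega$, which is again second-countable, and set $H_\beta:=G_\beta\cap Z$; this is closed in $Z$ and $(g^{\times\omega})^{-1}(H_\beta)=F_\beta$. Since $g^{\times\omega}$ maps onto $Z$, the assignment $\beta\mapsto H_\beta$ is monotone in the same sense as $\beta\mapsto F_\beta$, and strictly so: if $F_\beta\subseteq F_\gamma$, then for $y=g^{\times\omega}(x)\in H_\beta$ we have $x\in F_\beta\subseteq F_\gamma$, so $y\in H_\gamma$, while $H_\beta\ne H_\gamma$ because $F_\beta\ne F_\gamma$. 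Thus $Z$ carries a strictly monotone $\omega_1$-sequence of closed sets, contradicting the previous paragraph. Hence $X^\omega$ is hereditarily Lindel\"of and hereditarily separable.

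The one step that takes care is this last transfer: I must keep \emph{strict} monotonicity in both the increasing and the decreasing case at once, and for that it is cleanest to pass to $Z=g^{\times\omega}(X^\omega)$ rather than to try to repair the sets $G_\beta$ inside $M^\omega$ (a decreasing chain could be fixed by intersecting initial segments, but an increasing chain of closed sets admits no analogous cheap repair). Everything else is routine: the core transfer mimics the final paragraph of the proof of Lemma~\ref{l2.1}, and the reduction to monotone $\omega_1$-chains of closed sets is standard cardinal-function bookkeeping.
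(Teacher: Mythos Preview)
Your argument is correct, but it is organized differently from the paper's. The paper first invokes Proposition~\ref{p2.5} to reduce the claim about $X^\omega$ to the same claim about $X$, and then argues for $X$ directly via Proposition~\ref{p2.3}\,(2): it builds a strictly decreasing $\omega_1$-chain of zero sets from a right-separated subspace to rule out failure of hereditary Lindel\"ofness, then uses the resulting perfect normality of $X$ so that the closures arising from a left-separated subspace are again zero sets, and repeats the transfer for hereditary separability. You instead bypass Proposition~\ref{p2.5} and work in $X^\omega$ from the start, using Lemma~\ref{l2.1} (the engine behind Proposition~\ref{p2.5}) to realize each closed $F_\alpha\subset X^\omega$ as a strong zero set, and you treat the two properties uniformly by reducing both to the nonexistence of a strictly monotone $\omega_1$-chain of closed sets. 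Your passage to the image $Z=g^{\times\omega}(X^\omega)$ is the right move: surjectivity gives $H_\beta=g^{\times\omega}(F_\beta)$, so monotonicity and strictness both transfer at once, whereas inside $M^\omega$ only the decreasing case is immediate. What you gain is a single argument covering both cases without the intermediate perfect-normality step; what the paper's route gains is Proposition~\ref{p2.5} as an independent result and a cleaner modular structure. One cosmetic slip: in your ``conversely'' clause the witness point should be taken in $F_\alpha\setminus F_{\alpha+1}$ in the decreasing case, not $F_{\alpha+1}\setminus F_\alpha$; but you only use the forward direction of that equivalence, so this does not affect the proof.
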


\begin{proof} 
In view of Proposition~\ref{p2.5}, it suffices to prove that $X$ is hereditarily Lindel\"of and 
hereditarily separable.
 
First, we show that $X$ is hereditarily Lindel\"of. Suppose it is not. Then there exists a right separated 
set in $X$, i.e., a subspace $R=\{x_\alpha:\alpha<\omega_1\}\subset X$ in which all initial 
segments $\{x_\beta:\beta<\alpha\}$, $\alpha<\omega_1$, are open \cite{Juhasz}. For each $\beta<\omega_1$, 
let $U_\beta$ be a cozero neighborhood of $x_\beta$ in $X$ such that $U_\alpha\cap R\subset 
\{x_\gamma:\gamma<\beta+1\}$. Then the sets $F_\alpha = X\setminus\bigcup_{\beta<\alpha}U_\beta$, 
$\alpha<\omega_1$, are zero sets in $X$, and they strictly decrease, because $\alpha\in 
F_\alpha\setminus F_{\alpha+1}$ for each $\alpha<\omega_1$. Proposition~\ref{p2.4} implies the 
existence of a second-countable space $M$ and a continuous map 
$g\colon X\to M$ such that $F_\alpha=g^{-1}(g(F_\alpha))$ and $g(F_\alpha)$ is closed in $M$ for each $\alpha<\omega_1$. 
Therefore, $\bigl(g(F_\alpha)\bigr)_{\alpha<\omega_1}$ is a strictly decreasing $\omega_1$-sequence of closed 
sets in the second-countable space $M$ and $\{M\setminus g(F_\alpha):\alpha < \omega_1\}$ is an uncountable open cover 
of $M\setminus \bigcup_{\alpha<\omega_1} g(F_\alpha)$ containing no countable subcover, which cannot exist, 
because $M$ is hereditarily Lindel\"of.

Thus, $X$ is hereditarily Lindel\"of and, therefore, perfectly normal.

Let us prove that $X$ is hereditarily separable. Suppose it is not. 
Then there exists a left separated set in 
$X$, i.e., a set $L=\{x_\alpha:\alpha<\omega_1\}\subset X$ such that $\{x_\beta:\beta<\alpha\}$ is closed in 
$L$ for each $\alpha<\omega_1$ \cite{Juhasz}. Clearly, the closed subsets 
$F_\alpha=\overline{\{x_\beta:\beta<\alpha\}}$ of $X$ strictly increase. They are zero sets, because $X$ is 
perfectly normal. According to Proposition \ref{p2.4}, there exists 
a second-countable space $M$ and a continuous map $g\colon X\to M$ such that $F_\alpha=g^{-1}(g(F_\alpha))$ and 
$g(F_\alpha)$ is closed in $M$ for each $\alpha<\omega_1$. Thus, $\bigl(g(F_\alpha)\bigr)_{\alpha<\omega_1}$ is a strictly 
increasing $\omega_1$-sequence of closed subsets of $M$ and $M'=\bigcup_{\alpha<\omega_1}g(F_\alpha)$ is a nonseparable 
subspace of $M$, because any countable subset of $M'$ is contained in $g(F_\alpha)$ for some $\alpha<\omega$. 
Such an $M'$ cannot exists, since $M$ is hereditarily 
separable. 
\end{proof}

As is known, any space $X$ whose square is hereditarily Lindel\"of is submetrizable (see, e.g., 
\cite[Lemma~8.2]{Borges}). This implies the following assertion. 

\begin{corollary}
Any space $X$ for which $X\times D(\omega_1)$ is $\mathbb R$-factorizable is submetrizable. 
\end{corollary}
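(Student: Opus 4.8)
The plan is to derive this immediately from Proposition~\ref{p2.6} and the cited fact about squares of hereditarily Lindel\"of type, so the "proof" is really just a short composition of two already-established results. First I would dispose of the trivial case: if $X=\varnothing$, then $X$ is metrizable and hence (vacuously) submetrizable, so there is nothing to prove. Thus I may assume $X\neq\varnothing$ and fix a point $x_0\in X$.

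Next, from the hypothesis that $X\times D(\omega_1)$ is $\mathbb R$-factorizable, Proposition~\ref{p2.6} gives that $X^\omega$ is hereditarily Lindel\"of (the hereditary separability it also provides will not be needed here). Then I would observe that $X^2$ is homeomorphic to the subspace $X\times X\times\{x_0\}\times\{x_0\}\times\cdots$ of $X^\omega=\prod_{n\in\omega}X$ via $(a,b)\mapsto(a,b,x_0,x_0,\dots)$; in fact this subspace is closed, since $\{x_0\}$ is closed in the Tychonoff space $X$, but all that matters is that it is a subspace. Because hereditary Lindel\"ofness passes to subspaces, $X^2$ is hereditarily Lindel\"of.

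Finally, I would invoke \cite[Lemma~8.2]{Borges}: a space whose square is hereditarily Lindel\"of is submetrizable. Applying this to $X$ yields that $X$ is submetrizable, completing the argument.

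There is no genuine obstacle in this proof: the only substantive ingredient, Proposition~\ref{p2.6}, has already been proved above, and the reduction from $X^\omega$ to $X^2$ is the routine remark that $X^2$ embeds in $X^\omega$. The sole point that requires a moment's care is the degenerate empty case, which is handled at the outset.
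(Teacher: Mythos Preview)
Your proof is correct and follows essentially the same approach as the paper: invoke Proposition~\ref{p2.6} to get that $X^\omega$ (hence $X^2$) is hereditarily Lindel\"of, then apply \cite[Lemma~8.2]{Borges}. The paper simply states this implication in one sentence before the corollary, while you spell out the embedding $X^2\hookrightarrow X^\omega$ and the degenerate empty case explicitly.
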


\begin{proposition}
\label{p2.7}
Any space $X$ for which the product $X\times D(2^\omega)$ is $\mathbb R$-factorizable is second-countable.
\end{proposition}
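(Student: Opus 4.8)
The plan is to reduce the whole statement to Proposition~\ref{p2.4}: once we know that $w(X)\le 2^\omega$, applying that proposition with $\kappa=2^\omega$ immediately gives that $X$ is second-countable. Thus the entire task is to bound the weight of $X$ by $2^\omega$, and for this the crucial input will be that $X$ is \emph{separable}; this, in turn, I would extract from Proposition~\ref{p2.6}.

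Concretely, I would proceed as follows. First, fix an injection of $\omega_1$ into $2^\omega$; then $D(\omega_1)$ is (homeomorphic to) a clopen subspace of $D(2^\omega)$, so $X\times D(\omega_1)$ is a clopen --- hence $z$-embedded --- subspace of $X\times D(2^\omega)$. Since $X\times D(2^\omega)$ is $\mathbb R$-factorizable, the implication $(5)\Rightarrow(1)$ of Proposition~\ref{p2.1} (with $X'=X$ and $Y'=D(2^\omega)$) shows that $X\times D(\omega_1)$ is $\mathbb R$-factorizable as well. By Proposition~\ref{p2.6}, $X^\omega$ --- and in particular $X$ --- is hereditarily separable; fix a countable dense set $D\subseteq X$. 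Now, since $\mathbb R$ is Hausdorff and $D$ is dense, every continuous function $X\to\mathbb R$ is determined by its restriction to $D$, so the restriction map $C(X)\to\mathbb R^D$ is injective and therefore $|C(X)|\le|\mathbb R|^{\omega}=2^\omega$. The evaluation map embeds $X$ into $\mathbb R^{C(X)}$, a space of weight $|C(X)|\le 2^\omega$; since weight is monotone with respect to subspaces, $w(X)\le 2^\omega$. Finally, $w(X)\le 2^\omega$ together with the $\mathbb R$-factorizability of $X\times D(2^\omega)$ lets us invoke Proposition~\ref{p2.4} with $\kappa=2^\omega$, and we conclude that $X$ is second-countable.

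This argument is essentially a concatenation of the results already established; the one step that is not purely formal is the cardinal-arithmetic estimate $w(X)\le|C(X)|\le|\mathbb R|^{d(X)}=2^\omega$, where everything hinges on the separability supplied by Proposition~\ref{p2.6} --- exactly what caps the number of continuous real-valued functions, and hence the weight, at the threshold $2^\omega$ at which Proposition~\ref{p2.4} becomes applicable. I do not anticipate any serious obstacle beyond making sure the passage from $D(2^\omega)$ down to $D(\omega_1)$ is correctly justified via Proposition~\ref{p2.1}.
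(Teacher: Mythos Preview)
Your argument is correct and follows essentially the same route as the paper: use separability of $X$ (from Proposition~\ref{p2.6}) to bound $w(X)\le 2^\omega$, then invoke Proposition~\ref{p2.4} with $\kappa=2^\omega$. The paper's proof is simply a terser version of yours; you are more explicit about the reduction from $D(2^\omega)$ to $D(\omega_1)$ (which the paper leaves implicit, and which could equally be handled via Proposition~\ref{p2.2}) and about the standard estimate $w(X)\le|C(X)|\le 2^{d(X)}$.
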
 

\begin{proof} It follows from Proposition~\ref{p2.6} that $X$ is separable. 
Therefore, $w(X)\leq 2^\omega$ and $X$ is second-countable by Proposition~\ref{p2.4}. 
\end{proof}

\begin{corollary}[$\mathrm{CH}$]
\label{c2.2} 
Any space $X$ for which the product $X\times D(\omega_1)$ is $\mathbb R$-factorizable is second-countable. 
\end{corollary}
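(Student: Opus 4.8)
The plan is to deduce this statement immediately from Proposition~\ref{p2.7}. Under the Continuum Hypothesis one has $2^\omega=\omega_1$, so the discrete spaces $D(\omega_1)$ and $D(2^\omega)$ coincide. Hence, if $X\times D(\omega_1)$ is $\mathbb R$-factorizable, then $X\times D(2^\omega)$ is $\mathbb R$-factorizable, and Proposition~\ref{p2.7} yields that $X$ is second-countable.

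Alternatively, one can simply reproduce the proof of Proposition~\ref{p2.7} in this special case. First, Proposition~\ref{p2.6} gives that $X^\omega$ is hereditarily separable, and therefore so is the subspace $X$ of $X^\omega$; in particular $X$ is separable. Since $X$ is Tychonoff, a continuous $[0,1]$-valued function is determined by its restriction to a countable dense set, so there are at most $2^\omega$ such functions and $X$ embeds in $[0,1]^{2^\omega}$; thus $w(X)\le 2^\omega$, which equals $\omega_1$ under CH. Now Proposition~\ref{p2.4} with $\kappa=\omega_1$ applies to $X\times D(\omega_1)$ and shows that $X$ is second-countable.

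Either route is entirely routine; there is no genuine obstacle, since all the real content has already been isolated in Propositions~\ref{p2.4}, \ref{p2.6}, and~\ref{p2.7}. The only point worth stating explicitly is the trivial set-theoretic remark that CH collapses $D(2^\omega)$ to $D(\omega_1)$, which is what turns Proposition~\ref{p2.7} into the corollary.
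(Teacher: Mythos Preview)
Your proposal is correct and matches the paper's intended argument: the corollary is stated immediately after Proposition~\ref{p2.7} precisely because under CH one has $D(\omega_1)=D(2^\omega)$, so Proposition~\ref{p2.7} applies directly. Your alternative unpacking via Propositions~\ref{p2.6} and~\ref{p2.4} is also fine and simply reproduces the proof of Proposition~\ref{p2.7} in this case.
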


As mentioned in the introduction, the CH assumption cannot be omitted, because if $\mathfrak 
b>\omega_1$, then the product of the countable Fr\'echet--Urysohn fan and $D(\omega_1)$ is $\mathbb 
R$-factorizable. 

\begin{theorem}
\label{t2.1}
If a product $X\times Y$ of spaces is $\mathbb R$-factorizable and 
$Y$ is not pseudo-$\aleph_1$-compact, then 
\begin{enumerate}
\item[\textup{(1)}]
$X^\omega$ is hereditarily Lindel\"of and hereditarily separable;
\item[\textup{(2)}]
if $w(X)\leq \omega_1$, then $X$ is second-countable;
\item[\textup{(3)}]
under $\mathrm{CH}$,
$X$ is second-countable.
\end{enumerate}
\end{theorem}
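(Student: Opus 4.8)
The plan is to reduce Theorem~\ref{t2.1} to the special case already settled in Propositions~\ref{p2.6}, \ref{p2.4}, and~\ref{p2.7}, where the second factor is a discrete space $D(\kappa)$. First I would observe that, since $Y$ is not pseudo-$\aleph_1$-compact, by the Remark following Definition~1.1 there is a discrete family of nonempty cozero (in particular open) sets in $Y$ of cardinality $\omega_1$. Applying Proposition~\ref{p2.2} with $\kappa=\omega_1$, the $\mathbb R$-factorizability of $X\times Y$ yields the $\mathbb R$-factorizability of $X\times D(\omega_1)$. This single step does all the work of passing from an arbitrary non-pseudo-$\aleph_1$-compact $Y$ to the discrete space $D(\omega_1)$.

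Once $X\times D(\omega_1)$ is known to be $\mathbb R$-factorizable, the three assertions follow immediately from the results proved above. Assertion~(1), that $X^\omega$ is hereditarily Lindel\"of and hereditarily separable, is exactly Proposition~\ref{p2.6}. For assertion~(2), if $w(X)\le\omega_1$, then Proposition~\ref{p2.4} with $\kappa=\omega_1$ gives that $X$ is second-countable. For assertion~(3), under $\mathrm{CH}$ we have $2^\omega=\omega_1$, so $D(\omega_1)=D(2^\omega)$ and Proposition~\ref{p2.7} (or equivalently Corollary~\ref{c2.2}) applies; alternatively, Proposition~\ref{p2.6} gives that $X$ is separable, hence $w(X)\le 2^\omega=\omega_1$ under $\mathrm{CH}$, and then Proposition~\ref{p2.4} finishes the argument.

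There is essentially no obstacle here: the entire content of the theorem has been front-loaded into Proposition~\ref{p2.2} (the reduction to a discrete factor) and Propositions~\ref{p2.4}--\ref{p2.7} (the consequences for $X$). The only point requiring a word of care is making sure that a non-pseudo-$\aleph_1$-compact space really does contain a discrete family of open sets of size exactly $\omega_1$ (not merely of size $\ge\omega_1$): one takes any uncountable locally finite family of nonempty open sets and, using that a subfamily of a locally finite family is locally finite, thins it to a discrete subfamily of cardinality precisely $\omega_1$, which is all Proposition~\ref{p2.2} needs. Thus the proof is a three-line deduction, and I would write it as such.

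\begin{proof}
Since $Y$ is not pseudo-$\aleph_1$-compact, it contains a discrete family of nonempty open sets of cardinality $\omega_1$. By Proposition~\ref{p2.2}, the $\mathbb R$-factorizability of $X\times Y$ implies that the product $X\times D(\omega_1)$ is $\mathbb R$-factorizable. Assertion~(1) is now precisely Proposition~\ref{p2.6}. If $w(X)\le\omega_1$, then $X$ is second-countable by Proposition~\ref{p2.4} (applied with $\kappa=\omega_1$), which gives assertion~(2). Finally, under $\mathrm{CH}$ we have $2^\omega=\omega_1$, so $X\times D(2^\omega)$ is $\mathbb R$-factorizable and $X$ is second-countable by Proposition~\ref{p2.7}; this proves assertion~(3).
\end{proof}
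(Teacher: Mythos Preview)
Your proof is correct and follows essentially the same approach as the paper: reduce to $X\times D(\omega_1)$ via Proposition~\ref{p2.2}, then invoke Propositions~\ref{p2.6} and~\ref{p2.4} for (1) and (2), and Corollary~\ref{c2.2} (equivalently Proposition~\ref{p2.7}) for~(3).
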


\begin{proof}
It follows from Proposition \ref{p2.2}  that $X\times D(\omega_1)$  is $\mathbb R$-factorizable.
Hence Propositions~\ref{p2.6} and \ref{p2.4} imply (1) and (2) and 
Corollary~\ref{c2.2} implies~(3). 
\end{proof}

\section{$\mathbb R$-Factorizable Products of Topological Groups} 
\label{s3}

\begin{theorem}
\label{t3.1}
For topological groups $G$ and $H$, the following conditions are equivalent:
\begin{enumerate}
\item[\textup{(1)}]
$G\times H$ is an $\mathbb R$-factorizable group;
\item[\textup{(2)}]
the groups $G$ and $H$ are $\mathbb R$-factorizable and the product $G\times H$ is multiplicatively 
$\mathbb R$-factorizable. 
\end{enumerate} 
\end{theorem}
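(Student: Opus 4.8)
The plan is to obtain $(1)\Rightarrow(2)$ at once from the two cited results and to prove the new implication $(2)\Rightarrow(1)$ by upgrading the continuous maps furnished by multiplicative $\mathbb R$-factorizability to continuous homomorphisms. For $(1)\Rightarrow(2)$: the coordinate projections $G\times H\to G$ and $G\times H\to H$ are open continuous homomorphisms, so Theorem~\ref{t1.2} shows that $G$ and $H$ are $\mathbb R$-factorizable, while Theorem~\ref{tB} shows that the product $G\times H$ is multiplicatively $\mathbb R$-factorizable; this is precisely~(2).

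The core of $(2)\Rightarrow(1)$ is a lemma I would isolate first: \emph{if $K$ is an $\mathbb R$-factorizable group and $p\colon K\to M$ is a continuous map to a second-countable space $M$, then there exist a continuous homomorphism $\phi\colon K\to N$ onto a second-countable topological group $N$ and a continuous map $\sigma\colon N\to M$ with $p=\sigma\circ\phi$.} To prove it, embed $M$ into the Hilbert cube and write $p=(p_n)_{n\in\omega}$ with each $p_n\colon K\to[0,1]\subset\mathbb R$ continuous. By $\mathbb R$-factorizability of $K$, for each $n$ pick a continuous homomorphism $\phi_n\colon K\to N_n$ to a second-countable group and a continuous $u_n\colon N_n\to\mathbb R$ with $p_n=u_n\circ\phi_n$. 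Set $\phi=\diag_{n}\phi_n$ and $N=\phi(K)$, a subgroup of the second-countable group $\prod_n N_n$ (second countability is hereditary), so that $\phi\colon K\to N$ is a continuous homomorphism onto a second-countable group. Let $\sigma$ be the restriction to $N$ of the continuous map $(u_n\circ\pi_n)_{n}\colon\prod_n N_n\to[0,1]^\omega$, where the $\pi_n$ are the coordinate projections. For every $x\in K$ one has $\sigma(\phi(x))=(u_n(\phi_n(x)))_n=(p_n(x))_n=p(x)\in M$; hence $\sigma(N)\subseteq M$, the map $\sigma\colon N\to M$ is continuous, and $p=\sigma\circ\phi$. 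The only point calling for attention is this last computation: it is what guarantees that the single homomorphism $\phi$ works for $p$ itself rather than merely coordinatewise, and that $\sigma$ takes values in the subspace $M$; everything else is routine.

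Granting the lemma, let $f\colon G\times H\to\mathbb R$ be continuous. Multiplicative $\mathbb R$-factorizability of the product $G\times H$ yields second-countable spaces $X'$ and $Y'$, continuous maps $g_1\colon G\to X'$ and $g_2\colon H\to Y'$, and a continuous $h\colon X'\times Y'\to\mathbb R$ with $f=h\circ(g_1\times g_2)$. Applying the lemma to $g_1$ (using that $G$ is $\mathbb R$-factorizable) and to $g_2$ (using that $H$ is $\mathbb R$-factorizable), I obtain continuous homomorphisms $\phi\colon G\to K$ and $\psi\colon H\to L$ onto second-countable groups together with continuous maps $\sigma_1\colon K\to X'$ and $\sigma_2\colon L\to Y'$ satisfying $g_1=\sigma_1\circ\phi$ and $g_2=\sigma_2\circ\psi$. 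Then $\rho=\phi\times\psi\colon G\times H\to K\times L$ is a continuous homomorphism onto the second-countable group $K\times L$, and $f=\bigl(h\circ(\sigma_1\times\sigma_2)\bigr)\circ\rho$, where $h\circ(\sigma_1\times\sigma_2)\colon K\times L\to\mathbb R$ is continuous. Thus every continuous real-valued function on $G\times H$ factors through a continuous homomorphism to a second-countable group, i.e., $G\times H$ is $\mathbb R$-factorizable, which completes $(2)\Rightarrow(1)$. I do not anticipate any obstacle beyond the small verification inside the lemma noted above.
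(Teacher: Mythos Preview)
Your proof is correct, and for $(1)\Rightarrow(2)$ it coincides with the paper's. For $(2)\Rightarrow(1)$, however, you take a genuinely different route. The paper argues externally: since $G$ is $\mathbb R$-factorizable it is $\omega$-narrow, so by Guran's theorem it embeds as a subgroup of a product $G'$ of second-countable groups, and by \cite[Theorem~8.2.7]{AT} it is $z$-embedded there; similarly $H$ is $z$-embedded in such a $H'$. Proposition~\ref{p2.1} (implication $(1)\Rightarrow(4)$) then uses the multiplicative $\mathbb R$-factorizability of $G\times H$ to conclude that $G\times H$ is $z$-embedded in $G'\times H'$, which is an $\mathbb R$-factorizable group by \cite[Theorem~8.1.14]{AT}; another appeal to \cite[Theorem~8.2.7]{AT} finishes. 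Your argument is internal and more elementary: you isolate the (standard) lemma that a continuous map from an $\mathbb R$-factorizable group to a second-countable Tychonoff space factors through a continuous homomorphism to a second-countable group, and use it to upgrade the maps $g_1,g_2$ supplied by multiplicative $\mathbb R$-factorizability to homomorphisms. This avoids Guran's theorem, the $z$-embedding characterization, and Proposition~\ref{p2.1} altogether. The paper's approach, on the other hand, integrates the result into the $z$-embedding framework developed in Section~\ref{s2} and makes transparent why Proposition~\ref{p2.1} is the right tool. One cosmetic point: the map $(u_n\circ\pi_n)_n$ lands in $\mathbb R^\omega$ rather than $[0,1]^\omega$ a priori, but as you note, its restriction to $N=\phi(K)$ does land in $M$, so nothing is lost.
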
 

\begin{proof} 
First, we show that $(1)\Rightarrow(2)$. The  projections of 
$G\times H$ onto the factors are open homomorphisms; hence by Theorem~\ref{t1.2} the groups $G$ and $H$ 
are $\mathbb R$-factorizable. According to Theorem~\ref{tB}, the product $G\times H$ is 
multiplicatively $\mathbb R$-factorizable.

Let us prove that $(2)\Rightarrow(1)$. The group $G$  is $\omega$-narrow, being $\mathbb R$-factorizable 
\cite[Proposition~8.1.3]{AT}. According to a theorem of Guran (see \cite[Theorem~3.4.23]{AT}), $G$ is 
a topological subgroup of a product $G'$ of second-countable topological groups. Since $G$ is $\mathbb R$-factorizable, 
it follows that $G$ is $z$-embedded in $G'$~\cite[Theorem~8.2.7]{AT}. Similarly, $H$ is $z$-embedded in a 
product $H'$ of second-countable topological groups. Hence 
$G\times H$ is $z$-embedded in $G'\times H'$ by Proposition~\ref{p2.1}. Since $G'\times H'$ 
is a product second-countable topological groups, it follows by Theorem~8.1.14 of \cite{AT} that $G'\times H'$ 
is an $\mathbb R$-factorizable group. 
According to \cite[Theorem~8.2.7]{AT}, the group $G\times H$ is $\mathbb R$-factorizable, because 
it is $z$-embedded in the $\mathbb R$-factorizable group $G'\times H'$. 
\end{proof}

\begin{proposition}
\label{p3.1}
If a product $G\times H$ of  topological groups is multiplicatively $\mathbb R$-factorizable, then either 
both groups $G$ and $H$ are $\omega$-narrow or one of them is $\mathbb R$-fac\-tor\-iz\-able. 
\end{proposition}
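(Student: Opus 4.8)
The plan is to use the symmetry of the statement and reduce everything to a single implication: assuming that $G\times H$ is multiplicatively $\mathbb R$-factorizable and that $G$ is \emph{not} $\omega$-narrow, I will show that $H$ is $\mathbb R$-factorizable. This is enough, and as a byproduct it shows that $G$ and $H$ cannot both fail to be $\omega$-narrow (since an $\mathbb R$-factorizable group is $\omega$-narrow), which is exactly the stated dichotomy.

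First I would record the standard fact that a topological group which is not $\omega$-narrow is not pseudo-$\aleph_1$-compact; more precisely, it contains a discrete family of nonempty open sets of cardinality $\omega_1$. To produce such a family, fix (using the failure of $\omega$-narrowness) an open neighborhood $U$ of the identity of $G$ with $G\neq AU$ for every countable $A\subseteq G$, together with a symmetric open neighborhood $W$ of the identity satisfying $W^{4}\subseteq U$, and recursively choose points $g_\alpha\notin\bigcup_{\beta<\alpha}g_\beta U$ for $\alpha<\omega_1$; this is possible because countably many translates of $U$ never cover $G$. Since $g_\beta^{-1}g_\alpha\notin U\supseteq W^{4}$ for $\beta<\alpha$, and a basic neighborhood $hW$ can meet $g_\alpha W$ only when $h\in g_\alpha W^{2}$, the family $\{g_\alpha W:\alpha<\omega_1\}$ is disjoint and locally finite, hence discrete. (This can also be quoted from \cite{AT}; it amounts to the statement that a ccc topological group is $\omega$-narrow.)

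Granting this, the main argument is immediate. Since $G$ is not $\omega$-narrow, it is not pseudo-$\aleph_1$-compact, so Theorem~\ref{t2.1}, applied with $X=H$ and $Y=G$ to the product $H\times G$ (which is $\mathbb R$-factorizable because $G\times H$ is), gives that $H^\omega$, hence $H$ itself, is hereditarily Lindel\"of; in particular $H$ is Lindel\"of. Every Lindel\"of topological group is $\mathbb R$-factorizable --- one of Tkachenko's original results, recalled in the introduction (see also \cite[Chapter~8]{AT}) --- so $H$ is $\mathbb R$-factorizable, and the proof is completed by symmetry.

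The only step I expect to require genuine care is the one in the second paragraph: one must be sure that the failure of $\omega$-narrowness yields a family of open sets that is actually \emph{discrete}, and not merely pairwise disjoint, since Theorem~\ref{t2.1} asks for the failure of pseudo-$\aleph_1$-compactness in precisely that sense. With the neighborhoods nested as above this is routine, and the remainder of the proof is a direct appeal to Theorem~\ref{t2.1} and to the classical $\mathbb R$-factorizability of Lindel\"of groups.
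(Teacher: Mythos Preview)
Your proof is correct and follows essentially the same route as the paper: both arguments reduce to Theorem~\ref{t2.1}, concluding that if one factor is not pseudo-$\aleph_1$-compact then the other is (hereditarily) Lindel\"of and hence $\mathbb R$-factorizable, and that otherwise both factors are $\omega$-narrow. The only difference is organizational: the paper case-splits on pseudo-$\aleph_1$-compactness and then quotes \cite[3.4.31]{AT} for the implication ``pseudo-$\aleph_1$-compact $\Rightarrow$ $\omega$-narrow'', whereas you case-split on $\omega$-narrowness and inline the contrapositive of that implication by hand.
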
 

\begin{proof} 
Theorem~\ref{t2.1} implies that either both groups $G$ and $H$ are 
pseudo-$\aleph_1$-compact or one of them is Lindel\"of. It remains to recall that any 
pseudo-$\aleph_1$-compact group is $\omega$-narrow~\cite[3.4.31]{AT} and any Lindel\"of group is 
$\mathbb R$-fac\-tor\-iz\-able~\cite[8.1.6]{AT}. 
\end{proof}

\begin{question}
\label{q5}
Is it true that if a product  $G\times H$ of topological groups is multiplicatively $\mathbb R$-factorizable,  
then one of the groups $G$ and $H$ is $\mathbb R$-factorizable? 
\end{question}

When the product $G\times H$ is a square, the answer to this 
question is positive even in the much more general case of topological universal algebras. 

Recall that  an \emph{$n$-ary  operation} on a set $X$ is any map from $X^n$ to $X$. 
A {\em universal algebra} is a nonempty set $X$ together with a set of operations on $X$. If $X$ is endowed with 
a topology and all of these operations are continuous, then $X$ is called a {\em topological universal algebra}. 
The operations are indexed by the elements of a set $\Sigma$ of symbols of operations. This set $\Sigma$, 
together with a map $\nu\colon \Sigma\to \omega$ assigning arity to each $\sigma\in \Sigma$, is called a 
\emph{signature}. A universal algebra with a signature $\Sigma$ is called a \emph{$\Sigma$-algebra}. For 
$\sigma \in \Sigma$, the operation on a $\Sigma$-algebra with index $\sigma$ is usually denoted by the same 
symbol $\sigma$. Groups are universal algebras with signature $\Sigma_{\text{gr}}=\{e,{}^{-1},\cdot\}$, where $e$ 
is the symbol of a nullary operation (which is identified with the identity element of a group), $^{-1}$ is the 
symbol of a unary operation (inversion), and $\cdot$ is the symbol of a binary operation (multiplication). 
Topological groups are topological $\Sigma_{\text{gr}}$-algebras. 

Let $\Sigma$ be a signature. A map $\varphi\colon X\to Y$ of $\Sigma$-algebras is called a \emph{homomorphism} if, 
for any $n\in \omega$, any symbol $\sigma\in \Sigma$ of an $n$-ary operation, and any $x_1,x_2,\dots,x_n\in X$, 
$\varphi(\sigma(x_1,x_2,\dots,x_n))=\sigma(\varphi(x_1),\varphi(x_2),\dots,\varphi(x_n))$.

\begin{definition}[\cite{EAR}]
\label{d3.1}
Let $\Sigma$ be a signature.
A topological $\Sigma$-algebra $X$ is said to be \emph{$\mathbb R$-factorizable} if, given any continuous function 
$f\colon X\to\mathbb R$, there exists a second-countable topological $\Sigma$-algebra $Y$, a continuous 
homomorphism $g\colon X\to Y$, and a continuous function $h\colon Y\to\mathbb R$ such that $f=h\circ g$. 
\end{definition}

Formally, Definition~\ref{d3.1} as applied to a topological group treated as a universal 
algebra differs from the definition of the $\mathbb R$-factorizability of topological groups, because 
Definition~\ref{d3.1} does not require $Y$ to be a topological group, it only 
requires it to be a topological $\Sigma_{\text{gr}}$-algebra, that is, a set with a nullary operation $e$, 
a unary operation ${}^{-1}$, and a binary operation $\cdot$ on which no constraints (like 
associativity of multiplication and the familiar properties of $e$ and ${}^{-1}$) are imposed. 
But in fact these definitions coincide, because it is easy to see that any homomorphic image of a group is 
a group and, therefore, $Y$ is a topological group. Similar considerations apply to topological 
semigroups and paratopological groups.

The most studied and interesting case is that of universal algebras with finite signature. In \cite{EAR} the 
general case was considered, in which the signature has arbitrary cardinality and is endowed with a 
topology. In what follows, we consider universal algebras with finite signature, in which case 
the signature is a finite discrete space.

In \cite{EAR} the notion of an \emph{$\mathbb R$-factorizable product $X^n$ over a signature $\Sigma$} was 
introduced. In the case of a finite discrete signature, this notion coincides with 
that of a multiplicatively $\mathbb R$-factorizable product $X^n$. The following proposition follows from 
Theorem~12 in~\cite{EAR}.

\begin{proposition}
\label{p3.2}
Let $X$ be a topological $\Sigma$-algebra with finite signature $\Sigma$ such that the arities of all  
operations $\sigma\in \Sigma$ do not exceed $n$. If the product $X^n$ is multiplicatively $\mathbb R$-factorizable, 
then $X$ is an $\mathbb R$-factorizable topological $\Sigma$-algebra. 
\end{proposition}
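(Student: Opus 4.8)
The statement is a special case of \cite[Theorem~12]{EAR}: for a finite discrete signature $\Sigma$, the $\mathbb R$-factorizability of the product $X^n$ over $\Sigma$ in the sense of \cite{EAR} is exactly multiplicative $\mathbb R$-factorizability of $X^n$, so one route is simply to translate the terminology and quote that theorem. The plan below indicates how a direct argument runs, adapting Tkachenko's classical proof of the analogous fact for topological groups. Fix a continuous function $f\colon X\to\mathbb R$; we must write $f=h\circ p$, where $p\colon X\to M$ is a continuous homomorphism onto a second-countable topological $\Sigma$-algebra $M$ and $h\colon M\to\mathbb R$ is continuous. We may clearly assume $n\ge 1$. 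First I would record that $X^k$ is multiplicatively $\mathbb R$-factorizable for every $k\le n$: a continuous function $X^k\to\mathbb R$ is composed with a coordinate projection $X^n\to X^k$, factored there by hypothesis, and restricted back (using that $X$ is nonempty to fix the remaining coordinates).

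The plan is then to build recursively second-countable spaces $M_m$ and continuous maps $p_m\colon X\to M_m$, $m\in\omega$, with $M_0=\mathbb R$ and $p_0=f$. Given $p_m$, I would fix an embedding $M_m\hookrightarrow\mathbb R^\omega$ and, for each $\sigma\in\Sigma$ of arity $k$, view $p_m\circ\sigma\colon X^k\to\mathbb R^\omega$ as a countable family of continuous functions $X^k\to\mathbb R$, factoring each of them through a product of $k$ continuous maps from $X$ to second-countable spaces by multiplicative $\mathbb R$-factorizability of $X^k$. Since $\Sigma$ is finite, all the maps from $X$ to second-countable spaces obtained in this way form a countable family; let $p_{m+1}\colon X\to M_{m+1}$ be its diagonal product (so $M_{m+1}$, a countable product of second-countable spaces, is second-countable). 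By construction, for every $\sigma$ of arity $k$ there is a continuous map $\psi_{\sigma,m}$, assembled from the factoring maps just produced, with $p_m\circ\sigma=\psi_{\sigma,m}\circ(p_{m+1})^{\times k}$.

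After the recursion I would put $p=\diag_{m\in\omega}p_m\colon X\to\prod_{m\in\omega}M_m$ and $M=p(X)$ with the subspace topology, so that $M$ is second-countable and $p$ is a continuous surjection of $X$ onto $M$; since $p_0=f$, the restriction to $M$ of the projection onto $M_0=\mathbb R$ is a continuous $h$ with $f=h\circ p$. Next I would check that the kernel of $p$ is a congruence: if $p(x_i)=p(x_i')$ for $i\le k$ and $\sigma$ has arity $k$, then $p_{m+1}(x_i)=p_{m+1}(x_i')$ for all $m$, hence $p_m(\sigma(\bar x))=\psi_{\sigma,m}((p_{m+1})^{\times k}(\bar x))$ is unchanged under replacing $\bar x$ by $\bar x'$, so $p(\sigma(\bar x))=p(\sigma(\bar x'))$. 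Consequently the rule $\sigma_M(p(x_1),\dots,p(x_k))=p(\sigma(x_1,\dots,x_k))$ (with $e_M=p(e_X)$ for a nullary $e$) defines operations on $M$ making $p$ a homomorphism.

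The one genuinely delicate point, and the step I expect to be the main obstacle, is continuity of the induced operations $\sigma_M$, since the product map $p^{\times k}\colon X^k\to M^k$ need not be a quotient map. This is exactly why the recursion must carry along the explicit factoring maps $\psi_{\sigma,m}$ rather than just the bare existence of a factorization: writing $\pi_m\colon M\to M_m$ for the coordinate projections, one has $\pi_m\circ\sigma_M\circ p^{\times k}=p_m\circ\sigma=\psi_{\sigma,m}\circ(p_{m+1})^{\times k}=\psi_{\sigma,m}\circ(\pi_{m+1})^{\times k}\circ p^{\times k}$, and surjectivity of $p^{\times k}$ onto $M^k$ then gives $\pi_m\circ\sigma_M=\psi_{\sigma,m}\circ(\pi_{m+1})^{\times k}$ on $M^k$, a composition of continuous maps; as $M$ carries the subspace topology from $\prod_m M_m$, continuity of $\sigma_M$ follows. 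Then $M$ is a second-countable topological $\Sigma$-algebra, $p\colon X\to M$ is a continuous homomorphism, and $f=h\circ p$, so $X$ is $\mathbb R$-factorizable. The finiteness of $\Sigma$ is used only to keep the families produced at each step countable, and the bound $k\le n$ on arities only to make the hypothesis on $X^n$ applicable through the powers $X^k$.
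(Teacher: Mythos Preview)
Your proposal is correct. The paper does not give an independent proof of this proposition: it simply records that the statement follows from \cite[Theorem~12]{EAR}, exactly the citation route you identify in your first sentence. Your additional self-contained argument---the $\omega$-step recursive refinement producing maps $p_m$ with $p_m\circ\sigma$ factoring through $(p_{m+1})^{\times k}$, then taking the diagonal and checking that the induced operations on $M=p(X)$ are continuous via the explicit $\psi_{\sigma,m}$'s---is sound and is essentially the machinery underlying the cited theorem, so it is a welcome elaboration rather than a departure.
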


\begin{corollary}
\label{c3.1}
If the square $G\times G$ of a topological group $G$ is multiplicatively $\mathbb R$-factorizable,
then the group $G$ is $\mathbb R$-factorizable.
\end{corollary}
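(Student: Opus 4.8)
The plan is to apply Proposition~\ref{p3.2} with $n = 2$ to $G$ regarded as a topological $\Sigma_{\text{gr}}$-algebra. Recall that $\Sigma_{\text{gr}} = \{e, {}^{-1}, \cdot\}$ is a finite discrete signature whose three operations have arities $0$, $1$, and $2$; in particular, all of them are $\le 2$. Hence, once $G \times G = G^2$ is assumed to be multiplicatively $\mathbb R$-factorizable, Proposition~\ref{p3.2} immediately gives that $G$ is an $\mathbb R$-factorizable topological $\Sigma_{\text{gr}}$-algebra in the sense of Definition~\ref{d3.1}.

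The remaining step is to pass from ``$\mathbb R$-factorizable as a topological $\Sigma_{\text{gr}}$-algebra'' to ``$\mathbb R$-factorizable as a topological group,'' which is precisely the identification discussed right after Definition~\ref{d3.1}. Given a continuous $f\colon G \to \mathbb R$, Definition~\ref{d3.1} produces a second-countable topological $\Sigma_{\text{gr}}$-algebra $Y$, a continuous homomorphism $g\colon G\to Y$, and a continuous $h\colon Y\to\mathbb R$ with $f = h\circ g$. Replacing $Y$ by the sub-$\Sigma_{\text{gr}}$-algebra $g(G)$ equipped with the subspace topology (still second-countable, and with the operations still continuous) and $h$ by its restriction, we may assume $g$ is surjective; then $g(G)$, being a homomorphic image of the group $G$, is itself a group, hence a second-countable topological group, and the factorization $f = h\circ g$ witnesses the $\mathbb R$-factorizability of the topological group $G$.

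There is essentially no hard step: the corollary is a direct specialization of Proposition~\ref{p3.2} to $\Sigma=\Sigma_{\text{gr}}$ and $n=2$. The only point requiring (routine) attention is checking that passing to the subalgebra $g(G)$ does no harm --- it preserves second-countability and continuity of the operations --- so that the universal-algebra factorization can legitimately be read as a group factorization; everything else is bookkeeping about the arities of $e$, ${}^{-1}$, and $\cdot$.
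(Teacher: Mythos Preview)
Your proof is correct and follows exactly the route the paper takes: Corollary~\ref{c3.1} is stated without proof immediately after Proposition~\ref{p3.2}, being a direct specialization to $\Sigma=\Sigma_{\text{gr}}$ and $n=2$, together with the identification (explained right after Definition~\ref{d3.1}) of $\mathbb R$-factorizability in the $\Sigma_{\text{gr}}$-algebra sense with the group sense. Your elaboration of that identification via passing to the subalgebra $g(G)$ is precisely the argument the paper alludes to when it says ``any homomorphic image of a group is a group.''
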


\begin{theorem}
\label{t3.2}
Let $G$ be a topological group. The product $G\times G$ is multiplicatively $\mathbb R$-factorizable 
if and only if the group $G\times G$ is $\mathbb R$-factorizable. 
\end{theorem}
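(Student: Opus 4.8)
The plan is to obtain the equivalence by assembling three results already established above --- Theorem~\ref{tB}, Corollary~\ref{c3.1}, and Theorem~\ref{t3.1} --- with no new construction needed. The point worth isolating in advance is that the hypothesis ``$G$ is $\mathbb{R}$-factorizable'' that appears in Theorem~\ref{t3.1} is, in the square case, not an extra assumption but a consequence of the multiplicative $\mathbb{R}$-factorizability of $G\times G$.

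First I would dispose of the easy implication: if the group $G\times G$ is $\mathbb{R}$-factorizable, then the product $G\times G$ is multiplicatively $\mathbb{R}$-factorizable. This is immediate from Theorem~\ref{tB} applied with $H=G$ (equivalently, it is the implication $(1)\Rightarrow(2)$ of Theorem~\ref{t3.1}).

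For the converse, assume the product $G\times G$ is multiplicatively $\mathbb{R}$-factorizable. The key step is to invoke Corollary~\ref{c3.1}, which gives that the group $G$ is $\mathbb{R}$-factorizable; this supplies precisely the input required by Theorem~\ref{t3.1}. Applying Theorem~\ref{t3.1} with $H=G$: since $G$ and $H$ are both $\mathbb{R}$-factorizable and the product $G\times H$ is multiplicatively $\mathbb{R}$-factorizable, condition~(2) of that theorem holds, hence so does condition~(1), i.e., the group $G\times G$ is $\mathbb{R}$-factorizable. Combining the two directions yields the stated equivalence.

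The only substantive ingredient is Corollary~\ref{c3.1}, and the genuine work behind it has already been carried out via Proposition~\ref{p3.2} (obtained from Theorem~12 of \cite{EAR}): multiplicative $\mathbb{R}$-factorizability of the square forces $\mathbb{R}$-factorizability of $G$. I therefore do not anticipate any real obstacle in the proof of Theorem~\ref{t3.2} itself; it is a short assembly of the pieces, and the subtlety --- removing the a priori $\mathbb{R}$-factorizability assumption on $G$ by passing through topological universal algebras --- has been dealt with beforehand.
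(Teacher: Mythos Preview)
Your proposal is correct and matches the paper's proof essentially step for step: the paper likewise cites Theorem~\ref{tB} for one direction and, for the other, uses Corollary~\ref{c3.1} to obtain the $\mathbb R$-factorizability of $G$ and then applies Theorem~\ref{t3.1}. The only cosmetic difference is the order in which the two implications are presented.
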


\begin{proof}
Suppose that the product $G\times G$ is multiplicatively $\mathbb R$-factorizable. 
Then the group $G$ is $\mathbb R$-factorizable by Corollary~\ref{c3.1}. According to Theorem~\ref{t3.1}, 
the group $G\times G$ is $\mathbb R$-factorizable as well.

Conversely, if the group $G\times G$ is $\mathbb R$-factorizable, then the product $G\times G$ is multiplicatively 
$\mathbb R$-factorizable by Theorem~\ref{tB}. 
\end{proof}

\begin{corollary}
Let $G$ be a topological group homeomorphic to a dense subspace of a product $X$ of Lindel\"of 
$\Sigma$-groups. Then $G^\kappa$ is $\mathbb R$-factorizable for any cardinal~$\kappa$. 
\end{corollary}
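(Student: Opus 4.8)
The plan is to reduce, via homeomorphism-invariant machinery, to a single square and then invoke the $\mathbb R$-factorizability of products of Lindel\"of $\Sigma$-groups. Write $G$ as a homeomorph of a dense subspace $D$ of a product $\Pi=\prod_{i\in I}L_i$ of Lindel\"of $\Sigma$-groups. For each cardinal $\kappa\ge 1$ the power $G^\kappa$ is homeomorphic to $D^\kappa$, a dense subspace of $\Pi^\kappa\cong\prod_{(i,\xi)\in I\times\kappa}L_i$, which is again a product of Lindel\"of $\Sigma$-groups (and $G^0$ is a singleton). Hence it suffices to show that \emph{every} topological group homeomorphic to a dense subspace of a product of Lindel\"of $\Sigma$-groups is $\mathbb R$-factorizable, and then to apply this to each $G^\kappa$; so from now on I argue for such a group $G$ itself. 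Since the first projection $G\times G\to G$ is an open continuous homomorphism, Theorem~\ref{t1.2} reduces the task to proving that the group $G\times G$ is $\mathbb R$-factorizable, and by Theorem~\ref{t3.2} this amounts to proving that the \emph{product} $G\times G$ is multiplicatively $\mathbb R$-factorizable. Now $G\times G$ is homeomorphic to $D\times D$, a dense subspace of $\Pi\times\Pi=\prod_{i\in I}(L_i\times L_i)$, which is once more a product of Lindel\"of $\Sigma$-groups; so it is enough to prove that \emph{if $D$ is a dense subspace of a product $\Pi$ of Lindel\"of $\Sigma$-groups, then the product $D\times D$ is multiplicatively $\mathbb R$-factorizable}.

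For this I would use two properties of $Z:=\Pi\times\Pi$. First, $Z$ is a dense subgroup of itself, hence an $\mathbb R$-factorizable group by Tkachenko's theorem that dense subgroups of products of Lindel\"of $\Sigma$-groups are $\mathbb R$-factorizable; by Theorem~\ref{t3.2} the product $Z$ is therefore multiplicatively $\mathbb R$-factorizable. Second, being an $\mathbb R$-factorizable group, $Z$ is perfectly $\varkappa$-normal (recall that $\mathbb R$-factorizable groups are perfectly $\varkappa$-normal; see \cite{AT}). It then remains to check that a dense subspace of a perfectly $\varkappa$-normal space is $z$-embedded in it; granting this, $D\times D$ is $z$-embedded in the multiplicatively $\mathbb R$-factorizable product $Z=\Pi\times\Pi$, and Proposition~\ref{p2.1} (with $X=Y=D$ and $X'=Y'=\Pi$) yields that $D\times D$ is multiplicatively $\mathbb R$-factorizable. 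Running the reductions backward, the group $G\times G$ is $\mathbb R$-factorizable, hence so is $G$, hence so is $G^\kappa$ for every cardinal~$\kappa$.

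It thus remains to prove: a dense subspace $D_0$ of a perfectly $\varkappa$-normal space $Z$ is $z$-embedded in $Z$. Let $W=f^{-1}\bigl((0,+\infty)\bigr)$ be a cozero set of $D_0$, where $f\colon D_0\to[0,+\infty)$ is continuous, and for rational $q>0$ put $W_q=f^{-1}\bigl((q,+\infty)\bigr)$, so that $W=\bigcup_{q}W_q$. Pick open sets $\widetilde W_q\subseteq Z$ with $\widetilde W_q\cap D_0=W_q$. Since $Z$ is perfectly $\varkappa$-normal, $\overline{\widetilde W_q}^{\,Z}$ is a zero set of $Z$, and so its interior $V_q=\operatorname{int}_Z\overline{\widetilde W_q}^{\,Z}$ is a cozero set of $Z$ (its complement is the closure of an open set, hence a zero set). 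By density, $V_q\cap D_0=\operatorname{int}_{D_0}\overline{W_q}^{\,D_0}$; this set contains $W_q$ and is contained in $\overline{W_q}^{\,D_0}\subseteq f^{-1}\bigl([q,+\infty)\bigr)\subseteq W$. Hence $\bigcup_q V_q$ is a countable union of cozero sets of $Z$, so a cozero set of $Z$, whose trace on $D_0$ equals $\bigcup_q W_q=W$. Thus $D_0$ is $z$-embedded in $Z$; applying this with $Z=\Pi\times\Pi$ and $D_0=D\times D$ finishes the argument.

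The crux, and the main obstacle, is the transfer of multiplicative $\mathbb R$-factorizability from $\Pi\times\Pi$ down to the dense subspace $D\times D$, i.e.\ the $z$-embedding step, which rests entirely on the external fact that $\mathbb R$-factorizable groups (in particular, products of Lindel\"of $\Sigma$-groups) are perfectly $\varkappa$-normal. Should one prefer to avoid this black box, the same $z$-embedding can instead be extracted from the classical dependence-on-countably-many-coordinates phenomenon for continuous real-valued functions on dense subspaces of products of Lindel\"of $\Sigma$-spaces, though the perfect-$\varkappa$-normality route is shorter. Everything else is routine bookkeeping with Theorems~\ref{t1.2} and~\ref{t3.2}, Tkachenko's theorem, and Proposition~\ref{p2.1}.
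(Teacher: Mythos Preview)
Your argument is correct and follows essentially the same route as the paper's: obtain perfect $\varkappa$-normality of the ambient product, $z$-embed the dense square into it, transfer multiplicative $\mathbb R$-factorizability down via Proposition~\ref{p2.1}, and finish with Theorem~\ref{t3.2}/Corollary~\ref{c3.1}. One correction: the blanket claim that \emph{every} $\mathbb R$-factorizable group is perfectly $\varkappa$-normal is not a theorem in \cite{AT} and should not be invoked; what the paper uses (Theorem~8.4.6 of \cite{AT}) is that any product of Lindel\"of $\Sigma$-groups is perfectly $\varkappa$-normal, which applies directly to your $Z=\Pi\times\Pi$, and the $z$-embedding of a dense subspace that you prove by hand is exactly Blair's Theorem~5.1 cited in the paper.
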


\begin{proof}
According to Theorem~8.4.6 of \cite{AT}, any product of Lindel\"of 
$\Sigma$-groups, in particular $X^\kappa\times X^\kappa$, is perfectly $\varkappa$-normal. Since $G^\kappa\times 
G^\kappa$ is homeomorphic to a dense subspace of $X^\kappa\times X^\kappa$, it follows by Theorem~5.1 of 
\cite{Blair} that $G^\kappa\times G^\kappa$ is $z$-embedded in $X^\kappa\times X^\kappa$. By 
Theorem~\ref{t3.2} $X^\kappa\times X^\kappa$ is multiplicatively $\mathbb R$-factorizable, because any product 
of Lindel\"of $\Sigma$-groups is $\mathbb R$-factorizable \cite[Theorem~8.1.14]{AT}. Therefore, so is 
$G^\kappa\times G^\kappa$. It remains to apply Corollary~\ref{c3.1}. 
\end{proof}

Proposition~\ref{p3.2} implies also the following statements.

\begin{corollary}
\label{c3.3}
Let $G$ be a topological semigroup. If the product $G\times G$ is multiplicatively $\mathbb R$-factorizable,
then the semigroup $G$ is $\mathbb R$-factorizable.
\end{corollary}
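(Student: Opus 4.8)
The plan is to reduce the statement to Proposition~\ref{p3.2}. A topological semigroup is precisely a topological $\Sigma$-algebra over the finite discrete signature $\Sigma_{\mathrm{sg}}=\{\cdot\}$ consisting of a single symbol of a binary operation (the semigroup multiplication); in particular, the arities of all operations in $\Sigma_{\mathrm{sg}}$ do not exceed $n=2$. Since the product $G\times G=G^2$ is assumed to be multiplicatively $\mathbb R$-factorizable, Proposition~\ref{p3.2} applied with $n=2$ will yield that $G$ is an $\mathbb R$-factorizable topological $\Sigma_{\mathrm{sg}}$-algebra in the sense of Definition~\ref{d3.1}.

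It then remains to pass from $\mathbb R$-factorizability as a $\Sigma_{\mathrm{sg}}$-algebra to $\mathbb R$-factorizability as a semigroup. Given a continuous function $f\colon G\to\mathbb R$, Definition~\ref{d3.1} provides a second-countable topological $\Sigma_{\mathrm{sg}}$-algebra $Y$, a continuous homomorphism $g\colon G\to Y$, and a continuous function $h\colon Y\to\mathbb R$ with $f=h\circ g$. Here $Y$ is a priori only a set with a continuous binary operation, with no associativity imposed; but the subspace $g(G)\subseteq Y$ is closed under the operation of $Y$ (because $g$ is a homomorphism), and associativity of $G$ transfers to $g(G)$ along the surjective homomorphism $g$. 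Hence $g(G)$, with the subspace topology and the restricted operation, is a second-countable topological semigroup, and replacing $Y$ by $g(G)$ and $h$ by its restriction furnishes exactly the data required by the definition of an $\mathbb R$-factorizable semigroup. (Alternatively, one may simply invoke the remark following Definition~\ref{d3.1}, where it is noted that for semigroups the two notions of $\mathbb R$-factorizability coincide for precisely this reason.)

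I do not anticipate a genuine obstacle here: the real content of the corollary is Proposition~\ref{p3.2} (ultimately Theorem~12 of~\cite{EAR}), and the only point requiring a word of justification is the routine bookkeeping that shows the target $\Sigma_{\mathrm{sg}}$-algebra may be taken to be an honest topological semigroup, which follows at once from the fact that a homomorphic image of a semigroup is a semigroup.
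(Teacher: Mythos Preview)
Your proposal is correct and matches the paper's approach exactly: the paper derives Corollary~\ref{c3.3} directly from Proposition~\ref{p3.2} (stating only that ``Proposition~\ref{p3.2} implies also the following statements''), and the passage from $\Sigma_{\mathrm{sg}}$-algebra $\mathbb R$-factorizability to semigroup $\mathbb R$-factorizability is precisely the observation made in the paragraph following Definition~\ref{d3.1}, which you correctly invoke.
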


Recall that a \emph{paratopological group} is a group with a topology with respect to which multiplication is 
continuous. We say that a paratopological group is $\mathbb R$-factorizable if, given any continuous function 
$f\colon G\to \mathbb R$, there exists a second-countable paratopological group $H$, a continuous 
homomorphism $h\colon G \to H$, and a continuous function $g\colon H\to \mathbb R$ for which $f = g \circ h$. 

\begin{corollary}
\label{c3.4}
Let $G$ be a paratopological group. If the product $G\times G$ is multiplicatively $\mathbb R$-factorizable,
then $G$ is $\mathbb R$-factorizable. 
\end{corollary}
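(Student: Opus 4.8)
The plan is to deduce Corollary~\ref{c3.4} from Proposition~\ref{p3.2} by regarding a paratopological group as a topological algebra over the finite signature $\Sigma=\{e,\cdot\}$, where $e$ is a nullary operation symbol (the identity) and $\cdot$ is the binary symbol of multiplication; inversion is deliberately excluded from $\Sigma$, since it need not be continuous in a paratopological group. The arities of the operations in $\Sigma$ are $0$ and $2$, so with $n=2$ the topological $\Sigma$-algebra $G$ satisfies the hypotheses of Proposition~\ref{p3.2} as soon as $G\times G=G^{2}$ is multiplicatively $\mathbb R$-factorizable. Proposition~\ref{p3.2} then yields, for every continuous $f\colon G\to\mathbb R$, a second-countable topological $\Sigma$-algebra $Y$, a continuous homomorphism $g\colon G\to Y$, and a continuous function $h\colon Y\to\mathbb R$ with $f=h\circ g$.

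The remaining, and main, step is to replace $Y$ by an honest second-countable paratopological group. I would pass to $H:=g(G)\subseteq Y$ equipped with the subspace topology. Since $g$ is a homomorphism of $\Sigma$-algebras, $H$ is closed under the multiplication of $Y$ and contains $g(e_{G})$; since $Y$ has continuous multiplication and $H$ carries the subspace topology, the corestriction of the restriction $H\times H\to Y$ to $H$ is continuous, so $H$ is a topological semigroup. Moreover $H$ is a group: $g(e_{G})$ is a two-sided identity because $g(e_{G})g(x)=g(e_{G}x)=g(x)=g(xe_{G})=g(x)g(e_{G})$ for all $x\in G$, and $g(x^{-1})$ is a two-sided inverse of $g(x)$ for the same reason. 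Being a subspace of the second-countable space $Y$, the space $H$ is second-countable. Thus $H$ is a second-countable paratopological group, $g$ with codomain corestricted to $H$ is a continuous homomorphism, $h|_{H}\colon H\to\mathbb R$ is continuous, and $f=(h|_{H})\circ g$; hence $G$ is $\mathbb R$-factorizable as a paratopological group.

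The only genuinely delicate point is the one just isolated: Proposition~\ref{p3.2} produces a factorization through a topological $\Sigma$-algebra, and one must check that this algebra — or, as above, its image — can be taken to be a paratopological group rather than an arbitrary set with a distinguished element and a continuous binary operation satisfying no axioms. This is precisely the ``a homomorphic image of a group is a group'' observation already invoked in the discussion following Definition~\ref{d3.1}; once made precise as above, nothing further is needed. The same argument with $\Sigma=\{\cdot\}$, using that a homomorphic image of a semigroup is a semigroup, proves Corollary~\ref{c3.3}.
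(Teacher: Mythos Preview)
Your proof is correct and follows essentially the same route as the paper. The paper invokes Corollary~\ref{c3.3} (i.e., Proposition~\ref{p3.2} with the signature $\{\cdot\}$) and then checks that the surjective image $H$ of $G$ is a group, hence a paratopological group; you apply Proposition~\ref{p3.2} directly with the slightly larger signature $\{e,\cdot\}$ and make the same passage to $H=g(G)$. The difference is purely cosmetic: including $e$ in the signature neither helps nor hurts, and in both cases the substantive point is exactly the ``homomorphic image of a group is a group'' observation you isolate.
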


\begin{proof}
Let $f\colon G\to \mathbb R$ be a continuous function. Since $G$ is a topological semigroup, by 
Corollary~\ref{c3.3} there exists a second-countable topological semigroup $H$, a continuous 
homomorphism $h\colon G \to H$, and a continuous function $g\colon H\to \mathbb R$ for which $f = g \circ h$. We 
can assume that $h$ is surjective.  Since $h(x\cdot y)=h(x)\cdot h(y)$, it follows that, for any $y\in H$ and any 
$x\in h^{-1}(y)$, we have $h(x^{-1})\cdot y = y\cdot h(x^{-1}) = h(e)$ and $h(e)\cdot y=y\cdot h(e)=y$. 
Therefore, $H$ is a group with identity element $h(e)$ and inversion $h(x)^{-1}=h(x^{-1})$ (recall that $h$ is a 
surjection). Thus, $H$ is a paratopological group and $h$ is a group homomorphism. 
\end{proof}

\begin{corollary}
\label{c3.5}
Let $G$ be a paratopological group. If the product $G\times G$ is Lindel\"of, then the paratopological 
group $G$ is $\mathbb R$-factorizable. 
\end{corollary}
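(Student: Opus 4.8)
The plan is to derive Corollary~\ref{c3.5} from Corollary~\ref{c3.4} in essentially one line, the only work being to observe that a Lindel\"of square makes the product multiplicatively $\mathbb R$-factorizable. First I would recall, as was already noted in the proof of Proposition~\ref{p2.1} (the implication $(6)\Rightarrow(5)$), that every Lindel\"of product of Tychonoff spaces is multiplicatively $\mathbb R$-factorizable: any cozero set in a product is a union of cozero rectangles, and Lindel\"ofness lets one extract a countable subfamily whose union is the same set, so condition~(2) of Proposition~\ref{p2.1} holds. Applying this to $X\times Y = G\times G$, we conclude that the product $G\times G$ is multiplicatively $\mathbb R$-factorizable.

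Next I would invoke Corollary~\ref{c3.4}: for a paratopological group $G$, if $G\times G$ is multiplicatively $\mathbb R$-factorizable, then $G$ is $\mathbb R$-factorizable as a paratopological group. Combining the two steps immediately yields the desired conclusion. I would phrase the proof as: ``Since the product $G\times G$ is Lindel\"of, it is multiplicatively $\mathbb R$-factorizable by Proposition~\ref{p2.1} (or by the remark in its proof that all Lindel\"of products are $\mathbb R$-factorizable). It remains to apply Corollary~\ref{c3.4}.''

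I do not anticipate a genuine obstacle here; the corollary is a packaging of two already-established facts. The only point requiring a moment's care is citing the right link in Proposition~\ref{p2.1}: the cleanest is to note that $G\times G$ being Lindel\"of gives condition~(6) of that proposition with $X' = Y' = G$ (a Lindel\"of product trivially $z$-embeds in itself), whence condition~(1), i.e.\ multiplicative $\mathbb R$-factorizability of $G\times G$. Everything else is a direct appeal to Corollary~\ref{c3.4}, so the proof will be two sentences long.

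\begin{proof}
Since the product $G\times G$ is Lindel\"of, it is multiplicatively $\mathbb R$-factorizable; indeed, $G\times G$ is trivially $z$-embedded in itself, so condition~(6) of Proposition~\ref{p2.1} holds with $X'=Y'=G$, and hence condition~(1) holds as well. It remains to apply Corollary~\ref{c3.4}.
\end{proof}
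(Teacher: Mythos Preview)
Your proof is correct and follows essentially the same approach as the paper: invoke Proposition~\ref{p2.1}(6) to conclude that the Lindel\"of square $G\times G$ is multiplicatively $\mathbb R$-factorizable, then apply Corollary~\ref{c3.4}. The paper's proof is the same two-step argument, stated even more tersely.
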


\begin{proof}
By Proposition~\ref{p2.1}\,(6) $G\times G$ is multiplicatively $\mathbb R$-factorizable, and by 
Corollary~\ref{c3.4} $G$ is $\mathbb R$-factorizable.
\end{proof}

\begin{corollary}
If $G$ is a topological group such that the group $G\times G$ is $\mathbb R$-factorizable and a topological 
group $H$ is homeomorphic to $G$, then the group $H\times H$ is $\mathbb R$-factorizable. 
\end{corollary}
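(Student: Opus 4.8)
The plan is to reduce this to Theorem~\ref{t3.2}, exploiting the fact that multiplicative $\mathbb R$-factorizability of a product is a property of the underlying topological spaces and their product topology, with no reference to any algebraic structure. There is essentially no obstacle beyond keeping track of which occurrences of ``$\mathbb R$-factorizable'' refer to the group notion and which to the multiplicative-product notion; all the content is carried by Theorem~\ref{t3.2}.

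First, since the group $G\times G$ is $\mathbb R$-factorizable, Theorem~\ref{t3.2} implies that the product $G\times G$ is multiplicatively $\mathbb R$-factorizable; that is, every continuous function $f\colon G\times G\to\mathbb R$ factors as $f=h\circ(g_1\times g_2)$ for some continuous maps $g_i\colon G\to M_i$ into second-countable spaces $M_i$ ($i=1,2$) and some continuous $h\colon M_1\times M_2\to\mathbb R$. This factorization statement involves only the topological space $G$ and the product topology on $G\times G$.

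Next, let $\theta\colon G\to H$ be a homeomorphism. Then $\theta\times\theta\colon G\times G\to H\times H$ is a homeomorphism respecting the product decomposition. Given a continuous $\varphi\colon H\times H\to\mathbb R$, apply the previous step to $f=\varphi\circ(\theta\times\theta)$ to obtain second-countable spaces $M_1,M_2$, continuous maps $g_i\colon G\to M_i$, and a continuous $h\colon M_1\times M_2\to\mathbb R$ with $\varphi\circ(\theta\times\theta)=h\circ(g_1\times g_2)$. Then $\varphi=h\circ\bigl((g_1\circ\theta^{-1})\times(g_2\circ\theta^{-1})\bigr)$, and each $g_i\circ\theta^{-1}\colon H\to M_i$ is continuous with second-countable target. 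Hence the product $H\times H$ is multiplicatively $\mathbb R$-factorizable.

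Finally, apply Theorem~\ref{t3.2} once more, this time to the group $H$: since the product $H\times H$ is multiplicatively $\mathbb R$-factorizable, the group $H\times H$ is $\mathbb R$-factorizable. (Recall that the one direction of Theorem~\ref{t3.2} used here rests on Corollary~\ref{c3.1}, hence on Proposition~\ref{p3.2}.)
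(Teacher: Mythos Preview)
Your proof is correct and follows exactly the route the paper intends: pass from group $\mathbb R$-factorizability of $G\times G$ to multiplicative $\mathbb R$-factorizability via Theorem~\ref{t3.2}, transfer the latter along the homeomorphism $\theta\times\theta$ (this being a purely topological notion), and then apply Theorem~\ref{t3.2} in the other direction to conclude that the group $H\times H$ is $\mathbb R$-factorizable. This is precisely the argument announced after Theorem~D in the introduction.

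Note that the paper's printed proof of this corollary is garbled: it starts from the wrong hypothesis (``productively $\mathbb R$-factorizable''), never mentions $H$, and concludes only that $G$ is $\mathbb R$-factorizable. Your write-up is what the proof should be.
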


\begin{proof}
If the product $G\times G$ is productively $\mathbb R$-factorizable, then the group $G\times G$ is 
$\mathbb R$-factorizable by Theorem~\ref{t3.2}. The group $G$ is the image of $G\times G$ under the natural 
projection homomorphism, which is continuous and open. Therefore, $G$ is $\mathbb R$-factorizable by 
Theorem~\ref{t1.2}. 
\end{proof}

\section{Weird $\mathbb R$-Factorizable Groups}
\label{s4}

Theorems~\ref{t2.1} and \ref{t3.1} have the following immediate consequence. 

\begin{theorem}
\label{t4.1}
If $G$ is a weird $\mathbb R$-factorizable group, $H$ is a topological group, and the group $G\times H$ is 
$\mathbb R$-factorizable, then 
\begin{enumerate}
\item[\rm(1)]
$H^\omega$ is hereditarily Lindel\"of and hereditarily separable; 
\item[\rm(2)]
if $w(H) \le \omega_1$, then $H$ is second-countable; 
\item[\rm(3)] under $\mathrm{CH}$, $H$ is second-countable. 
\end{enumerate}
\end{theorem}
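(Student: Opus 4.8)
The plan is to deduce Theorem~\ref{t4.1} directly from Theorems~\ref{t2.1} and~\ref{t3.1}, exactly as the sentence preceding the statement promises. First I would observe that since $G$ is \emph{weird} $\mathbb R$-factorizable, it is by definition not pseudo-$\aleph_1$-compact; that is, $G$ carries a discrete (equivalently, locally finite) family of open sets of cardinality $\omega_1$. Thus the role of the ``non-pseudo-$\aleph_1$-compact factor'' $Y$ in Theorem~\ref{t2.1} will be played by $G$, and the role of the other factor $X$ by $H$.

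The second step is to pass from the hypothesis that the \emph{group} $G\times H$ is $\mathbb R$-factorizable to the statement that the \emph{product} $G\times H$ (of topological spaces) is multiplicatively $\mathbb R$-factorizable. This is precisely the implication $(1)\Rightarrow(2)$ of Theorem~\ref{t3.1}: if $G\times H$ is an $\mathbb R$-factorizable group, then in particular $G\times H$ is a multiplicatively $\mathbb R$-factorizable product of spaces (and, incidentally, $G$ and $H$ are individually $\mathbb R$-factorizable, though that is not needed here). So now we have a product $X\times Y = H\times G$ of spaces which is $\mathbb R$-factorizable in the product sense, with $Y=G$ not pseudo-$\aleph_1$-compact.

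The third step is simply to quote Theorem~\ref{t2.1} with this $X$ and $Y$: conclusion~(1) gives that $H^\omega = X^\omega$ is hereditarily Lindel\"of and hereditarily separable; conclusion~(2) gives that $w(H)\le\omega_1$ forces $H$ second-countable; and conclusion~(3) gives, under $\mathrm{CH}$, that $H$ is second-countable. These are exactly assertions~(1), (2), (3) of Theorem~\ref{t4.1}, so the proof is complete.

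There is really no main obstacle here — the content has all been established in Sections~\ref{s2} and~\ref{s3}, and this theorem is a packaging statement. The only point requiring a moment's care is making sure the two ``$\mathbb R$-factorizable'' notions are not conflated: the hypothesis concerns $G\times H$ as a \emph{topological group}, whereas Theorem~\ref{t2.1} is about $\mathbb R$-factorizability of a \emph{product of spaces}, and the bridge between them is exactly Theorem~\ref{tB} (as repackaged in Theorem~\ref{t3.1}). Once that bridge is invoked, the rest is immediate.
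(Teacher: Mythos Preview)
Your proposal is correct and matches the paper's own treatment exactly: the paper gives no separate proof of Theorem~\ref{t4.1}, merely stating that it is an immediate consequence of Theorems~\ref{t2.1} and~\ref{t3.1}. Your write-up spells out precisely that deduction, including the one point worth noting---that Theorem~\ref{t3.1} (equivalently Theorem~\ref{tB}) is what converts group $\mathbb R$-factorizability of $G\times H$ into multiplicative $\mathbb R$-factorizability of the product so that Theorem~\ref{t2.1} applies with $X=H$ and $Y=G$.
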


\subsection{Squares of $\mathbb R$-Factorizable Groups}

Theorems~\ref{t3.2} and~\ref{t2.1} imply the following statement.

\begin{theorem}
\label{t4.2}
If $G$ is a topological group and the group $G\times G$ is $\mathbb R$-factorizable, then $G$ is 
pseudo-$\aleph_1$-compact.
\end{theorem}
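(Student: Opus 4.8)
The plan is to combine Theorem~\ref{t3.2}, which converts the group-theoretic hypothesis into a statement about a product of spaces, with Theorem~\ref{t2.1}, which extracts strong covering properties from an $\mathbb R$-factorizable product one of whose factors fails to be pseudo-$\aleph_1$-compact. First I would argue by contradiction: assume the group $G\times G$ is $\mathbb R$-factorizable but $G$ is \emph{not} pseudo-$\aleph_1$-compact. By Theorem~\ref{t3.2}, the $\mathbb R$-factorizability of the group $G\times G$ is equivalent to the multiplicative $\mathbb R$-factorizability of the product $G\times G$, so the product $G\times G$ is $\mathbb R$-factorizable in the sense of products of spaces.

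Next I would apply Theorem~\ref{t2.1} with $X=Y=G$. Its hypotheses are met: the product $G\times G$ is $\mathbb R$-factorizable, and the factor $Y=G$ is, by assumption, not pseudo-$\aleph_1$-compact. Conclusion~(1) of that theorem then yields that $G^\omega$ is hereditarily Lindel\"of (and hereditarily separable).

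Finally I would descend from $G^\omega$ to $G$. Since $G$ is homeomorphic to a subspace of $G^\omega$ (for instance to $G\times\prod_{n\ge 1}\{e\}$) and the hereditary Lindel\"of property passes to subspaces, $G$ is hereditarily Lindel\"of, in particular Lindel\"of. But every Lindel\"of space is pseudo-$\aleph_1$-compact: given a locally finite family of nonempty open sets, cover the space by open sets each of which meets only finitely many members of the family, extract a countable subcover, and note that then only countably many members of the family can be nonempty. This contradicts the assumption that $G$ is not pseudo-$\aleph_1$-compact, and the theorem follows.

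I do not expect a genuine obstacle, since the substantive work has already been carried out in Theorems~\ref{t2.1} and~\ref{t3.2}; the only points requiring a little care are (i) invoking Theorem~\ref{t3.2} to pass between the group notion and the product notion of $\mathbb R$-factorizability for $G\times G$, and (ii) the elementary step from ``$G^\omega$ hereditarily Lindel\"of'' to ``$G$ pseudo-$\aleph_1$-compact,'' which rests on the facts that subspaces (and continuous images) of Lindel\"of spaces are Lindel\"of and that Lindel\"ofness implies pseudo-$\aleph_1$-compactness.
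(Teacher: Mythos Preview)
Your proof is correct and follows exactly the paper's argument: apply Theorem~\ref{t3.2} to pass to multiplicative $\mathbb R$-factorizability, then Theorem~\ref{t2.1} with $X=Y=G$ to get $G^\omega$ hereditarily Lindel\"of, and derive the contradiction from Lindel\"of $\Rightarrow$ pseudo-$\aleph_1$-compact. One small slip in your closing remarks: subspaces of Lindel\"of spaces need not be Lindel\"of, but your actual argument correctly uses that the \emph{hereditary} Lindel\"of property passes to subspaces, so nothing is affected.
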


Indeed, by Theorem~\ref{t3.2} $G\times G$ is multiplicatively $\mathbb R$-factorizable and by 
Theorem~\ref{t2.1} if $G$ is not pseudo-$\aleph_1$-compact, then it must be hereditarily Lindel\"of, which 
is impossible.   

\begin{corollary}
\label{c4.1}
The square of a weird $\mathbb R$-factorizable group is never $\mathbb R$-factorizable.
\end{corollary}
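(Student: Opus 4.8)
The plan is to derive this immediately from Theorem~\ref{t4.2} by contraposition. Let $G$ be a weird $\mathbb R$-factorizable group; by definition $G$ is $\mathbb R$-factorizable but fails to be pseudo-$\aleph_1$-compact. First I would invoke Theorem~\ref{t4.2}, which asserts that whenever the group $G\times G$ is $\mathbb R$-factorizable, $G$ must be pseudo-$\aleph_1$-compact. Since the given $G$ is not pseudo-$\aleph_1$-compact, it follows that $G\times G$ cannot be $\mathbb R$-factorizable, which is exactly the assertion of the corollary.

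For completeness I would also recall the chain behind Theorem~\ref{t4.2}, since the corollary is essentially a repackaging of it: if the group $G\times G$ were $\mathbb R$-factorizable, then by Theorem~\ref{t3.2} the product $G\times G$ would be multiplicatively $\mathbb R$-factorizable; applying Theorem~\ref{t2.1} with $X=Y=G$ (using $G$ as the non-pseudo-$\aleph_1$-compact factor) would then force $G^\omega$, and in particular $G$ itself, to be hereditarily Lindel\"of. Since a hereditarily Lindel\"of space is pseudo-$\aleph_1$-compact, this contradicts the weirdness of $G$.

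There is essentially no obstacle here: all the real content is carried by Theorems~\ref{t3.2} and~\ref{t2.1} (equivalently, by Theorem~\ref{t4.2}), and the corollary is a one-line contrapositive. The only routine point to verify is that hereditary Lindel\"ofness implies pseudo-$\aleph_1$-compactness, which is clear because in a hereditarily Lindel\"of space any discrete family of nonempty open sets is an open cover of its (Lindel\"of) union admitting no proper subcover, hence must be countable. Writing the proof thus amounts to stating the contrapositive of Theorem~\ref{t4.2} together with this one-sentence remark.
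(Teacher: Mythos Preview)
Your proposal is correct and matches the paper's approach exactly: the corollary is simply the contrapositive of Theorem~\ref{t4.2}, and the paper likewise justifies Theorem~\ref{t4.2} via Theorems~\ref{t3.2} and~\ref{t2.1}, noting that a non-pseudo-$\aleph_1$-compact $G$ cannot be hereditarily Lindel\"of.
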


\subsection{$\mathbb R$-Factorizable Groups of Uncountable Pseudocharacter}

\begin{theorem}
\label{t4.3}
Any $\mathbb R$-factorizable group of uncountable pseudocharacter is pseudo-$\aleph_1$-compact. 
\end{theorem}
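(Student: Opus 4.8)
The plan is to prove the contrapositive: if an $\mathbb R$-factorizable group $G$ is \emph{not} pseudo-$\aleph_1$-compact, then $\psi(G)\le\omega$, so that $G$ is submetrizable by Theorem~\ref{t1.1} and, in particular, cannot have uncountable pseudocharacter. I would first fix a discrete family $\{U_\alpha:\alpha<\omega_1\}$ of nonempty cozero sets in $G$. After discarding one member of the family and replacing each of the rest by a smaller cozero neighbourhood of a chosen point $x_\alpha\in U_\alpha$, I may assume $e\notin\overline{U_\alpha}$ for every $\alpha$; I also fix continuous $\phi_\alpha\colon G\to[0,1]$ with $U_\alpha=\phi_\alpha^{-1}(\mathbb R\setminus\{0\})$ and $\phi_\alpha(x_\alpha)=1$. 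By Remark~\ref{r1.3} the set $Q=\{x_\alpha:\alpha<\omega_1\}$ is a $C$-embedded copy of $D(\omega_1)$, which provides a large reservoir of continuous real-valued functions on $G$ with prescribed (for instance, injective) values on~$Q$.

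Next I would record the two features of $G$ that $\mathbb R$-factorizability supplies. First, $G$ is $\omega$-narrow \cite[Proposition~8.1.3]{AT}. Second, by the standard theory of $\mathbb R$-factorizable groups every continuous map of $G$ into a second-countable space — hence, assembling by a diagonal, every countable family of continuous real-valued functions at once — factors through a continuous homomorphism onto a second-countable group; equivalently, it is constant on the cosets of a closed invariant $G_\delta$ subgroup $N$ with $G/N$ second-countable. Thus $\psi(G)\le\omega$ exactly when some such subgroup, obtained from countably many functions, is trivial. Assume toward a contradiction that $\psi(G)>\omega$; then every closed invariant $G_\delta$ subgroup obtained from countably many continuous functions is nontrivial.

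Under this assumption I would build, by transfinite recursion of length $\omega_1$, continuous functions and a strictly decreasing chain $\{N^{(\alpha)}:\alpha<\omega_1\}$ of closed invariant $G_\delta$ subgroups with $G/N^{(\alpha)}$ second-countable: at stage $\alpha$, pick a continuous function not constant on $N^{(\alpha)}$ and absorb it, together with $\phi_\alpha$, into $N^{(\alpha+1)}$; at limit stages intersect (which keeps the subgroups $G_\delta$, the index being countable). The recursion never terminates prematurely, since $\{e\}$ is not a $G_\delta$. Because $\phi_\beta$ is constant on cosets of $N^{(\beta+1)}$, hence of every $N^{(\gamma)}$ with $\gamma>\beta$, each $U_\beta$ is saturated for every such $N^{(\gamma)}$; consequently the cozero set $\bigcup_{\beta<\gamma}U_\beta$ and the zero set $F_\gamma=G\setminus\bigcup_{\beta<\gamma}U_\beta$ are $N^{(\gamma)}$-saturated and descend to an open set and a closed set of the second-countable group $G/N^{(\gamma)}$; and since $x_\gamma\in F_\gamma\setminus F_{\gamma+1}$ by disjointness of the family, the chain $\{F_\gamma:\gamma<\omega_1\}$ is strictly decreasing in $G$. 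The aim is then to collapse this configuration into a single second-countable — hence hereditarily Lindel\"of and hereditarily separable — continuous image of $G$ carrying an uncountable strictly monotone chain of closed sets (or an uncountable discrete family of open sets), which is impossible.

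I expect this last collapse to be the genuine obstacle. The straightforward route is blocked: being $\omega$-narrow, $G$ admits no uncountable \emph{uniformly} discrete family, so $\{U_\alpha\}$ cannot be transported wholesale to any second-countable quotient (where its existence would be absurd), and a single second-countable quotient can record only countably much of the $\omega_1$-indexed data. The argument must therefore spend the hypothesis $\psi(G)>\omega$ in a more essential way — for example by a pressing-down or $\Delta$-system-style bookkeeping along a club in $\omega_1$ arranging that the strictly decreasing $\omega_1$-chain of zero sets reflects into one fixed quotient $G/N^{(\gamma_0)}$, or by running the analogue of the reflection argument of Proposition~\ref{p2.6} after passing to a countable power of a quotient — thereby contradicting hereditary Lindel\"ofness. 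Uncountable pseudocharacter is exactly what makes such reflection possible; without it $G$ itself absorbs the whole $\omega_1$-indexed structure and no contradiction is available, consistently with Question~\ref{q1} being open.
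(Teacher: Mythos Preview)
Your proposal is not a proof: you yourself identify the final ``collapse'' step as the genuine obstacle and offer only speculation (pressing-down, $\Delta$-systems, analogues of Proposition~\ref{p2.6}) about how it might go. None of those sketches is carried out, and the machinery you build beforehand---the strictly decreasing chain $\{N^{(\alpha)}\}$ of closed invariant $G_\delta$ subgroups---does not by itself deliver a single second-countable quotient that sees an uncountable strictly monotone chain. Indeed, your zero sets $F_\gamma=G\setminus\bigcup_{\beta<\gamma}U_\beta$ form a strictly decreasing $\omega_1$-chain for \emph{any} non-pseudo-$\aleph_1$-compact group, regardless of pseudocharacter; the hypothesis $\psi(G)>\omega$ enters your argument only to keep the recursion on the $N^{(\alpha)}$ alive, and that recursion is not tied to the $F_\gamma$ in any way that produces the collapse.

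The paper supplies exactly the missing idea, and it is a coding trick rather than a reflection argument. Lemma~\ref{l4.1} shows that if $\{F_\alpha:\alpha<\omega_1\}$ are zero sets sitting inside a discrete family $\{U_\alpha\}$ of cozero sets, then \emph{all} of them can be recovered from a single continuous homomorphism to a second-countable group. The device is to pick a countable family $\{Q_n:n<\omega\}$ of subsets of $\omega_1$ that separates points (e.g., a countable base for a second-countable topology on $\omega_1$), and for each $n$ form the single continuous function $\varphi_n=\sum_{\alpha\in Q_n}f_\alpha$, where $f_\alpha$ witnesses both $U_\alpha$ and $F_\alpha$; discreteness of $\{U_\alpha\}$ makes $\varphi_n$ continuous. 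Factoring each $\varphi_n$ through a second-countable quotient and taking the diagonal of these countably many homomorphisms yields one homomorphism $h$ with $h^{-1}(h(F_\alpha))=F_\alpha$ and $h(F_\alpha)$ closed for every $\alpha$. The uncountable pseudocharacter is then spent, not on a chain of subgroups, but on constructing a strictly decreasing chain $Z_\alpha$ of $G_\delta$ zero neighbourhoods of $e$ and translating them via $F_\alpha=x_\alpha Z_\alpha$ into the discrete family so that Lemma~\ref{l4.1} applies; the images $h(Z_\alpha)=h(x_\alpha)^{-1}h(F_\alpha)$ then form an uncountable strictly decreasing chain of closed sets in a second-countable group, which is impossible.
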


\begin{corollary}
\label{c4.2}
Any weird $\mathbb R$-factorizable group has countable pseudocharacter, i.e., is submetrizable.
\end{corollary}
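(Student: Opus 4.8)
Since Corollary~\ref{c4.2} follows immediately from Theorem~\ref{t4.3} together with Theorem~\ref{t1.1}, the plan is to prove Theorem~\ref{t4.3} and then read off the corollary: a weird $\mathbb R$-factorizable group~$G$ is, by definition, $\mathbb R$-factorizable and not pseudo-$\aleph_1$-compact, so the contrapositive of Theorem~\ref{t4.3} gives $\psi(G)\le\omega$, and Theorem~\ref{t1.1} then yields that $G$ is submetrizable. Thus the real content is Theorem~\ref{t4.3}, equivalently (in contrapositive form): an $\mathbb R$-factorizable group that is not pseudo-$\aleph_1$-compact has countable pseudocharacter.

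I would prove this by contradiction, assuming in addition that $\psi(G)>\omega$. Fix a discrete family $\{U_\alpha:\alpha<\omega_1\}$ of nonempty cozero sets in~$G$ and points $x_\alpha\in U_\alpha$; by Remark~\ref{r1.3} the discrete set $\{x_\alpha:\alpha<\omega_1\}$ is $C$-embedded in~$G$. Two observations set the stage. First, being $\mathbb R$-factorizable, $G$ is $\omega$-narrow, so by Guran's theorem it embeds in a product of second-countable groups; hence for any continuous homomorphism $p$ of $G$ into a second-countable group the quotient $G/\ker p$ injects continuously into a second-countable group, so it is submetrizable, i.e.\ has countable pseudocharacter by Theorem~\ref{t1.1}, and therefore $\ker p$ is a $G_\delta$ in~$G$. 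Second, no such $\ker p$ can be discrete: if it were, then $\{e\}$ would be the intersection of an open neighbourhood of~$e$ isolating $e$ in $\ker p$ with the $G_\delta$ set $\ker p$, contradicting $\psi(G)>\omega$; so $e$ is non-isolated in $\ker p$ for every such~$p$. Using these, I would recursively construct, for $\beta<\omega_1$, continuous homomorphisms $h_\beta\colon G\to H_\beta$ to second-countable groups and bump functions $f_\beta$ with $\operatorname{supp}f_\beta\subseteq U_\beta$, so that $F=\sum_{\beta<\omega_1}f_\beta$ is a well-defined continuous function on~$G$ (disjointness and local finiteness of $\{U_\alpha\}$, cf.\ Remark~\ref{r1.3}). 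At stage~$\beta$, letting $N_\beta=\ker\bigl(\bigdiag_{\gamma<\beta}h_\gamma\colon G\to\prod_{\gamma<\beta}H_\gamma\bigr)$ — a homomorphism into a second-countable group, as $\beta$ is countable — the non-discreteness above lets me choose $g_\beta\in N_\beta\cap(x_\beta^{-1}U_\beta)$ with $g_\beta\neq e$, so that $x_\beta g_\beta\in U_\beta$; I take $f_\beta$ with $f_\beta(x_\beta)=0$ and $f_\beta(x_\beta g_\beta)=1$, and I take $h_\beta$ from $\mathbb R$-factorizability applied to a continuous function separating $e$ from~$g_\beta$, so that $g_\beta\notin\ker h_\beta$ and hence $N_{\beta+1}\subsetneq N_\beta$.

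Finally I would $\mathbb R$-factorize $F$ itself as $F=g\circ h$ with $h\colon G\to H$ continuous into a second-countable group~$H$; since $F(x_\beta g_\beta)\neq F(x_\beta)$ we get $g_\beta\notin\ker h$ for every~$\beta$, and the goal is to leverage this into a strictly decreasing $\omega_1$-chain of closed subsets of the hereditarily Lindel\"of space~$H$ (using $N_\beta=h^{-1}(h(N_\beta))$ and closedness of the images~$h(N_\beta)$) — an impossibility. This last move is the main obstacle: a single bump per stage only yields $g_\beta\notin\ker h$, which is too weak by itself, so the recursion must instead distribute all countably many coordinate functions of each~$h_\beta$ over a countable sub-block of the cozero family $\{U_\alpha\}$ and choose the translations compatibly, so that factoring $F$ forces $h$ to refine each~$h_\beta$ on~$N_\beta$ and thereby propagates the strict inclusions $N_{\beta+1}\subsetneq N_\beta$ into~$H$. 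Getting this bookkeeping to work — and checking that the resulting chain in~$H$ really is strictly decreasing — is where the work lies; everything else (Guran's theorem, the $C$-embedded selector set, the $G_\delta$/non-discreteness dichotomy for kernels, hereditary Lindel\"ofness of second-countable spaces) has been prepared above. The resulting contradiction establishes Theorem~\ref{t4.3}, whence Corollary~\ref{c4.2}.
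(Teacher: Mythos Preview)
Your overall strategy---produce a strictly decreasing $\omega_1$-chain of closed sets inside a second-countable group and invoke hereditary Lindel\"ofness---is exactly the paper's. Your preliminary observations (kernels of homomorphisms to second-countable groups are $G_\delta$ and, under $\psi(G)>\omega$, non-discrete) are correct and useful. The gap is precisely where you flag it, and it is genuine rather than just bookkeeping.

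The difficulty is this: from $F=g\circ h$ you get $g_\beta\notin\ker h$, but to make $h(N_\beta)$ strictly decreasing you need $g_\beta\notin N_{\beta+1}\cdot\ker h$, and for the chain to even consist of closed sets you need each $h(N_\beta)$ closed in $H$. Neither follows. Your proposed fix---spreading the coordinate functions of each $h_\beta$ over a countable block of the $U_\alpha$'s---only yields local information: a bump supported in $U_\alpha$ with value $h_\beta^{(n)}(x_\alpha^{-1}x)$ on a core $x_\alpha W_\alpha$ forces, at best, $(\ker h)\cap W_\alpha\subset\ker h_\beta^{(n)}$. Intersecting over $n$ gives $(\ker h)\cap W^{(\beta)}\subset\ker h_\beta$ for some $G_\delta$ neighbourhood $W^{(\beta)}$ of $e$, which is far from $\ker h\subset N_{\beta+1}$ and does not give saturation $N_\beta=h^{-1}(h(N_\beta))$ or closedness of $h(N_\beta)$. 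The subgroups $N_\beta$ are simply too large to be captured by functions supported inside the discrete family.

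The paper sidesteps this by abandoning kernels in favour of \emph{small} zero sets. One builds, by recursion using $\psi(G)>\omega$, strictly decreasing zero sets $Z_\alpha\ni e$ with $Z_\alpha\subset x_\alpha^{-1}U_\alpha$, so that $F_\alpha:=x_\alpha Z_\alpha\subset U_\alpha$. The key device is Lemma~\ref{l4.1}: choose countably many sets $Q_n\subset\omega_1$ with $\bigcap\{Q_n:\alpha\in Q_n\}=\{\alpha\}$ (any countable separating family works), set $\varphi_n=\sum_{\alpha\in Q_n}f_\alpha$ (well defined by discreteness), and factor each $\varphi_n$ through a second-countable group; the diagonal $h$ then makes every $F_\alpha$ saturated with closed image, because $F_\alpha=\bigcap\{\varphi_n^{-1}(1):\alpha\in Q_n\}$. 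Since $h$ is a homomorphism, the translates $Z_\alpha=x_\alpha^{-1}F_\alpha$ inherit saturation and closed images, and now $\{h(Z_\alpha):\alpha<\omega_1\}$ is the strictly decreasing chain you wanted. The moral: encode the $\omega_1$ objects directly as zero sets sitting inside the discrete family, rather than trying to force a single $h$ to refine $\omega_1$ many auxiliary homomorphisms.
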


The proof of Theorem~\ref{t4.3} is based on the following lemma. 

\begin{lemma}
\label{l4.1}
Suppose given a discrete family $\{U_\alpha:\alpha<\omega_1\}$ of cozero sets in a topological group $G$ and 
nonempty zero sets $F_\alpha\subset U_\alpha$, $\alpha<\omega_1$. Then there exists a 
second-countable group $H$ and a continuous homomorphism $h\colon G\to H$ such that $h(F_\alpha)$ is closed and 
$h^{-1}(h(F_\alpha))=F_\alpha$ for each $\alpha<\omega_1$. 
\end{lemma}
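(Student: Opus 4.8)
The plan is to pack the whole configuration into one continuous map $g\colon G\to\mathbb R^2$ whose point fibres are exactly the sets $F_\alpha$, and then to promote $g$ to a continuous homomorphism into a second-countable group by means of the $\mathbb R$-factorizability of $G$; the discreteness of $\{U_\alpha\}$ is exactly what makes the first step go through.

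First I would fix, for each $\alpha<\omega_1$, continuous functions $\chi_\alpha,\sigma_\alpha\colon G\to[0,1]$ with $\chi_\alpha\equiv1$ on $F_\alpha$ and $\chi_\alpha\equiv0$ on $G\setminus U_\alpha$, and with $\sigma_\alpha^{-1}(0)=F_\alpha$ and $\sigma_\alpha\equiv1$ on $G\setminus U_\alpha$. Such functions exist because $F_\alpha$ and $G\setminus U_\alpha$ are disjoint zero sets and hence completely separated; for instance one may take $\sigma_\alpha=\min\bigl(1,\,a_\alpha+(1-\chi_\alpha)\bigr)$, where $a_\alpha\colon G\to[0,1]$ has $a_\alpha^{-1}(0)=F_\alpha$. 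Fix also an injection $\rho\colon\omega_1\to\mathbb R$ (which exists since $\omega_1\le2^\omega$) and put $p_\alpha:=(\rho(\alpha),0)\in\mathbb R^2$. Since $\{U_\alpha\}$ is discrete, every point of $G$ has a neighbourhood meeting at most one $U_\alpha$; on such a neighbourhood only a single $\chi_\alpha$ can be nonzero and only a single $\sigma_\alpha$ can differ from $1$. Hence the functions
\[
f:=\sum_{\alpha<\omega_1}\rho(\alpha)\,\chi_\alpha\colon G\to\mathbb R
\qquad\text{and}\qquad
z:=\inf_{\alpha<\omega_1}\sigma_\alpha\colon G\to[0,1]
\]
are well defined and continuous (locally $f$ is a single summand or $0$, and $z$ is a single $\sigma_\alpha$ or $1$), and by construction $f\equiv\rho(\alpha)$ on $F_\alpha$ while $z^{-1}(0)=\bigcup_{\alpha<\omega_1}F_\alpha$.

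Next I would set $g:=(f,z)\colon G\to\mathbb R^2$ and record that $g(F_\alpha)=\{p_\alpha\}$ and $g^{-1}(p_\alpha)=F_\alpha$ for every $\alpha$. The first equality is immediate. For the second, if $g(x)=p_\alpha$, then $z(x)=0$, so $x\in F_\beta$ for some $\beta$, whence $f(x)=\rho(\beta)$; comparing first coordinates gives $\rho(\beta)=\rho(\alpha)$, i.e. $\beta=\alpha$, so $x\in F_\alpha$, and the reverse inclusion is clear. Thus $g$ already realizes the required separation at the level of continuous maps.

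Finally I would invoke the $\mathbb R$-factorizability of $G$: the coordinate functions factor as $f=f'\circ h_1$ and $z=z'\circ h_2$, where $h_i\colon G\to H_i$ are continuous homomorphisms to second-countable groups and $f'\colon H_1\to\mathbb R$, $z'\colon H_2\to\mathbb R$ are continuous. Let $h_0\colon G\to H_1\times H_2$ be the map $x\mapsto(h_1(x),h_2(x))$, let $H:=h_0(G)$, which is a subgroup of a second-countable group and hence itself a second-countable group, and let $h\colon G\to H$ be $h_0$ with codomain restricted to $H$. Then $g=\Phi\circ h$, where $\Phi\colon H\to\mathbb R^2$ is the restriction of $f'\times z'$, so $h^{-1}(\Phi^{-1}(p_\alpha))=g^{-1}(p_\alpha)=F_\alpha$ for each $\alpha$. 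Since $h$ maps onto $H$, this yields $h(F_\alpha)=\Phi^{-1}(p_\alpha)$, which is closed in $H$, and $h^{-1}(h(F_\alpha))=F_\alpha$, as required. The steps needing care are routine: the continuity of the transfinite expressions $f$ and $z$ (the single genuine use of discreteness of $\{U_\alpha\}$) and the existence of $\chi_\alpha$ and $\sigma_\alpha$. The one indispensable ingredient is the last step — turning the continuous map $g$ into a homomorphism into a second-countable group — which is exactly where the $\mathbb R$-factorizability of $G$ enters.
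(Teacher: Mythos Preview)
Your proof is correct and is in fact tidier than the paper's. Both arguments hinge on the same mechanism: encode the index $\alpha<\omega_1$ by countably much real-valued data, use discreteness of $\{U_\alpha\}$ to glue this data into globally continuous functions on $G$, and then invoke $\mathbb R$-factorizability to convert those functions into a homomorphism to a second-countable group. The difference lies only in the encoding. The paper fixes a countable point-separating family $\{Q_n:n\in\omega\}$ of subsets of $\omega_1$ (equivalently, an injection $\omega_1\hookrightarrow 2^\omega$), forms the functions $\varphi_n=\sum_{\alpha\in Q_n} f_\alpha$ with $f_\alpha^{-1}(1)=F_\alpha$ and $f_\alpha^{-1}(\mathbb R\setminus\{0\})=U_\alpha$, factors each $\varphi_n$ through a second-countable group $H_n$, and takes $H$ to be the image of the diagonal homomorphism into $\prod_n H_n$; closedness of $h(F_\alpha)$ is then read off from $F_\alpha=\bigcap\{\varphi_n^{-1}(1):\alpha\in Q_n\}$. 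You instead pick an injection $\rho\colon\omega_1\hookrightarrow\mathbb R$ and collapse everything into a single map $g=(f,z)\colon G\to\mathbb R^2$ whose point-fibres over the $p_\alpha$ are exactly the $F_\alpha$; one application of $\mathbb R$-factorizability to each coordinate then suffices. Your route is shorter (two functions rather than a countable family) and makes the role of the separation step more transparent; the paper's route is perhaps more suggestive of how one might try to push the argument to larger index sets, but for the lemma as stated there is no real advantage. One small remark: like the paper, you tacitly use that $G$ is $\mathbb R$-factorizable, an assumption not written in the lemma's statement but clearly intended from the context of Theorem~4.3.
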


\begin{proof}
First, we choose sets $Q_n\subset \omega_1$, $n\in \omega$, so that, for every $\alpha\in \omega_1$, 
$$
\bigcap\{Q_n: \alpha\in Q_n\}=\{\alpha\}.
$$ 
As such sets we can take any countable base of any second-countable topology on $\omega_1$. Let 
$$ 
S_n = \bigcup\{F_\alpha: \alpha\in Q_n\},\quad n\in \omega.
$$
Since all $F_\alpha$ are pairwise disjoint, it follows that, for each $\alpha<\omega_1$, $F_\alpha\cap S_n\notin 
\varnothing$ if and only if $\alpha\in Q_n$, in which case $F_\alpha\subset S_n$. Moreover, if $\beta\ne \alpha$, 
then there exists an $n\in \omega$ such that $\alpha \in Q_n$ and $\beta\notin Q_n$, that is, $F_\alpha\subset  
S_n$ and $F_\beta\cap S_n=\varnothing$. Therefore, 
$$
F_\alpha = \bigcap \{S_n: \alpha\in Q_n\}.
$$ 

For each $\alpha<\omega_1$, we fix a continuous function $f_\alpha\colon G\to \mathbb R$ such that 
$f_\alpha^{-1}(\mathbb R\setminus \{0\})= U_\alpha$ and $f_\alpha^{-1}(1)=F_\alpha$. We set 
$$ 
\varphi_n=\sum_{\alpha\in Q_n} f_\alpha\colon G\to \mathbb R, \qquad n\in \omega. 
$$ 
Every function $\varphi_n$ is well defined and continuous, because the family $\{U_\alpha:\alpha\in Q_n\}$ is 
discrete; hence there exists a second-countable group $H_n$, a continuous homomorphism 
$h_n\colon G\to H_n$, and a continuous function $g_n\colon H_n\to \mathbb R$ for which $\varphi_n=g_n\circ h_n$. Note 
that $S_n=\varphi_n^{-1}(\{1\})=h_n^{-1}(g_n^{-1}(\{1\}))$. 

We set 
$$
h=\bigdiag_{n\in \omega} h_n\colon G\to \prod_{n\in \omega}H_n
\quad \text{and}\quad
H = h(G).
$$
Since $F_\alpha=\bigcap\{S_n: \alpha\in Q_n\}$, it follows that $x\in F_\alpha$ if and only if $h_n(x)\in 
g_n^{-1}(1)$ whenever $\alpha\in Q_n$. Thus, 
$$
F_\alpha= h^{-1}\bigl(\{(x_n)_{n\in \omega}\in H: x_n\in g_n^{-1}(1)\text{ for all $n\in \omega$ such that $\alpha 
\in Q_n$}\}\bigr).
$$ 
It follows from the continuity and surjectivity of $h$ that $h(F_\alpha)$ is closed in $H$ and 
$h^{-1}(h(F_\alpha))= F_\alpha$. 
\end{proof}

\begin{proof}[Proof of Theorem~\ref{t4.3}] 
Let $G$ be an $\mathbb R$-factorizable group of uncountable pseudocharacter with identity element $e$. Suppose that 
$G$ is not pseudo-$\aleph_1$-compact and let $\{U_\alpha:\alpha<\omega_1\}$ be a discrete family of nonempty 
cozero sets. For each $\alpha<\omega_1$, we choose $x_\alpha\in U_\alpha$ and a zero set $Z_\alpha\subset 
U_\alpha$ so that 
\begin{enumerate}
\item[(i)]
$e\in Z_\alpha\subset x_\alpha^{-1}\cdot U_\alpha$; 
\item[(ii)]
$Z_\alpha\subset Z_\beta$ and $Z_\alpha\ne Z_\beta$ if $\beta<\alpha$.
\end{enumerate}
This can easily be done by transfinite recursion as follows. For $Z_0$ we take any zero set containing $e$ and 
contained in $x_0^{-1}\cdot U_0$. Assuming that $\alpha>0$ and $Z_\beta$ are defined for $\beta<\alpha$, 
we choose any zero set $F'_\alpha\subset U_\alpha$ containing $x_\alpha^{-1}$ and put $Z'_\alpha = 
(x_\alpha^{-1}\cdot F'_\alpha)\cap \bigcap_{\beta<\alpha}Z_\beta$. Since $Z'_\alpha$ is a $G_\delta$-set and the 
pseudocharacter of $G$ is uncountable, it follows that there exists an $x\in Z'_\alpha\setminus \{e\}$; for 
$Z_\alpha$ we take the intersection of $Z'_\alpha$ with any zero set containing $e$ and not containing~$x$. 

Let $F_\alpha=x_\alpha \cdot Z_\alpha$ for $\alpha<\omega_1$. Then the sets $U_\alpha$ and $F_\alpha$ satisfy the 
assumptions of Lemma~\ref{l4.1}. Hence there exists a second-countable group $H$ and a continuous 
homomorphism $h\colon G\to H$ such that 
\begin{enumerate}
\item[(i)]
all $h(F_\alpha)$ are closed and hence so are all 
$$
h(Z_\alpha)=h(x_\alpha^{-1}\cdot F_\alpha)= h(x_\alpha^{-1})\cdot h(F_\alpha);$$
\item[(ii)]
$h^{-1}(h(F_\alpha))= F_\alpha$.
\end{enumerate}
Let $N$ be the kernel of $h$. Then $F_\alpha=N\cdot F_\alpha$ and 
$$
h^{-1}(h(Z_\alpha))=h^{-1}(x_\alpha^{-1}\cdot F_\alpha)=N\cdot x_\alpha^{-1}\cdot F_\alpha = x_\alpha^{-1}\cdot 
F_\alpha = Z_\alpha.
$$ 
Therefore, $h(Z_\alpha)\ne h(Z_\beta)$ for $\alpha\ne \beta$. 

Thus, the subsets $h(Z_\alpha)$, $\alpha<\omega_1$, of $H$ are closed and strictly decrease. Their complements 
form an uncountable open cover containing no countable subcover in the second-countable space $H\setminus 
\bigcap_{\alpha<\omega_1}h(Z_\alpha)$. This contradiction shows that the family $\{U_\alpha: \alpha<\omega_1\}$ 
cannot exist and $G$ is pseudo-$\aleph_1$-compact.
\end{proof}

\begin{corollary}
\label{c4.3}
If $G$ and $H$ are topological groups at least one of which is of uncountable pseudocharacter and the group 
$G\times H$ is $\mathbb R$-factorizable, then both $G$ and $H$ are $\mathbb R$-factorizable and pseudo-$\aleph_1$-compact. 
\end{corollary}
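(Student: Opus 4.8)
The plan is to derive this corollary directly from Theorem~\ref{t4.3} together with Corollary~\ref{c4.3} is the statement itself — so instead I would deduce it from the already-established machinery. Wait, let me reconsider: the statement to prove is Corollary~\ref{c4.3}, and the tools available are Theorem~\ref{t4.3} (any $\mathbb R$-factorizable group of uncountable pseudocharacter is pseudo-$\aleph_1$-compact) and Theorem~\ref{t3.1} (for groups, $G\times H$ is $\mathbb R$-factorizable iff both $G,H$ are $\mathbb R$-factorizable and the product is multiplicatively $\mathbb R$-factorizable), plus Theorem~\ref{t4.2} and Theorem~\ref{t2.1}.

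First I would invoke Theorem~\ref{t3.1}: since $G\times H$ is $\mathbb R$-factorizable, both $G$ and $H$ are $\mathbb R$-factorizable groups (this is the $(1)\Rightarrow(2)$ direction, which only uses that the coordinate projections are open homomorphisms and Theorem~\ref{t1.2}). That disposes of half of each conclusion. Next, the pseudocharacter: say $\psi(G)>\omega$. Then $G$, being $\mathbb R$-factorizable of uncountable pseudocharacter, is pseudo-$\aleph_1$-compact by Theorem~\ref{t4.3}. The remaining issue is to transfer pseudo-$\aleph_1$-compactness to $H$ (and to cover the symmetric case). Here I would argue by contradiction: suppose $H$ is not pseudo-$\aleph_1$-compact. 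Since $G\times H$ is $\mathbb R$-factorizable, Theorem~\ref{tB} (or Theorem~\ref{t3.1}) gives that the product $G\times H$ is multiplicatively $\mathbb R$-factorizable, so Theorem~\ref{t2.1} applies with $Y=H$: it forces $G^\omega$ — and in particular $G$ — to be hereditarily Lindel\"of, hence Lindel\"of, hence pseudo-$\aleph_1$-compact; but a Lindel\"of space of uncountable pseudocharacter is perfectly fine, so I do not get a contradiction from that alone. Instead I should swap roles: apply Theorem~\ref{t2.1} with the non-pseudo-$\aleph_1$-compact factor in the position of $Y$. If $H$ is not pseudo-$\aleph_1$-compact, Theorem~\ref{t2.1} says $G^\omega$ is hereditarily Lindel\"of and hereditarily separable; a hereditarily Lindel\"of group has countable pseudocharacter (points are $G_\delta$), contradicting $\psi(G)>\omega$. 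So $H$ must be pseudo-$\aleph_1$-compact. By symmetry, if instead $\psi(H)>\omega$ we conclude $G$ is pseudo-$\aleph_1$-compact the same way, and then $H$ is pseudo-$\aleph_1$-compact by Theorem~\ref{t4.3} applied to $H$ itself.

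To finish, I need both $G$ and $H$ pseudo-$\aleph_1$-compact simultaneously. Suppose, toward a contradiction, that one of them — say $G$ — fails to be pseudo-$\aleph_1$-compact. By the hypothesis, the other factor or $G$ itself has uncountable pseudocharacter; in either case Theorem~\ref{t2.1} (with $Y=G$) tells us the complementary factor power is hereditarily Lindel\"of and hereditarily separable, but we also know at least one of $G,H$ has uncountable pseudocharacter, and a hereditarily Lindel\"of space cannot: points in a regular hereditarily Lindel\"of space are $G_\delta$. Thus whichever factor was assumed to have uncountable pseudocharacter cannot be forced hereditarily Lindel\"of, while the non-pseudo-$\aleph_1$-compactness of $G$ via Theorem~\ref{t2.1} forces exactly that — contradiction. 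Hence both $G$ and $H$ are pseudo-$\aleph_1$-compact, completing the proof.

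The main obstacle is bookkeeping the asymmetry: Theorem~\ref{t2.1} gives strong conclusions about the factor \emph{not} assumed to be weird, so one must carefully choose which factor to place in the role of $Y$ (the non-pseudo-$\aleph_1$-compact one) and which in the role of $X$ (the one with uncountable pseudocharacter), and then use the clash between ``hereditarily Lindel\"of'' and ``uncountable pseudocharacter.'' Once that pairing is set up correctly the rest is a direct citation chain through Theorems~\ref{t3.1}, \ref{t4.3}, and~\ref{t2.1}.
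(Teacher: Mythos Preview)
Your argument is correct but takes a more circuitous route than the paper's. The paper simply observes that if one factor has uncountable pseudocharacter then so does $G\times H$ (since $\psi(G\times H)\ge\max\{\psi(G),\psi(H)\}$), applies Theorem~\ref{t4.3} directly to the $\mathbb R$-factorizable group $G\times H$ to conclude that $G\times H$ itself is pseudo-$\aleph_1$-compact, and then pushes this down to $G$ and $H$ via the open projection homomorphisms (with $\mathbb R$-factorizability following from Theorem~\ref{t1.2}). You instead apply Theorem~\ref{t4.3} only to the single factor of uncountable pseudocharacter and then bring in the heavier Theorem~\ref{t2.1} to handle the other factor by contradiction: non-pseudo-$\aleph_1$-compactness of $H$ would force $G$ to be hereditarily Lindel\"of and hence of countable pseudocharacter. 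Both approaches are valid, but the paper's avoids Theorem~\ref{t2.1} entirely and is essentially two lines; note also that your final paragraph is redundant, since the case analysis preceding it already delivers both conclusions.
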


\begin{proof}
According to Theorem~\ref{t4.3}, the group $G\times H$ is pseudo-$\aleph_1$-compact; hence so are its 
images $G$ and $H$ under continuous open projection homomorphisms. By Theorem~\ref{t1.2} 
they are also $\mathbb R$-factorizable. 
\end{proof}

It follows from Theorem~\ref{t1.1} that every nonmetrizable compact group has 
uncountable pseudocharacter, because compact spaces do not admit strictly coarser Hausdorff topologies. 
Therefore, Corollary~\ref{c4.3} implies the following theorem of~\cite{AT}.

\begin{corollary}[{\cite[Theorem~8.5.11]{AT}}]
\label{c4.4}
If $G$ is a topological group, $H$ is a nonmetrizable compact topological group, and the group $G\times H$ is 
$\mathbb R$-factorizable, then $G$ is pseudo-$\aleph_1$-compact. 
\end{corollary}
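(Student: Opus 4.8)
The plan is to reduce the statement to Corollary~\ref{c4.3} by showing that a nonmetrizable compact group necessarily has uncountable pseudocharacter. Once this is done, applying Corollary~\ref{c4.3} with the given nonmetrizable compact group in the role of one of the two factors: since $G\times H$ is $\mathbb R$-factorizable by hypothesis and $\psi(H)>\omega$, Corollary~\ref{c4.3} immediately yields that both $G$ and $H$ are $\mathbb R$-factorizable and pseudo-$\aleph_1$-compact; in particular $G$ is pseudo-$\aleph_1$-compact, which is exactly the conclusion sought.

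So the only point to verify is that $\psi(H)>\omega$. First I would invoke Theorem~\ref{t1.1}, which asserts that a topological group is submetrizable if and only if it has countable pseudocharacter; hence it suffices to show that $H$ is \emph{not} submetrizable. Suppose it were, i.e.\ the compact group topology $\tau$ on $H$ admitted a strictly coarser (or at least coarser) metrizable topology $\tau'$. Then the identity map $(H,\tau)\to(H,\tau')$ is a continuous bijection from a compact Hausdorff space onto a Hausdorff space, hence a homeomorphism; therefore $\tau=\tau'$ and $H$ is metrizable, contradicting the hypothesis. Consequently $H$ is not submetrizable, and Theorem~\ref{t1.1} gives $\psi(H)>\omega$.

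I do not expect any genuine obstacle here. The only mildly nontrivial ingredient is the classical topological fact that a compact Hausdorff topology is minimal among Hausdorff topologies — equivalently, that a continuous bijection from a compact space to a Hausdorff space is a homeomorphism — which is precisely what converts ``nonmetrizable'' into ``uncountable pseudocharacter.'' Everything else is a direct appeal to results already established, namely Theorem~\ref{t1.1} and Corollary~\ref{c4.3}.
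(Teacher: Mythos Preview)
Your proof is correct and follows exactly the paper's own argument: the paper observes (in the sentence immediately preceding the corollary) that a nonmetrizable compact group has uncountable pseudocharacter by Theorem~\ref{t1.1} together with the minimality of compact Hausdorff topologies, and then invokes Corollary~\ref{c4.3}. There is nothing to add.
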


\begin{corollary}
If an $\mathbb R$-factorizable group $G$ contains a nonmetrizable compact subspace, then $G$ is 
pseudo-$\aleph_1$-compact.
\end{corollary}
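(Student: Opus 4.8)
The plan is to deduce this from Theorem~\ref{t4.3} by contraposition. Assume $G$ is $\mathbb R$-factorizable and contains a nonmetrizable compact subspace $K$, and suppose, toward a contradiction, that $G$ is not pseudo-$\aleph_1$-compact.

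Since $G$ is $\mathbb R$-factorizable but not pseudo-$\aleph_1$-compact, Theorem~\ref{t4.3} forbids $G$ from having uncountable pseudocharacter, so $\psi(G)\le\omega$; by Theorem~\ref{t1.1} the group $G$ is then submetrizable --- this is precisely the content of Corollary~\ref{c4.2}. Fix a metrizable topology $\tau'$ on $G$ coarser than the group topology $\tau$. The identity map carries $(K,\tau|_K)$ continuously and bijectively onto $(K,\tau'|_K)$; the domain is compact and the range is Hausdorff, so this map is a homeomorphism. Hence $K$, with the subspace topology it inherits from $G$, is homeomorphic to a subspace of the metrizable space $(G,\tau')$ and is therefore metrizable, contradicting the choice of $K$. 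This contradiction shows that $G$ must be pseudo-$\aleph_1$-compact.

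There is no genuinely hard step here; the two points to be careful about are applying Theorem~\ref{t4.3} in its contrapositive form and invoking the elementary fact that a continuous bijection from a compact space onto a Hausdorff space is a homeomorphism. Equivalently, one can run the argument directly: a space containing a nonmetrizable compact subspace admits no coarser metrizable topology, hence is not submetrizable, so by Corollary~\ref{c4.2} it cannot be a weird $\mathbb R$-factorizable group; thus an $\mathbb R$-factorizable group possessing such a subspace is automatically pseudo-$\aleph_1$-compact.
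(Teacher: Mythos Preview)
Your proof is correct and follows essentially the same route as the paper's: assume $G$ is not pseudo-$\aleph_1$-compact, invoke Corollary~\ref{c4.2} (which you unpack via Theorem~\ref{t4.3} and Theorem~\ref{t1.1}) to get submetrizability, and then use the compact-to-Hausdorff rigidity to force $K$ to be metrizable. The paper's version is terser but logically identical.
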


\begin{proof}
Let $K$ be a compact subspace of $G$. If $G$ is not pseudo-$\aleph_1$-compact, then by 
Corollary~\ref{c4.2} it is submetrizable and hence so is $K$. Since compact spaces do not admit 
strictly coarser Hausdorff topologies, it follows that $K$ is metrizable. 
\end{proof}

\subsection{$\mathbb R$-Factorizable Groups of Regular Uncountable Weight}

It follows from Theorems~8.5.2 and~8.5.8 of \cite{AT} that any $\mathbb R$-factorizable group $G$ with 
$w(G)^\omega<2^{\omega_1}$ is pseudo-$\aleph_1$-compact. Therefore, under the assumption $2^\omega<2^{\omega_1}$, 
any $\mathbb R$-factorizable group of weight $\omega_1$ is pseudo-$\aleph_1$-compact. In this section, we show that 
this assumption can be removed. Moreover, we prove that any $\mathbb R$-factorizable group of regular uncountable 
weight $\kappa$ is pseudo-$\kappa$-compact. 

We begin with a simple observation.

\begin{remark}
\label{r4.1}
Any $\mathbb R$-factorizable group $G$ of weight $\kappa$ embeds in a 
product of $\kappa$-many second-countable groups as a subgroup. 

Indeed, since $G$ is Tychonoff, its topology 
has a base $\{B_\alpha: \alpha<\kappa\}$ consisting of cozero sets. Continuous functions $f_\alpha\colon 
G\to \mathbb R$ witnessing that the $B_\alpha$ are cozero separate points from closed sets, and each $f_\alpha$ 
factors through a continuous homomorphism $h_\alpha\colon G\to H_\alpha$ to a second-countable group $H_\alpha$. 
Clearly, the homomorphisms $h_\alpha$ separate points from closed sets as well, so that the diagonal 
$\bigdiag_{\alpha< \kappa} h_\alpha\colon G\to \prod_{\alpha<\kappa}H_\alpha$ is a topological 
isomorphic embedding. 
\end{remark}

\begin{theorem}
\label{t4.4}
Any $\mathbb R$-factorizable group of regular uncountable weight $\kappa$ is 
pseudo-$\kappa$-compact. \end{theorem}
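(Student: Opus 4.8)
The plan is to mimic the argument for Theorem~\ref{t4.3}, replacing $\omega_1$ by a regular uncountable cardinal $\kappa$ throughout and supplying a $\kappa$-sized analogue of Lemma~\ref{l4.1}. Suppose $G$ is $\mathbb R$-factorizable of regular uncountable weight $\kappa$ but not pseudo-$\kappa$-compact, so there is a discrete family $\{U_\alpha:\alpha<\kappa\}$ of nonempty cozero sets. First I would invoke Remark~\ref{r4.1}: $G$ embeds as a subgroup in a product $\prod_{\gamma<\kappa}H_\gamma$ of second-countable groups. The point of this is that any subgroup of such a product has \emph{$G_\kappa$-diagonal-type} behavior: for each $\alpha$ one can find, using at most $\kappa$ of the coordinate homomorphisms, a continuous homomorphism whose kernel misses a prescribed point. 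Concretely, by transfinite recursion on $\alpha<\kappa$ I would build a strictly decreasing chain of zero sets $Z_\alpha$ with $e\in Z_\alpha\subset x_\alpha^{-1}U_\alpha$, exactly as in the proof of Theorem~\ref{t4.3}; at stage $\alpha$ the set $Z'_\alpha=(x_\alpha^{-1}F'_\alpha)\cap\bigcap_{\beta<\alpha}Z_\beta$ is an intersection of fewer than $\kappa$ zero sets, and since $G$ embeds in a product of $\kappa$-many second-countable groups, such an intersection of size $<\kappa$ cannot be $\{e\}$ — here is where regularity of $\kappa$ enters, guaranteeing that fewer than $\kappa$ coordinates do not yet separate $e$ from everything. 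Thus $Z'_\alpha\setminus\{e\}\ne\varnothing$ and we can shrink $Z_\alpha$ to be properly smaller than all earlier ones.

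The second ingredient is the $\kappa$-version of Lemma~\ref{l4.1}: given a discrete family $\{U_\alpha:\alpha<\kappa\}$ of cozero sets and nonempty zero sets $F_\alpha\subset U_\alpha$, there is a second-countable group $H$ and a continuous homomorphism $h\colon G\to H$ with $h(F_\alpha)$ closed and $h^{-1}(h(F_\alpha))=F_\alpha$ for all $\alpha<\kappa$. The proof is identical to that of Lemma~\ref{l4.1}: pick sets $Q_n\subset\kappa$, $n\in\omega$, forming a countable base for a second-countable topology on $\kappa$ (such topologies exist for every infinite cardinal, e.g.\ via an injection into the Cantor set), so that $\bigcap\{Q_n:\alpha\in Q_n\}=\{\alpha\}$; set $S_n=\bigcup\{F_\alpha:\alpha\in Q_n\}$ and $\varphi_n=\sum_{\alpha\in Q_n}f_\alpha$, which is well defined and continuous because $\{U_\alpha:\alpha\in Q_n\}$ is discrete; factor each $\varphi_n$ through a second-countable group $H_n$; and take $h=\bigdiag_{n\in\omega}h_n$ with $H=h(G)$. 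The disjointness of the $F_\alpha$ gives $F_\alpha=\bigcap\{S_n:\alpha\in Q_n\}$, and the rest goes through verbatim. Note this lemma has \emph{no} cardinality restriction on the index set — only countably many functions $\varphi_n$ are ever formed — so the jump from $\omega_1$ to $\kappa$ costs nothing here.

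Finally I would combine the two: with $F_\alpha=x_\alpha Z_\alpha$, apply the $\kappa$-analogue of Lemma~\ref{l4.1} to get $h\colon G\to H$ into a second-countable group with $h(F_\alpha)$ closed and $h^{-1}(h(F_\alpha))=F_\alpha$. Letting $N=\ker h$, we get $N F_\alpha=F_\alpha$, hence $h^{-1}(h(Z_\alpha))=N x_\alpha^{-1}F_\alpha=x_\alpha^{-1}F_\alpha=Z_\alpha$, so the $h(Z_\alpha)$ are closed and \emph{strictly} decreasing (since the $Z_\alpha$ are). Then $\{H\setminus h(Z_\alpha):\alpha<\kappa\}$ is an open cover of $H\setminus\bigcap_{\alpha<\kappa}h(Z_\alpha)$ with no subcover of size $<\kappa$ — again using that $\kappa$ is regular so the strictly decreasing chain has no cofinal subchain of smaller size — contradicting the hereditary Lindel\"ofness (indeed second-countability) of $H$. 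Hence $G$ is pseudo-$\kappa$-compact.

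\textbf{Main obstacle.} I expect the delicate point to be the recursive construction of the strictly decreasing chain $Z_\alpha$: one must verify that $\bigcap_{\beta<\alpha}Z_\beta$ is still ``large'' at every stage $\alpha<\kappa$, i.e.\ that an intersection of $<\kappa$ zero sets in $G$ is not a single point. This is where regularity of $\kappa$ is essential, and it is why the argument needs $w(G)=\kappa$ (so that the embedding into a product of exactly $\kappa$ second-countable groups is available): fewer than $\kappa$ coordinates of that product cannot separate $e$ from every other point, because they generate a subspace of weight $<\kappa$ while $G$ has weight $\kappa$. The $\mathbb R$-factorizability is then used twice — once implicitly through Remark~\ref{r4.1} to obtain the embedding, and once through the Lemma to collapse everything to a second-countable group where the non-existence of a strictly decreasing $\kappa$-chain of closed sets delivers the contradiction.
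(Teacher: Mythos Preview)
Your recursive construction of the strictly decreasing chain $(Z_\alpha)_{\alpha<\kappa}$ has a fatal gap. You claim that $Z'_\alpha=(x_\alpha^{-1}F'_\alpha)\cap\bigcap_{\beta<\alpha}Z_\beta$ cannot be $\{e\}$ because ``fewer than $\kappa$ coordinates of that product cannot separate $e$ from every other point, because they generate a subspace of weight $<\kappa$ while $G$ has weight $\kappa$.'' This reasoning is wrong on two counts. First, the $Z_\beta$ are arbitrary zero sets in $G$, not preimages of coordinate projections, so a bound on how many coordinates are needed to separate points says nothing about $\bigcap_{\beta<\alpha}Z_\beta$. Second, and more fundamentally, a continuous injective homomorphism from $G$ into a group of weight $<\kappa$ is perfectly compatible with $w(G)=\kappa$; continuous injections do not transfer weight. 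In fact the situation is decisive against you: failure of pseudo-$\kappa$-compactness (for uncountable $\kappa$) entails failure of pseudo-$\aleph_1$-compactness, so by Theorem~\ref{t4.3} (equivalently Corollary~\ref{c4.2}) we must have $\psi(G)=\omega$. Hence $\{e\}$ \emph{is} a countable intersection of zero sets, and your recursion can collapse to $\{e\}$ already at a countable stage. Theorem~\ref{t4.3} is therefore complementary to, not a template for, Theorem~\ref{t4.4}: the two regimes $\psi(G)>\omega$ and $\psi(G)=\omega$ require genuinely different arguments.

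There is also a secondary problem with your $\kappa$-version of Lemma~\ref{l4.1}: a countable family $\{Q_n:n\in\omega\}$ of subsets of $\kappa$ with $\bigcap\{Q_n:\alpha\in Q_n\}=\{\alpha\}$ exists if and only if $\kappa\le 2^\omega$ (such a family is exactly an injection $\kappa\hookrightarrow 2^\omega$), so for $\kappa>2^\omega$ the lemma fails outright. The paper's proof avoids both obstacles by an entirely different route: rather than a descending chain, it uses the $C$-embedded discrete set $Y=\{y_\alpha:\alpha<\kappa\}$, the $z$-embedding of $G$ in $\prod_{\gamma<\kappa}H_\gamma$, and the fact that continuous real-valued functions on such a product depend on countably many coordinates, to show that \emph{every} subset of $Y$ is a countable union of traces $P_\alpha=B_\alpha\cap Y$ of the $\kappa$-many standard basic open sets; a short combinatorial argument (Lemma~\ref{l4.3}) then builds a subset of $\kappa$ that is not a union of fewer than $\kappa$ of the $P_\alpha$, giving the contradiction.
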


\begin{corollary}
If $G$ is a weird $\mathbb R$-factorizable group, then $w(G)>\omega_1$. 
\end{corollary}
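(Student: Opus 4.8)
The plan is to read this off immediately from Theorem~\ref{t4.4}. Suppose, towards a contradiction, that $G$ is a weird $\mathbb R$-factorizable group with $w(G)\le\omega_1$. By definition $G$ is $\mathbb R$-factorizable but \emph{not} pseudo-$\aleph_1$-compact, so it suffices to show that the bound $w(G)\le\omega_1$ forces $G$ to be pseudo-$\aleph_1$-compact, which is the desired contradiction.

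I would split into the two cases dictated by whether the weight is countable. If $w(G)\le\omega$, then $G$ is second-countable, hence Lindel\"of, and every Lindel\"of space is pseudo-$\aleph_1$-compact (a locally finite — equivalently, discrete — family of nonempty open sets in a Lindel\"of space must be countable), contradicting weirdness. If $w(G)=\omega_1$, then $G$ has regular uncountable weight $\kappa=\omega_1$, so Theorem~\ref{t4.4} applies and shows that $G$ is pseudo-$\omega_1$-compact; but pseudo-$\omega_1$-compactness is exactly pseudo-$\aleph_1$-compactness (a locally finite family of open sets has cardinality $<\omega_1$, i.e.\ is at most countable), again contradicting weirdness. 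In either case we reach a contradiction, so $w(G)>\omega_1$.

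There is no real obstacle here — all the substance is already contained in Theorem~\ref{t4.4}. The only point worth a moment's care is that Theorem~\ref{t4.4} is stated only for uncountable $\kappa$, so the case $w(G)\le\omega$ has to be handled on its own; this is entirely routine, since second-countable (indeed Lindel\"of) groups are patently pseudo-$\aleph_1$-compact.
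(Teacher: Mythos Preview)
Your proof is correct and is exactly the argument the paper has in mind: the corollary is stated immediately after Theorem~\ref{t4.4} with no separate proof, and your case split (second-countable versus weight $\omega_1$) together with the identification of pseudo-$\omega_1$-compactness with pseudo-$\aleph_1$-compactness is precisely how it is meant to be read off. Your remark that the countable-weight case must be handled separately because Theorem~\ref{t4.4} requires $\kappa$ uncountable is a good point of care, and the handling is indeed routine.
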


\begin{proof}[Proof of Theorem~\ref{t4.4}]
Suppose that $\kappa$ is a regular uncountable cardinal and an $\mathbb R$-factorizable group $G$ of weight $\kappa$ is 
not pseudo-$\kappa$-compact, i.e., contains a discrete family $\{U_\alpha:\alpha<\kappa\}$ of nonempty cozero 
sets. Take $y_\alpha\in U_\alpha$ for each $\alpha\in \kappa$ and let $Y=\{y_\alpha:\alpha<\kappa\}$. The set $Y$ 
is $C$-embedded in $G$ (by Remark~\ref{r1.3}) and $G$ is $z$-embedded in a product 
$\prod_{\alpha<\kappa}H_\alpha$ of second-countable groups $H_\alpha$ (by Remark~\ref{r4.1} and 
Theorem~8.2.7 of \cite{AT}); hence every set $P\subset Y$ is cozero in $\prod_{\alpha<\kappa}H_\alpha$ (because 
any such set is cozero in the discrete space $Y$). 

Choose a countable base $\mathscr B_\alpha$ of the topology of $H_\alpha$ for each $\alpha<\kappa$. 
Recall that the standard base $\mathscr B$ of the topology of $\prod_{\alpha<\kappa}H_\alpha$ consists of sets 
of the form $\prod_{\alpha<\kappa} U_\alpha$, where $U_\alpha=H_\alpha$ for all but finitely many 
$\alpha<\kappa$ and $U_\alpha\in \mathscr B_\alpha$ for the remaining $\alpha<\kappa$. Clearly, $|\mathscr 
B|\le \kappa$; since $w(G)=\kappa$, it follows that $|\mathscr B|= \kappa$. Let us index the elements 
of $\mathscr B$ by ordinals: $\mathscr B=\{B_\alpha:\alpha<\kappa\}$. For each $\alpha<\kappa$, we set 
$P_\alpha = B_\alpha\cap Y$.

\begin{lemma}
\label{l4.2}
For any $M\subset Y$, there exists a countable set $C\subset \kappa$ such that $M=\bigcup_{\alpha\in 
C}P_\alpha$.
\end{lemma}

\begin{proof}
Let $M\subset Y$. There exists a continuous function $f\colon 
\prod_{\alpha<\kappa}H_\alpha\to \mathbb R$ such that $M=f^{-1}(\mathbb R\setminus \{0\})\cap Y$. It is well known that any 
real-valued continuous function on a product of separable spaces depends on only countably many coordinates 
(see, e.g., \cite{Ross-Stone}). This means that there exists a countable set $A\subset \kappa$ and a continuous 
function $g\colon \prod_{\alpha\in A}H_\alpha \to \mathbb R$ for which $f=g\circ\pi_A$ (we use the standard notation 
$\pi_A$ for the projection $\prod_{\alpha\in\kappa}H_\alpha \to \prod_{\alpha\in A}H_\alpha$). Thus, 
$$ 
f^{-1}(\mathbb R\setminus \{0\})= g^{-1}(\mathbb R\setminus \{0\})\times \prod_{\alpha\in\kappa\setminus A}H_\alpha. 
$$ 
The open set $g^{-1}(\mathbb R\setminus \{0\})$ in the countable product $\prod_{\alpha\in A}H_\alpha$ is a countable 
union of elements of the standard base of this product. Clearly, if $U$ is any such element, then $U\times 
\prod_{\alpha\in\kappa\setminus A}H_\alpha$ is an element of the standard base for the product 
$\prod_{\alpha\in\kappa}H_\alpha$, i.e., $U=B_\alpha$ for some $\alpha<\kappa$. This immediately implies 
the required assertion. 
\end{proof} 

In what follows, we identify $Y$ with $\kappa$; this can be done, e.g., by means of the bijection 
$y_\alpha\mapsto \alpha$. 

Thus, if $G$ is not pseudo-$\kappa$-compact, then there must exist sets $P_\alpha\subset \kappa$, 
$\alpha<\kappa$, such that any $A\subset \kappa$ is the union of fewer than $\kappa$ of them. Our goal is to show 
that this is impossible. 

\begin{lemma}
\label{l4.3}
Suppose that sets $P_\alpha\subset \kappa$, $\alpha<\kappa$, are such that, for every 
$\alpha<\kappa$, there exist ordinals $x$ and $y$ and a set $M\subset [x, y]$
 satisfying the following conditions:
\begin{enumerate}
\item[\textup{(i)}]
$\alpha<x<y<\kappa$\textup;
\item[\textup{(ii)}]
for any $C\subset \alpha$, 
$$
M\ne \bigcup_{\beta\in C}P_\beta\cap [x, y].
$$
\end{enumerate} 
Then there exists a set $M\subset \kappa$ which is not the union of fewer than $\kappa$ sets~$P_\alpha$.
\end{lemma}

\begin{proof}
We recursively define ordinals $x_\alpha,y_\alpha<\kappa$ and sets 
$M_\alpha\subset [x_\alpha,y_\alpha]$ so that 
\begin{enumerate}
\item[\textup{(i)}]
$\beta < x_\beta<y_\beta<x_\alpha<y_\alpha$ whenever $\beta<\alpha<\kappa$;
\item[\textup{(ii)}]
for any $C\subset \sup_{\beta<\alpha}x_\beta$ (in particular, for any $C\subset \alpha$), 
$$
M_\alpha\ne \bigcup_{\beta\in C}P_\beta\cap [x_\alpha, y_\alpha].
$$
\end{enumerate} 
The set $M=\bigcup_{\alpha<\kappa}M_\alpha$ is as required. Indeed, suppose that $C\subset \kappa$, $|C|<\kappa$  
and $M=\bigcup_{\beta\in C}P_\beta$. Then $C\subset \alpha$ for some $\alpha<\kappa$ (because $\kappa$ is 
regular). Clearly, $M\cap [x_\alpha, y_\alpha]=M_\alpha$, whence $M_\alpha = \bigcup_{\beta\in C}P_\beta\cap 
[x_\alpha, y_\alpha]$. This contradiction proves what we need. 
\end{proof}

It remains to prove the existence of $x$, $y$ and $M$ satisfying the conditions in Lemma~\ref{l4.3}. Let 
$\alpha<\kappa$. If $\bigcup_{\beta<\alpha} P_\beta\subset \alpha+1$, then we set $x=\alpha$, $y=\alpha+2$, and 
$M=\{\alpha+1\}$. Otherwise we set  $A = \{\beta<\alpha: P_\beta\setminus (\alpha + 1)\ne \varnothing\}$, 
$\gamma=\sup_{\beta\in A}\min(P_\beta\setminus (\alpha+1))$, $x=\alpha$, $y= \gamma+2$, and $M=\gamma+1$. For 
each $\beta<\alpha$, the intersection $C_\beta\cap [x,y]$ either is empty or contains an ordinal smaller than 
$\gamma+1$; therefore, $M$ cannot be represented as a union of such intersections. In view of  
Lemmas~\ref{l4.2} and \ref{l4.3} the group $G$ is pseudo-$\kappa$-compact. 
\end{proof}

\end{document}